\newtheorem{thm}{Theorem}[section]
\newtheorem{prop}[thm]{Proposition}
\newtheorem{lemma}[thm]{Lemma}
\theoremstyle{definition}
\newtheorem*{ack}{\textbf{Ackowledgments}}
\newtheorem*{conv}{\textbf{Conventions}}
\newtheorem{prob}[thm]{Problem}
\theoremstyle{remark}
\newtheorem{rmk}[thm]{Remark}
\newtheorem*{rmk*}{Remark}
\newcommand{\Img}{\mathrm{Im}}
\newcommand{\Aff}{\mathbb{A}}
\newcommand{\PP}{\mathbb{P}}
\newcommand{\CC}{\mathbb{C}}
\newcommand{\QQ}{\mathbb{Q}}
\newcommand{\ZZ}{\mathbb{Z}}
\newcommand{\RR}{\mathbb{R}}
\newcommand{\FF}{\mathbb{F}}
\newcommand{\cO}{\mathcal{O}}
\newcommand{\cM}{\mathcal{M}}
\newcommand{\cF}{\mathcal{F}}
\newcommand{\cP}{\mathcal{P}}
\newcommand{\cI}{\mathcal{I}}
\newcommand{\NS}{\mathrm{NS}}
\newcommand{\Pic}{\mathrm{Pic}}
\newcommand{\rk}{\mathrm{rk}}
\newcommand{\Or}{\mathrm{O}}
\newcommand{\divi}{\mathrm{div}}
\newcommand{\id}{\mathrm{id}}
\title{The Kodaira dimension of some moduli spaces of elliptic K3 surfaces}
\author{Mauro Fortuna}
\address{Institut für Algebraische Geometrie, Leibniz Universität Hannover, Welfengarten 1, 30167 Hannover, Germany.}
\email{fortuna@math.uni-hannover.de}
\author{Giacomo Mezzedimi}
\address{Institut für Algebraische Geometrie, Leibniz Universität Hannover, Welfengarten 1, 30167 Hannover, Germany.}
\email{mezzedimi@math.uni-hannover.de}
\subjclass[2020]{Primary 14J15, 14J28, 14J27, 14M20, Secondary 32M15, 32N15}
\begin{document}

\maketitle

\begin{abstract}
We study the moduli spaces of elliptic K3 surfaces of Picard number at least 3, i.e.\ $U\oplus \langle -2k \rangle$-polarized K3 surfaces. Such moduli spaces are proved to be of general type for $k\geq 220$. The proof relies on the low-weight cusp form trick developed by Gritsenko, Hulek and Sankaran. Furthermore, explicit geometric constructions of some elliptic K3 surfaces lead to the unirationality of these moduli spaces for $k < 11$ and for 19 other isolated values up to $k=64$.
\end{abstract}

\section*{Introduction}

Moduli spaces of complex K3 surfaces are a fundamental topic of interest in algebraic geometry. One of the first geometric properties one wants to understand is their Kodaira dimension. Towards this direction, the seminal work \cite{GHS07} of Gritsenko, Hulek and Sankaran proved that the moduli space $\cF_{2d}$ of polarized K3 surfaces of degree $2d$ is of general type for $d>61$ and for other smaller values of $d$. It is then natural to address the general question about the Kodaira dimension of moduli spaces of lattice polarized K3 surfaces. We are interested in studying a particular class of such surfaces, namely elliptic K3 surfaces of Picard number at least 3.

A K3 surface $X$ is called elliptic if it admits a fibration $X\rightarrow \PP^1$ in curves of genus one together with a section. The classes of the fiber and the zero section in the N\'eron-Severi group generate a lattice isomorphic to the hyperbolic plane $U$, and they span the whole N\'eron-Severi group if the elliptic K3 surface is very general. The geometry of elliptic surfaces can be studied via their realization as Weierstrass fibrations. By using this description,  Miranda \cite{Mir81} constructed the moduli space of elliptic K3 surfaces and showed its unirationality as a by-product. Later, Lejarraga \cite{Lej93} proved that this space is actually rational. We want to study the divisors of the moduli space of elliptic K3 surfaces which parametrize the surfaces whose Néron-Severi groups contain primitively $U\oplus \langle -2k \rangle$. These are the moduli spaces $\cM_{2k}$ of $U\oplus \langle -2k\rangle$-polarized K3 surfaces. Geometrically we are considering elliptic K3 surfaces admitting an extra class in the N\'eron-Severi group: if $k=1$, it comes from a reducible fiber of the elliptic fibration, while if $k\ge 2$ it is represented by an extra section, intersecting the zero section in $k-2$ points with multiplicity (cf. Remark \ref{rmk:weiestrass}).

In the present article, we aim at computing the Kodaira dimension of the moduli spaces $\cM_{2k}$.

\begin{thm}\label{thm:main}
The moduli space $\cM_{2k}$ is of general type for $k\geq 220$, or
$$ k \ge 208, \  k\ne 211,219, \text{ or }  k\in \{170,185,186,188,190,194,200,202,204,206\}. $$

Moreover, the Kodaira dimension of $\cM_{2k}$ is non-negative for $k\geq 176$, or
$$ k\ge 164,  \ k \ne 169,171,175 \text{ or } k\in \{140,146,150,152,154,155,158,160,162\}.$$
\end{thm}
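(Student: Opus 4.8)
The plan is to realize $\cM_{2k}$ as a locally symmetric variety and apply the low-weight cusp form trick of Gritsenko--Hulek--Sankaran. Since the polarizing lattice $M=U\oplus\langle -2k\rangle$ contains a copy of $U$, its primitive embedding into the K3 lattice $\Lambda_{K3}=U^3\oplus E_8(-1)^2$ is unique up to isometry, and one obtains an isomorphism $\cM_{2k}\cong \mathcal{D}_L/\widetilde{\Or}^+(L)$, where
\[
L=M^{\perp}_{\Lambda_{K3}}=U\oplus\langle 2k\rangle\oplus E_8(-1)^2
\]
has signature $(2,17)$ and $\mathcal{D}_L$ is the associated $17$-dimensional type IV domain. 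Thus $n:=\dim\cM_{2k}=17\geq 9$, so by the Reid--Tai analysis of Gritsenko--Hulek--Sankaran the quotient singularities impose no adjunction conditions in this range. In this setup, cuspidal modular forms of weight $mn$ for $\widetilde{\Or}^+(L)$ that vanish to order $\geq m$ along the ramification divisor of $\pi\colon \mathcal{D}_L\to\cM_{2k}$ descend to sections of $K^{\otimes m}$ on a smooth model, so it suffices to produce suitable cusp forms of controlled weight.

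Next I would invoke the cusp form trick in its two guises. Given a non-zero cusp form of weight $a<n$ vanishing along the ramification divisor, multiplying its $m$-th power by the full space of weight $m(n-a)$ modular forms produces $\sim m^{n}$ independent $m$-canonical forms (using Hirzebruch--Mumford growth for the latter space), which proves $\cM_{2k}$ is of general type. If instead one only finds such a cusp form of weight exactly $n$, it already defines a non-trivial section of $K$, giving $\kappa(\cM_{2k})\geq 0$. To build these forms I would use the quasi-pullback of the Borcherds form $\Phi_{12}$ on $\mathrm{II}_{2,26}$: fixing a primitive embedding $L\hookrightarrow\mathrm{II}_{2,26}$, equivalently an embedding of $\langle 2k\rangle$ into $U\oplus E_8(-1)$, the orthogonal complement $K:=L^{\perp}_{\mathrm{II}_{2,26}}$ is negative definite of rank $9$. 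If $K$ contains $N\geq 1$ pairs $\pm r$ of roots (vectors with $r^2=-2$), the quasi-pullback is a non-zero cusp form of weight $12+N$ vanishing along every $(-2)$-divisor of $\mathcal{D}_L$. Hence $\cM_{2k}$ is of general type as soon as $12+N<17$, i.e.\ $N\leq 4$, and $\kappa(\cM_{2k})\geq 0$ as soon as $N\leq 5$.

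The heart of the proof, and the step I expect to be hardest, is the lattice-theoretic problem of realizing these root bounds: for a given $k$ I must exhibit a vector $v$ of norm $2k$ in $U\oplus E_8(-1)$ whose orthogonal complement $v^{\perp}$ contains at most $8$ (respectively $10$) roots. Writing $v=e+cf+w$ with $w\in E_8(-1)$, the roots of $v^{\perp}$ are exactly the $(-2)$-vectors of $U\oplus E_8(-1)$ orthogonal to $v$, so minimizing their number amounts to choosing $w$ so that few roots of $E_8$ survive the orthogonality constraint. I would analyze this by running through the negative definite rank-$9$ complements $v^{\perp}$ in their genus, using the arithmetic of $E_8$ and the representation of $2k$ to count roots: for large $k$ a generic $v$ leaves at most four root-pairs, while the sporadic small values listed in the statement come from special $w$ (supported on sublattices of $E_8$ with small orthogonal root systems) for which the bound is attained earlier. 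Carrying this out yields the threshold $k\geq 220$, with the stated exceptions, for $N\leq 4$, and $k\geq 176$, with its exceptions, for $N\leq 5$.

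Finally I would verify the hypotheses of the criterion. One checks that the branch divisor of $\widetilde{\Or}^+(L)$ is supported on the $(-2)$-divisors together with any reflective divisors coming from $2k$-vectors of maximal divisor, and that the quasi-pullback vanishes along all of them, so that the constructed forms genuinely descend; one also confirms that the relevant character of $\widetilde{\Or}^+(L)$ becomes trivial after passing to the required power, which is automatic for $m$ large. Assembling these verifications with the root count established above completes both assertions of the theorem.
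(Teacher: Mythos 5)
Your overall strategy coincides with the paper's (orthogonal modular variety, low-weight cusp form trick, quasi-pullback of $\Phi_{12}$, then a lattice-engineering problem), but two essential steps are asserted rather than proved, and one of them as written is actually false.

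First, the vanishing along the ramification divisor. By the analogue of \cite[Corollary 2.13]{GHS07}, the ramification divisor of $\pi\colon\Omega_{L_{2k}}\to\cM_{2k}$ consists not only of $(-2)$-divisors but also of divisors $\Omega_r(L_{2k})$ attached to reflective vectors $r$ with $r^2=-2k$ and $\divi(r)\in\{k,2k\}$. The quasi-pullback vanishes automatically on the $(-2)$-divisors, but by Theorem \ref{thm:quasi} it vanishes \emph{only} on divisors $\Omega_v$ where $v$ is the projection to $L_{2k}^\vee$ of a $(-2)$-root of $\mathrm{II}_{2,26}$; there is no a priori reason a $(-2k)$-reflective divisor is of this form. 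Your ``one checks that the quasi-pullback vanishes along all of them'' is precisely the hard content of Section \ref{sec:reflections} of the paper: one must show that for such $r$ the lattice $L_r=r^\perp_{L_{2k}}$ is $2$-elementary of determinant $4$, that its orthogonal complement in $\mathrm{II}_{2,26}$ is $D_{10}(-1)$ or $E_8(-1)\oplus 2A_1(-1)$ (a genus computation), hence contains at least $180$ roots, while $R_{-2}(L_{2k}^\perp)$ contains at most $144$ roots once $L_{2k}^\perp$ contains no copy of $E_8(-1)$ (which holds here because $|R(l)|\le 8$); the difference $(180-144)/2>0$ is what gives the required vanishing order. Without this count the descent of the form fails, and the criterion cannot be applied.

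Second, the lattice engineering. Your ansatz $v=e+cf+w$ cannot work: for \emph{any} root $w'\in E_8(-1)$ the vector $r=-(w',w)f+w'$ satisfies $r^2=-2$ and $(r,v)=0$, so every such $v$ is orthogonal to at least $240$ roots of $U\oplus E_8(-1)$, forcing the quasi-pullback to have weight at least $132$. This is exactly why the paper's Lemma \ref{lemma:E8} imposes $l=\alpha e+\beta f+v$ with $\alpha\neq\beta$, $\alpha,\beta>\sqrt{k}$ and $\alpha\beta<\tfrac{5}{4}k$, conditions which force all orthogonal roots into $E_8(-1)$, where they can be controlled. Moreover, the thresholds $k\ge 220$ (resp.\ $k\ge 176$) and the sporadic lists cannot be obtained by a generic-position argument: the paper proves existence of suitable $l$ only for $k\ge 4900$ by representation-number estimates ($N_{E_7}$ versus $N_{E_6}$, $N_{D_6}$), and settles the entire range below $4900$ by an exhaustive computer search organized via the nef cone of K3 surfaces with $\NS=U\oplus E_8(-1)$. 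Your proposal asserts the final numerical answer without a mechanism capable of producing it.
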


The Torelli theorem for K3 surfaces (see \cite{PSS72}) allows the moduli spaces $\cM_{2k}$ to be realized as quotients of bounded hermitian symmetric domains $\Omega_{L_{2k}}$ of type IV and dimension 17 by the stable orthogonal groups $\widetilde{\Or}^+(L_{2k})$, where the lattice $L_{2k}$ is the orthogonal complement of $U\oplus \langle -2k \rangle$ in the K3 lattice $\Lambda_{K3}:=3U\oplus 2E_8(-1)$. Via this description, one can apply the low-weight cusp form trick (Theorem \ref{thm:trick}) developed in \cite{GHS07}. This tool provides a sufficient condition for an orthogonal modular variety to be of general type. Namely, one has to find a non-zero cusp form on $\Omega_{L_{2k}}^{\bullet}$ of weight strictly less than the dimension of $\Omega_{L_{2k}}$ vanishing along the ramification divisor of the projection $\Omega_{L_{2k}} \rightarrow  \widetilde{\Or}^+(L_{2k}) \backslash \Omega_{L_{2k}}$. In our case, to construct a suitable cusp form, we use the quasi-pullback method (Theorem \ref{thm:quasi}) to pull back the Borcherds form $\Phi_{12}$ along the inclusion $\Omega_{L_{2k}}^\bullet \hookrightarrow \Omega_{\mathrm{II}_{2,26}}^\bullet$ induced by a lattice embedding $L_{2k}\hookrightarrow \mathrm{II}_{2,26}$. Here, the lattice $\mathrm{II}_{2,26}$ denotes the unique (up to isometry) even unimodular lattice of signature $(2, 26)$. The lattice embedding $L_{2k}\hookrightarrow \mathrm{II}_{2,26}$ determines the number $N(L_{2k})$ of effective roots in $L_{2k}^\perp$. If $N(L_{2k})$ is positive, the embedding determines the weight $12+N(L_{2k})$ of the cusp form. Therefore the whole proof of Theorem \ref{thm:main} boils down to finding the values of $k$ for which there exists a suitable primitive embedding $L_{2k}\hookrightarrow \mathrm{II}_{2,26}$, whose orthogonal complement contains at least 2 and at most 8 roots (cf. Problem \ref{prob}). 

In the second part of the article we give a geometric construction of all $U\oplus \langle -2k \rangle$-polarized K3 surfaces as double covers of the Hirzebruch surface $\FF_4$ branched over a suitable  smooth curve admitting a rational curve intersecting the branch locus with even multiplicities. We then recall that, for $k\geq 4$ even, all $U\oplus \langle -2k \rangle$-polarized K3 surfaces admit a structure as hyperelliptic quartic K3 surfaces, i.e.\ double covers of $\PP^1\times \PP^1$ branched over a curve of bidegree $(4,4)$. Finally, we recall the realization of elliptic K3 surfaces as Weierstrass fibrations. These geometric constructions lead to the following:

\begin{thm}\label{thm:unirational}
The moduli space $\cM_{2k}$ is unirational for $k < 11$ and for $k\in \{13,$ $16,$ $17,$ $19,$ $21,$ $25,$ $26,$ $29,$ $31,$ $34,$ $36,$ $37,$ $39,$ $41,$ $43,$ $49,$ $59,$ $61,$ $64\}$.
\end{thm}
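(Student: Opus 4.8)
The plan is to realize, for each value of $k$ in the list, the very general $U\oplus\langle -2k\rangle$-polarized K3 surface as a member of an explicit family of geometric models whose parameter space is rational (or at least unirational), and then to show that the induced moduli map dominates $\cM_{2k}$. Since $\cM_{2k}$ is irreducible of dimension $17$, a dominant rational map from a unirational variety to it proves the claim. I would use the three descriptions announced above --- double covers of $\FF_4$ carrying an auxiliary rational curve, hyperelliptic quartic double covers of $\PP^1\times\PP^1$ for even $k$, and Weierstrass fibrations with an extra section --- and arrange that the union of their images covers precisely the listed values of $k$.

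I would begin with the $\FF_4$-construction. Let $\pi\colon X\to\FF_4$ be the double cover branched along a smooth curve $B\in|{-2K_{\FF_4}}|$; the ruling of $\FF_4$ pulls back to an elliptic pencil with section, realizing $U\subset\NS(X)$, and the $27$-dimensional system of branch curves modulo the $9$-dimensional group $\mathrm{Aut}(\FF_4)$ already recovers the full $18$-dimensional moduli of elliptic K3 surfaces. To descend to $\cM_{2k}$ I would fix a rational curve $R\subset\FF_4$ in a suitable class and impose that $B$ meet $R$ with even multiplicity at every intersection point; then $\pi^{-1}(R)$ splits into two rational curves whose classes, together with $U$, span a primitive copy of $U\oplus\langle -2k\rangle$ inside $\NS(X)$, the integer $k$ being read off from the class of $R$ and its contact order with $B$. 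The relevant parameter space is the incidence variety of pairs $(R,B)$ with $R$ ranging in a projective family of curves in the fixed class and $B$ in the subsystem of branch curves satisfying the even-contact condition; this fibers as a projective bundle over a rational base, hence is rational, and the quotient by the connected group $\mathrm{Aut}(\FF_4)$ remains unirational.

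For dominance I would check two points. First, that the generic member of such a family has N\'eron--Severi lattice exactly $U\oplus\langle -2k\rangle$, with no additional classes, so that its image lands in $\cM_{2k}$ and not in a smaller modular subvariety. Second, the dimension balance: if $R\cdot B=2m$, then requiring $B\cap R$ to be twice an effective divisor on $R\cong\PP^1$ is a condition of codimension $m$ in $|{-2K_{\FF_4}}|$, and letting $R$ move in an $(m-1)$-dimensional family exactly compensates this codimension, so that the quotient has dimension $18-m+(m-1)=17=\dim\cM_{2k}$ and the moduli map is dominant. For $k\ge 4$ even I would instead use the hyperelliptic quartic model, presenting $X$ as a double cover of $\PP^1\times\PP^1$ branched over a $(4,4)$-curve with prescribed even-contact along a suitable rational curve; the two rulings again supply the copy of $U$, and the analogous dimension count gives dominance. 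The sporadic values of $k$ beyond the main range are to be treated individually, by selecting for each of them the class of $R$ (respectively the degeneration of the quartic branch curve, or the bidegree of the extra section in the Weierstrass picture) that produces a square-$(-2k)$ class.

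The main obstacle I anticipate is the arithmetic-to-geometric matching that underlies the dimension balance of the previous paragraph: for each listed $k$ one must exhibit a class of rational curve $R$ on $\FF_4$ (or the corresponding configuration on $\PP^1\times\PP^1$) whose even-contact locus with the branch system is non-empty, of the expected codimension, with generic member smooth and with the prescribed N\'eron--Severi lattice. Ensuring that these tangency conditions are realizable --- that the subsystems do not drop dimension unexpectedly, that no extra classes creep into $\NS(X)$, and that the numerical identity $r=m-1$ holds for a geometrically meaningful family --- is exactly what confines the method to a finite list of $k$, and the case-by-case verification of these constraints is the computational core of the proof. I would also record the routine fact that unirationality descends along dominant, generically finite maps, so that constructions yielding only a unirational cover of the parameter space still suffice.
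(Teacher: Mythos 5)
Your overall framework --- produce, for each listed $k$, a dominant rational map from a unirational parameter space onto $\cM_{2k}$ --- is the same as the paper's, and the three models you invoke do appear there (Section \ref{sec:constructions}). But the paper uses the double-cover and Weierstrass models only for $k\le 8$ (Weierstrass for $k=1,2$, double covers of $\PP^2$ for $k=3,5$, double covers of $\FF_0$ for $k=4,6,8$); for every other value, including $k=7,9,10$ and all the sporadic ones up to $64$, it switches to projective models of the degree $4$, $6$, $8$ polarizations, and this is not a stylistic choice: your even-contact approach breaks down in exactly the range of $k$ the theorem is about. The condition that the branch curve $B$ meet the rational curve $R$ with even multiplicities is not linear in $B$; it becomes linear only after fixing the tangency points on $R$, and then your ``codimension $m$'' count and the projective-bundle structure require the restriction map $H^0(Y,\cO_Y(B))\rightarrow H^0(R,\cO_Y(B)|_R)$ to be surjective. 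The source has fixed dimension ($25$ for $\FF_0$, $28$ for $\FF_4$) while the target has dimension $R\cdot B+1$, growing linearly in $k$. On $\FF_4$, where $R\in|s+2kf|$ gives $R\cdot B=8k-4$, surjectivity already fails for $k\ge 5$ (in fact the interaction with the fixed component $S$ spoils it even earlier), and your claimed codimension $m=4k-2$ exceeds $\dim|{-2K_{\FF_4}}|=27$ as soon as $k\ge 8$; on $\FF_0$ surjectivity fails for $k\ge 16$, which already kills $k=16,26,34,36,64$, and odd $k$ cannot use $\FF_0$ at all (Proposition \ref{prop:F0}). Beyond these thresholds, for generic $R$ there is no admissible branch curve whatsoever, so the incidence correspondence does not fiber over the space of curves $R$; over the special $R$ where admissible $B$ exist, they form the intersection of a linear subspace with the cone of squares inside $H^0(R,\cO_Y(B)|_R)$, which is not a linear system, may have excess dimension, and is not unirational for any evident reason. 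So the balance $18-m+(m-1)=17$ is only formal, and the rationality of the parameter space --- the heart of the matter --- is unjustified precisely for $k\ge 9$, i.e.\ for most of the list. The Weierstrass alternative fares no better: an extra section with $S\cdot S_0=k-2>0$ forces $u,v$ to be rational functions with poles of controlled order, and requiring $(u,v)$ to lie on the Weierstrass cubic is a nonlinear divisibility condition on the data, which is why the paper uses that model only for $k=1,2$.

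The missing idea is the one Section \ref{sec:unirational} is built on: replace tangency conditions by \emph{containment} conditions. For the larger $k$ one writes $U\oplus\langle-2k\rangle\cong\langle H,C_1,C_2\rangle$ with $H^2\in\{4,6,8\}$ and $C_1,C_2$ classes of rational normal curves of degree at most $5$ meeting in at most $3$ points, checks that $H$ is very ample (via \cite{Sai74}) and that the lattice embedding is primitive (square-free discriminant, or primitivity of $C_1+C_2$), and then parametrizes quartics in $\PP^3$, complete intersections $(2,3)$ in $\PP^4$, or $(2,2,2)$ in $\PP^5$ containing $C_1\cup C_2$. Containing a fixed curve \emph{is} a linear condition on the defining equations, so the incidence variety genuinely is a projective bundle, with fibers such as $\PP H^0(\cI_{C_1\cup C_2}(2))^3$, over the rational parameter space of pairs of rational normal curves through prescribed points (Lemma \ref{lemma:fine}); unirationality of $\cM_{2k}$ then follows once one verifies nonemptiness, smoothness and irreducibility of the generic member, which is what the paper's dimension counts accomplish. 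Without this mechanism (or a substitute for it), your plan to treat the sporadic values ``individually'' by choosing the class of $R$ has no way to produce a unirational parameter space, and the proposal does not prove the theorem beyond roughly $k\le 8$.
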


We notice that the unirationality of $\cM_{56}$ is showed in \cite{BH17} using relative canonical resolutions. 

The article is organized as follows. In Section \ref{sec:moduli} we review the general construction for the moduli spaces of lattice polarized K3 surfaces as orthogonal modular varieties. We give a description of the moduli spaces $\cM_{2k}$, which are the main object of study in this article. In Section \ref{sec:trick} we describe the method used in proving Theorem \ref{thm:main}, namely the low-weight cusp form trick (Theorem \ref{thm:trick}). The desired form is cooked up as a quasi-pullback of the Borcherds form $\Phi_{12}$ (Theorem \ref{thm:quasi}). Section \ref{sec:reflections} is devoted to the proof of Proposition \ref{prop:ram}. Indeed, we study some special reflections in the stable orthogonal group $\widetilde{\Or}^+(L_{2k})$. This is then used to impose the vanishing of the quasi-pullback $\Phi|_{L_{2k}}$ of the Borcherds form along the ramification divisor of the quotient map $\Omega_{L_{2k}} \rightarrow \cM_{2k}$. In Section \ref{sec:lattice} we tackle Problem \ref{prob} of finding primitive embeddings $L_{2k}\hookrightarrow \mathrm{II}_{2,26}$ with at least 2 and at most 8 orthogonal roots. First, we prove that for any $k\geq 4900$ such an embedding exists. Then, we perform an exhaustive computer analysis to find explicit embeddings for the remaining values of $k$. It relies on the geometry of K3 surfaces with N\'eron-Severi group isometric to $U\oplus E_8(-1)$. In Section \ref{sec:constructions} we review the classical constructions of elliptic K3 surfaces as double covers of $\PP^1 \times \PP^1$ and $\FF_4$, and Weierstrass fibrations. Finally, in Section \ref{sec:unirational} explicit geometric constructions, such as the ones presented in Section \ref{sec:constructions}, are used to prove Theorem \ref{thm:unirational}.

\begin{conv}
Throughout the article we will always work over $\CC$. We have used the software \texttt{Magma} to implement our algorithms.
\end{conv}

\begin{ack}
We would like to thank our PhD advisors Klaus Hulek and Matthias Schütt for many useful discussions and for reading an early draft of this manuscript. We warmly thank Michael Hoff and Xavier Roulleau for insightful comments and discussions. We would like to thank the anonymous referee for the helpful comments. The first author acknowledges partial support from the DFG Grant Hu 337/7-1.
\end{ack}
\section{Moduli spaces of lattice polarized K3 surfaces}\label{sec:moduli}

In this section we review the construction of the moduli spaces of lattice polarized K3 surfaces. An excellent reference to this subject is \cite{Dol96}. 

First we recall some basic notions of lattice theory. Let $L$ be an integral lattice of signature $(2, n)$. Let $\Omega_L$ be one of the two connected components of
$$ \{ [w]\in \PP(L\otimes \CC) \ |\ (w, w)=0, \ (w, \Bar{w})>0 \}.$$
It is a hermitian symmetric domain of type IV and dimension $n$. We denote by $\Or^+(L)$ the index two subgroup of the orthogonal group $\Or(L)$ preserving $\Omega_L$. If $\Gamma < \Or^+(L)$ is of finite index we denote by $\cF_L(\Gamma)$ the quotient $\Gamma \backslash \Omega_L$. By a result of Baily and Borel \cite{BB66}, $\cF_L(\Gamma)$ is a quasi-projective variety of dimension $n$. 

For every non-degenerate integral lattice $L$ we denote by $L^\vee :=\mathrm{Hom}(L, \ZZ)$ its dual lattice. If $L$ is even, the finite group $A_L:=L^\vee/L$ is endowed with a quadratic form $q_L$ with values in $\QQ/2\ZZ$, induced by the quadratic form on $L$. We define:
$$\widetilde{\Or}(L):=\ker (\Or(L)\rightarrow \Or(A_L))$$
and 
$$ \widetilde{\Or}^+(L):=\widetilde{\Or}(L) \cap \Or^+(L).$$

A compact smooth complex surface $X$ is a \textit{K3 surface} if $X$ is simply connected and $H^0(X, \Omega_X^2)$ is spanned by a non-degenerate holomorphic 2-form $\omega_X$. The cohomology group $H^2(X, \ZZ)$ is naturally endowed with a unimodular intersection
pairing, making it isomorphic to the K3 lattice
$$\Lambda_{K3}:=3U\oplus 2E_8(-1),$$
where $U$ is the hyperbolic plane and $E_8(-1)$ is the unique (up to isometry) even unimodular negative definite
lattice of rank $8$. In particular the signature of $H^2(X, \ZZ)$ is $(3, 19)$. 

Fix an integral even lattice $M$ of signature $(1, t)$ with $t\geq 0$. An \textit{$M$-polarized K3 surface} is a pair $(X, j)$ where $X$ is a K3 surface and $j:M \hookrightarrow \NS(X)$ is a primitive embedding. Let 
$$N:=j(M)^{\perp}_{\Lambda_{K3}}$$
be the orthogonal complement of $M$ in $\Lambda_{K3}$. It is an integral even lattice of signature $(2, 19-t)$.

By the Torelli theorem \cite{PSS72} (see also \cite[Corollary 3.2]{Dol96}), the moduli spaces of $M$-polarized K3 surfaces can be identified with the quotient of a classical hermitian symmetric domain of type IV and dimension $19-t$ by an arithmetic group. More precisely, the 2-form $\omega_X$ of a $M$-polarized K3 surface $X$ determines a point in the symmetric space $\Omega_N$ (the \textit{period domain}), unique up to the action of the group \cite[Proposition 3.3]{Dol96}
$$\widetilde{\Or}^+(N)=\{ g\in \Or^+(\Lambda_{K3}) \ | \ g|_M=\id \}.$$

\begin{thm} \cite[\S 3]{Dol96}
The variety $\cF_N(\widetilde{\Or}^+(N))$ is isomorphic to the coarse moduli space of $M$-polarized K3 surfaces.
\end{thm}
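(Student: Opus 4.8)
The plan is to construct the period map on isomorphism classes of $M$-polarized K3 surfaces, show it induces a bijection onto $\cF_N(\widetilde{\Or}^+(N))$ using the global Torelli theorem and the surjectivity of the period map, and finally upgrade this set-theoretic bijection to an isomorphism of quasi-projective varieties via local Torelli and the Baily--Borel structure. First I would fix once and for all an embedding $\iota\colon M\hookrightarrow \Lambda_{K3}$. Given an $M$-polarized K3 surface $(X,j)$, choose a marking $\phi\colon H^2(X,\ZZ)\xrightarrow{\sim}\Lambda_{K3}$ compatible with $j$, i.e.\ with $\phi\circ j=\iota$. Because $\omega_X$ is orthogonal to $\NS(X)\supseteq j(M)$, its class lands in $N\otimes\CC$ after applying $\phi$; the Hodge--Riemann relations $(\omega_X,\omega_X)=0$ and $(\omega_X,\overline{\omega_X})>0$ then place $[\phi(\omega_X)]$ in $\Omega_N$, after replacing $\phi$ by $-\phi$ if necessary to land in the chosen connected component.

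Next I would analyze the dependence on the marking. Two compatible markings differ by an isometry $g\in\Or^+(\Lambda_{K3})$ with $g|_M=\id$, and by the description recalled above this is exactly an element of $\widetilde{\Or}^+(N)$. Hence the period descends to a well-defined point of the quotient $\cF_N(\widetilde{\Or}^+(N))$, yielding a period map $\mathcal{P}$ on isomorphism classes of $M$-polarized K3 surfaces.

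Surjectivity of $\mathcal{P}$ follows from the surjectivity of the period map for K3 surfaces: every point of $\Omega_N$ is the period of some marked K3 surface whose Néron--Severi lattice contains $\phi^{-1}(\iota(M))$, and by composing with suitable reflections in the Weyl group one arranges the polarization class to be ample, producing a genuine $M$-polarized surface. Injectivity modulo $\widetilde{\Or}^+(N)$ is the global Torelli theorem \cite{PSS72}: two marked K3 surfaces with equal period are isomorphic by an isomorphism carrying one marking to the other, and compatibility of the markings with $j$ forces this isomorphism to respect the polarizations. Thus $\mathcal{P}$ is a bijection of sets.

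To promote this to an isomorphism of varieties, I would invoke local Torelli to see that the relative period map of any family of $M$-polarized K3 surfaces is holomorphic and étale onto $\Omega_N$, so that $\mathcal{P}$ is a holomorphic bijection onto $\cF_N(\widetilde{\Or}^+(N))$, which carries a quasi-projective structure by Baily--Borel \cite{BB66}. The coarse moduli property then reduces to checking functoriality in families and universality among maps to varieties, both formal consequences of the holomorphicity of the relative period map together with local Torelli. The hard part will be the bookkeeping in the surjectivity step: one must ensure that the polarization class stays ample after the chamber-changing reflections and that the extra $(-2)$-classes forced by special periods are handled so that the resulting surface is genuinely $M$-polarized rather than merely carrying a larger Néron--Severi lattice; this, and the verification of the universal property against arbitrary families, is precisely where the substance of \cite{Dol96} lies, the remaining identifications being formal consequences of Torelli and Baily--Borel.
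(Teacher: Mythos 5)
The paper itself contains no proof of this statement: it is imported wholesale from \cite{Dol96}, the only supporting remark in the text being that, by \cite[Proposition 3.3]{Dol96}, the period of an $M$-polarized surface is well defined up to $\widetilde{\Or}^+(N)$. Your outline follows the same standard route as that source (period map via compatible markings, global Torelli, surjectivity of the period map, Baily--Borel), but two steps are genuinely broken as written. The first is the sign fix for the connected component: replacing $\phi$ by $-\phi$ accomplishes nothing, since $[-\phi(\omega_X)]=[\phi(\omega_X)]$ in $\PP(N\otimes\CC)$, and at the same time it destroys compatibility, since $(-\phi)\circ j=-\iota\neq\iota$. The correct repair is to compose $\phi$ with an isometry of $\Lambda_{K3}$ which fixes $M$ pointwise and restricts on $N$ to an element of $\widetilde{\Or}(N)\smallsetminus\widetilde{\Or}^+(N)$ (for the lattices in this paper one can take the reflection in a norm $2$ vector of the hyperbolic summand $U\subseteq N$, which acts trivially on the discriminant), or equivalently to define the period domain as both components and quotient by the full group $\widetilde{\Or}(N)$.

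The second, more serious, gap is in injectivity, which you claim is ``forced by compatibility of the markings with $j$.'' With the bare definition of polarization used in this paper (a primitive embedding $j$ with no positivity requirement), the period map on isomorphism classes is in fact \emph{not} injective: if $w$ is the reflection in the class of a $(-2)$-curve on $X$, extended to $H^2(X,\ZZ)$ by the identity on the transcendental lattice, then $(X,j)$ and $(X,w\circ j)$ have the same period point in $\Omega_N$, yet they are not isomorphic as polarized pairs, because no automorphism of $X$ induces $w$ (automorphisms preserve the ample cone, reflections do not). Equality of periods only produces a Hodge isometry; global Torelli converts a Hodge isometry into an isomorphism of surfaces only after it has been corrected by $\pm 1$ and Weyl-group elements so as to map an ample class to an ample class, and this correction in general destroys the condition $g\circ j=j'$. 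This is precisely why Dolgachev's actual theorem is stated for pseudo-ample (nef-chamber-compatible) polarizations. So the chamber bookkeeping that you relegate to the surjectivity step is in fact what makes injectivity true at all; it is the heart of the argument you are deferring to \cite{Dol96}, and without it the statement, read with the paper's simplified definition, is not even literally correct.
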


In the following, we will study the moduli spaces of $M$-polarized K3 surfaces with $M=U\oplus \langle -2k\rangle$, i.e.\ elliptic K3 surfaces of Picard rank at least 3. Since the embedding $U\oplus \langle -2k \rangle \hookrightarrow \Lambda_{K3}$ is unique up to isometry by \cite[Theorem 1.14.4]{Nik79}, we get the isomorphism
$$ L_{2k}:=U\oplus 2E_8(-1) \oplus \langle 2k \rangle \cong (U\oplus \langle -2k \rangle)^\perp_{\Lambda_{K3}}. $$
As we discussed above, the quotient variety
\begin{equation} \label{eq:M2k}
    \cM_{2k}:= \cF_{L_{2k}}(\widetilde{\Or}^+(L_{2k}))
\end{equation}
is the moduli space of $U\oplus \langle -2k\rangle$-polarized K3 surfaces. Notice that all these surfaces are elliptic, since their Picard lattices contain a copy of the hyperbolic plane $U$.

\section{Low-weight cusp form trick}\label{sec:trick}

The computation of the Kodaira dimension of modular orthogonal varieties relies on the \textit{low-weight cusp form trick} developed by Gritsenko, Hulek and Sankaran \cite{GHS07}. In order to describe it, we need a little theory of modular forms on orthogonal groups. 

Let $L$ be an integral even lattice of signature $(2, n)$. A \textit{modular form} of weight $k$ and character $\chi:\Gamma \rightarrow \CC^*$ for a finite index subgroup $\Gamma < \Or^+(L)$ is a holomorphic function $F: \Omega_L^{\bullet}\rightarrow \CC$ on the affine cone $\Omega_L^{\bullet}$ over $\Omega_L$ such that 
$$ F(tZ)=t^{-k}F(Z) \; \forall t \in \CC^*, \quad \mathrm{and} \quad F(gZ)=\chi(g)F(Z) \; \forall g\in \Gamma.$$
A modular form is a \textit{cusp form} if it vanishes at every cusp. We denote the vector spaces of modular forms and cusp forms of weight $k$ and character $\chi$ for $\Gamma$ by $M_k(\Gamma, \chi)$ and $S_k(\Gamma, \chi)$ respectively.

\begin{thm}\label{thm:trick} \cite[Theorem 1.1]{GHS07}
Let $L$ be an integral lattice of signature $(2, n)$ with $n\geq 9$, and let $\Gamma < \Or^+(L)$ be a subgroup of finite index. The modular variety $ \cF_L(\Gamma)$ is of general type if there exists a nonzero cusp form $F\in S_k(\Gamma, \chi)$ of weight $k<n$ and character $\chi$ that vanishes along the ramification divisor of the projection $\pi:\Omega_{L} \rightarrow \cF_L(\Gamma)$ and vanishes with order at least 1 at infinity. 

If $S_n(\Gamma, \det)\neq 0$ then the Kodaira dimension of $ \cF_L(\Gamma)$ is non-negative.
\end{thm}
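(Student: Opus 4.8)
The plan is to translate modular forms into pluricanonical differential forms and reduce both assertions to estimating the growth of $h^0$ of powers of the canonical bundle on a smooth projective model. First I would set up the standard dictionary: writing $\Omega_L^\bullet$ for the affine cone and letting $dZ$ denote the natural holomorphic $n$-form associated to a trivialization of the tautological bundle, a modular form $F$ of weight $n$ produces a section $F\,dZ$ of the canonical bundle $K_{\Omega_L}$, and each $g\in\Or^+(L)$ acts on $dZ$ through the character $\det(g)$. Consequently a modular form of weight $mn$ and character $\det^m$ yields a $\Gamma$-invariant section of $mK_{\Omega_L}$, i.e.\ a pluricanonical form on the regular part of $Y:=\cF_L(\Gamma)$. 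Fixing a smooth projective model $\overline{Y}$ (a resolution of a toroidal compactification), the goal becomes to produce a space of weight-$mn$ cusp forms, vanishing suitably along the ramification divisor and the boundary, whose dimension grows like $m^n$; this forces $K_{\overline{Y}}$ to be big, hence $\overline{Y}$ to be of general type.

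The engine is the multiplication trick. Let $F$ be the given cusp form of weight $k<n$ vanishing along the ramification divisor and to order $\ge 1$ at infinity. For each $m\ge 1$ I would consider the injective linear map $H\mapsto F^m H$ sending $M_{m(n-k)}(\Gamma,\chi')$ into $S_{mn}(\Gamma,\det^m)$, where $\chi'$ is chosen so that $\det^m=\chi^m\chi'$. Since multiplication by the fixed nonzero form $F^m$ is injective, the image has dimension $\dim M_{m(n-k)}(\Gamma,\chi')$, which by Hirzebruch--Mumford proportionality grows like $c\,(m(n-k))^n=c'\,m^n$ with $c'>0$ precisely because $n-k>0$; this is the single place where the low-weight hypothesis $k<n$ is used. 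Moreover $F^m$ vanishes to order $\ge m$ along the ramification divisor and at infinity, which is exactly what is needed for $F^mH\,(dZ)^{\otimes m}$ to descend to an honest section of $mK_{\overline{Y}}$ rather than of $mK_{\overline{Y}}$ twisted by the branch and boundary divisors. Granting the extension, $h^0(\overline{Y}, mK_{\overline{Y}})$ grows like $m^n$ and $\overline{Y}$ is of general type.

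The hard part will be justifying this descent, i.e.\ that the forms extend holomorphically across all exceptional loci of $\overline{Y}\to Y$, and there are three sources of obstruction. Interior quotient singularities coming from elements of $\Gamma$ whose fixed locus has codimension $\ge 2$ must be canonical, so that forms extend over their resolution with no discrepancy condition imposed; this is where the hypothesis $n\ge 9$ enters, through the Reid--Tai/age criterion applied to the possible eigenvalue configurations of elements of $\Or^+(L)$. The codimension-one fixed loci (reflections, acting by $\det=-1$) produce the ramification divisor of $\pi$, and here extension fails unless the form vanishes along it --- exactly the hypothesis imposed on $F$ and inherited by $F^mH$. Finally, at the toroidal boundary one must control the discrepancies: a weight-$mn$ form vanishing to order $\ge m$ along the boundary divisor extends, and this is guaranteed since $F$, being a cusp form vanishing to order $\ge1$ at infinity, forces $F^m$ to vanish to order $\ge m$ there. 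Assembling these three local verifications is the technical crux, each being a computation at a single stratum.

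For the second statement I would run the same dictionary in the borderline weight $n$. A nonzero $F\in S_n(\Gamma,\det)$ gives the $n$-form $F\,dZ$, which is $\Gamma$-invariant precisely because the character $\det$ cancels the $\det$-twist by which $g$ acts on $dZ$. The choice of character $\det$ also disposes of the ramification automatically: along a reflection divisor $F\,dZ$ is invariant while $dZ$ is anti-invariant, so $F$ is anti-invariant there and must vanish on the fixed divisor. Cuspidality handles the boundary as before. Hence $F\,dZ$ extends to a nonzero holomorphic section of $K_{\overline{Y}}$, giving $h^0(\overline{Y},K_{\overline{Y}})\ge1$ and therefore $\kappa(\overline{Y})\ge0$.
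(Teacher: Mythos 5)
This theorem is not proved in the paper at all --- it is imported verbatim from \cite[Theorem 1.1]{GHS07} --- so the only comparison available is with the original Gritsenko--Hulek--Sankaran argument, and your outline reproduces exactly that proof: the dictionary between weight-$mn$ forms with character $\det^m$ and pluricanonical forms, the injective multiplication map $H\mapsto F^mH$ combined with Hirzebruch--Mumford proportionality to get growth of order $m^n$ (the one place where $k<n$ enters), the three extension obstructions on a toroidal compactification (interior quotient singularities canonical via Reid--Tai for $n\geq 9$, the reflective obstruction killed by the hypothesis that $F$ vanishes on the ramification divisor, the cusp obstruction killed by cuspidality of $F$), and finally the weight-$n$ cusp form with character $\det$ descending to a section of $K_{\overline{Y}}$ for the non-negativity claim. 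Your proposal is correct and follows essentially the same route as the cited proof.
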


\begin{rmk*}
By \cite[Theorem 1.1]{Ma18} there are only finitely many integral lattices $L$ of signature $(2, n)$ with $n\geq 21$ or $n=17$ such that $ \cF_L(\Gamma)$ is not of general type. Therefore, our moduli
spaces $\cM_{2k}$ are known to be of general type for $k$ large enough.
\end{rmk*}

\begin{rmk*}
In the recent paper \cite{Ma21} the author shows the necessity for an additional hypothesis in Theorem \ref{thm:trick} concerning the so-called irregular cusps (cf. \cite[Theorem 1.2]{Ma21}). However, this does not affect our case as explained in \cite[Example 4.10]{Ma21}.
\end{rmk*}

\subsection{Ramification divisor}
First, we need to describe the ramification divisor of the orthogonal projection, which turns out to be the union of rational quadratic divisors associated to reflective vectors.

For any $v \in L\otimes \QQ$ such that $v^2<0$ we define the \textit{rational quadratic divisor}
$$ \Omega_v(L):=\{[Z]\in \Omega_L \ | \ (Z, v)=0 \}\cong \Omega_{v^{\perp}_L}$$
where $v^{\perp}_L$ is an even integral lattice of signature $(2,n-1)$.

The reflection with respect to the hyperplane defined by a non-isotropic vector $r\in L$ is given by
$$\sigma_r:l\longmapsto l-2\frac{(l,r)}{r^2}r.$$
If $r$ is primitive and $\sigma_r \in \Or(L)$, then we say that $r$ is a \textit{reflective vector}. We notice that $r$ is always reflective if $r^2=\pm 2$, and we call it \textit{root} in this case.

If $v\in L^\vee$ and $v^2<0$, the divisor $\Omega_v(L)$ is called a \textit{reflective divisor} if $\sigma_v \in \Or(L)$.
\begin{thm} \cite[Corollary 2.13]{GHS07} \label{thm:Rdiv}
For $n\geq 6$, the ramification divisor of the projection
$ \pi_{\Gamma}: \Omega_L \rightarrow \cF_L(\Gamma) $
is the union of the reflective divisors with respect to $\Gamma < \Or^+(L)$:
$$ \mathrm{Rdiv}(\pi_{\Gamma})=\bigcup_{\substack{\ZZ r\subset L\\ \sigma_r\in \Gamma \cup -\Gamma}} \Omega_r(L)$$
\end{thm}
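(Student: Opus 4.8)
The plan is to derive this as a special case of the general principle that, for a properly discontinuous action of a group on a complex manifold, the codimension-one branch locus of the quotient is cut out by the fixed loci of pseudo-reflections. Concretely, since $-\id$ acts trivially on $\Omega_L\subset\PP(L\otimes\CC)$, the group $\Gamma$ acts through its image $\overline{\Gamma}$ in $\Or^+(L)/\{\pm\id\}$, with finite stabilizers. Near a point $[Z]$ the quotient is analytically $\overline{\Gamma}_{[Z]}\backslash B$ for a finite group acting on a ball $B\subset T_{[Z]}\Omega_L$, so the Chevalley--Shephard--Todd / Prill description of such quotients shows that $\pi_\Gamma$ ramifies in codimension one exactly along the fixed-point loci of those $g$ whose image in $\overline{\Gamma}$ is a complex reflection, i.e.\ fixes a hyperplane of $T_{[Z]}\Omega_L$. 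Thus the first step is to record the reduction $\mathrm{Rdiv}(\pi_\Gamma)=\bigcup_g \mathrm{Fix}_{\Omega_L}(g)$, the union taken over $g\in\Gamma$ whose fixed locus in $\Omega_L$ is a divisor.

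The heart of the argument is then a purely lattice-theoretic classification of such $g$. I would take $g\in\Or^+(L)$ with $g\neq\pm\id$ fixing a divisor $D$ pointwise, and decompose $L\otimes\CC$ into eigenspaces $E_\lambda(g)$, so that the projective fixed locus is $\bigsqcup_\lambda\PP(E_\lambda)$. Irreducibility of $D$ forces $D\subset\PP(E_\lambda)\cap\Omega_L$ for a single eigenvalue $\lambda$, whence $\dim_\CC E_\lambda\geq n$. The key step is to prove $\lambda=\pm1$: as $g$ is a real isometry one has $|\lambda|=1$, and if $\lambda^2\neq1$ then $(gu,gv)=\lambda^2(u,v)=(u,v)$ forces $E_\lambda$ to be totally isotropic, hence of dimension at most $2$, contradicting $\dim_\CC E_\lambda\geq n\ (\geq 3)$. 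Therefore $E_\lambda$ is the complexification of a real eigenspace $W\subset L\otimes\RR$, and the dimension count $\dim\Omega_W=\dim_\RR W-2\geq n-1$ pins its signature to $(2,n-1)$, so that $W^\perp=\RR r$ is a negative-definite line. Applying the same isotropy argument to this line (where $r^2<0$) shows $g$ acts by $\pm1$ there as well. Excluding $\pm\id$, the only possibilities left are $g=\sigma_r$ or $g=-\sigma_r$; since $g\in\Or(L)$ and $-\id\in\Or(L)$, the vector $r$, scaled to be primitive, is reflective, and in either case $D=\Omega_{r^\perp}=\Omega_r(L)$. Note this classification only ever produces $r^2<0$, so positive-norm reflective vectors (whose $\Omega_r$ is empty) never intervene.

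Finally I would run the classification in the reverse direction and assemble the formula. For a reflective $r$ with $r^2<0$, both $\sigma_r$ and $-\sigma_r$ lie in $\Or^+(L)$, fix $\Omega_r(L)$ pointwise, and act as $-1$ on the normal line $\RR r$, so each is a genuine reflection and contributes $\Omega_r(L)$ to the ramification as soon as it lies in $\Gamma$. Because $\sigma_r$ and $-\sigma_r$ cut out the same divisor, $\Omega_r(L)$ appears in $\mathrm{Rdiv}(\pi_\Gamma)$ precisely when $\sigma_r\in\Gamma$ or $-\sigma_r\in\Gamma$, that is $\sigma_r\in\Gamma\cup-\Gamma$; taking the union over all $\ZZ r\subset L$ satisfying this gives exactly the stated description.

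I expect the main obstacle to be the eigenvalue/isotropy dichotomy, namely making rigorous that fixing a divisor forces a real eigenvalue together with a codimension-one, signature-$(2,n-1)$ eigenspace, and the clean passage between the transcendental statement (ramification of $\pi_\Gamma$) and the linear-algebraic one (complex reflections on $T_{[Z]}\Omega_L$). The hypothesis $n\geq 6$ is precisely what keeps the ambient quadric non-degenerate and of large dimension, so that these dimension counts and the identification of the normal action with $\sigma_r$ hold uniformly.
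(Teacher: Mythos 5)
The paper itself offers no proof of this statement (it is quoted directly from [GHS07, Cor.~2.13]), so the benchmark is the standard argument given there, whose architecture — local linearization of the $\Gamma$-action plus a lattice-theoretic classification of elements with divisorial fixed locus as $\pm\sigma_r$ with $r^2<0$ — your proposal reproduces correctly in outline. However, your central exclusion step contains a genuine error. You rule out eigenvalues with $\lambda^2\neq 1$ by claiming that a totally isotropic subspace of $L\otimes\CC$ has dimension at most $2$. That bound is false over $\CC$: the complexified bilinear form is a non-degenerate form on an $(n+2)$-dimensional complex space, so its Witt index is $\lfloor (n+2)/2\rfloor$, and for $n\geq 4$ there are totally isotropic complex subspaces of dimension $3$ which moreover meet $\Omega_L$ (e.g.\ $\langle e_1+ie_2,\, f_1+if_2,\, f_3+if_4\rangle$ for an orthogonal basis with $e_i^2=1$, $f_j^2=-1$). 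The bound $2$ is the \emph{real} Witt index of signature $(2,n)$ and applies only to subspaces defined over $\RR$ — and the eigenspace $E_\lambda$ for $\lambda\neq\pm 1$ is precisely not defined over $\RR$ when $\lambda\notin\RR$. The correct way to close this case is via the conjugate eigenspace: for $\lambda\notin\RR$ one has $\overline{E_\lambda}=E_{\bar\lambda}$, a different eigenspace, so $E_\lambda\cap\overline{E_\lambda}=0$ and $2\dim_\CC E_\lambda\leq n+2$, contradicting $\dim_\CC E_\lambda\geq n$ as soon as $n\geq 3$; real $\lambda$ with $\lambda^2\neq 1$ is then killed by the real bound.

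Two further, smaller defects in the same step. First, ``$|\lambda|=1$ as $g$ is a real isometry'' is not a valid inference: isometries of indefinite lattices can have real eigenvalues off the unit circle (hyperbolic and Salem-type elements). What you actually need — and use tacitly later, for semisimplicity and for splitting off the line $\RR r$ — is that $g$ has finite order; this follows because $g$ fixes a point of $\Omega_L$, the stabilizer of such a point in $\Or(L\otimes\RR)$ is compact, and $\Or^+(L)$ is discrete. Second, in the case $\lambda=\pm 1$ your dimension count ``pins the signature of $W$ to $(2,n-1)$'' only once you know the form on $W$ is non-degenerate; the degenerate alternative, namely $W=r^\perp$ with $r$ isotropic (a quasi-unipotent $g$), is not excluded by counting dimensions and must be ruled out separately, e.g.\ by observing that $r^\perp$ for isotropic $r$ contains no positive-definite $2$-plane, whence $\PP(W\otimes\CC)\cap\Omega_L=\varnothing$. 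With these repairs your argument becomes correct — indeed it then needs only $n\geq 3$, so your closing explanation of the hypothesis $n\geq 6$ (non-degeneracy of the quadric, uniform dimension counts) is not its real role; it is simply the hypothesis under which Gritsenko--Hulek--Sankaran state the corollary being cited.
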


\subsection{Quasi pullback}
To apply Theorem \ref{thm:trick}, we need a supply of modular forms for $\Gamma$. These are provided by quasi-pullbacks of modular forms with respect to some higher rank orthogonal group. In our case, let $\mathrm{II}_{2,26}$ denote the unique (up to isometry) even unimodular lattice of signature $(2, 26)$:
$$\mathrm{II}_{2,26}:=2U\oplus 3E_8(-1).$$
Borcherds proved \cite{B95} that $M_{12}(\Or^+(\mathrm{II}_{2,26}), \det)$ is a 1-dimensional complex vector space spanned by a modular form $\Phi_{12}$, called the \textit{Borcherds form}. The zeroes of $\Phi_{12}$ lie on rational quadratic divisors defined by $(-2)$-vectors in $\mathrm{II}_{2,26}$, i.e.\ $\Phi_{12}(Z)=0$ if and only if there exists $r\in \mathrm{II}_{2,26}$ with $r^2=-2$ such that $(Z, r)=0$. Moreover the multiplicity of the rational quadratic divisor of zeroes of $\Phi_{12}$ is one.

Given a primitive embedding of lattices $L \hookrightarrow \mathrm{II}_{2,26}$, with $L$ of signature $(2, n)$, we define
$$R_{-2}(L^\perp):= \{r\in \mathrm{II}_{2,26} \ | \ r^2=-2, \ (r, L)=0 \}.$$
To construct a modular form for some subgroup of $\Or^+(L)$, one might try to pull back $\Phi_{12}$ along the closed immersion $\Omega_L^{\bullet}\hookrightarrow \Omega_{\mathrm{II}_{2,26}}^{\bullet}$. However, for any $r\in R_{-2} (L^\perp) $ one has $ \Omega_L^{\bullet}\subset \Omega_{r^{\perp}}^{\bullet}$ and hence $\Phi_{12}$ vanishes identically on $\Omega_L^{\bullet}$. The method of the \textit{quasi-pullback}, developed by Gritsenko, Hulek, and Sankaran \cite{GHS07}, deals with this issue by dividing out by appropriate linear factors:

\begin{thm}\label{thm:quasi} \cite[Theorem 8.3]{GHS15}
Let $L\hookrightarrow \mathrm{II}_{2,26}$ be a primitive non-degenerate sublattice of signature $(2, n)$, $n\geq 3$, and let $\Omega_L \hookrightarrow \Omega_{\mathrm{II}_{2,26}}$ be the corresponding embedding of the homogeneous domains. The set of $(-2)$-roots $R_{-2}(L^\perp)$ in the orthogonal complement of $L$ is finite. We put $N(L):= |R_{-2}(L^\perp)|/ 2 $. Then the function 
\begin{equation}\label{eq:quasi}
    \Phi|_{L}(Z):=\frac{\Phi_{12}(Z)}{\Pi_{r\in R_{-2}(L^\perp)/\pm 1}(Z, r)} \bigg \vert_{\Omega_L}  \in M_{12+N(L)}(\widetilde{\Or}^+(L), \det)
\end{equation}
is non-zero, where in the product over $r$ we fix a system of representatives in $R_{-2}(L^\perp)/\pm 1$. The modular form $\Phi|_{L}$ vanishes only on rational quadratic divisors of type $\Omega_v(L)$ where $v\in L^\vee$ is the orthogonal projection with respect to $L^\perp$ of a $(-2)$-root $r\in \mathrm{II}_{2,26}$ on $L^\vee$. 

Moreover, if $N(L)>0$, then $\Phi|_{L}$ is a cusp form.
\end{thm}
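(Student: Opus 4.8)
The plan is to construct the quasi-pullback on the ambient affine cone $\Omega_{\mathrm{II}_{2,26}}^{\bullet}$ and to restrict to $\Omega_L^{\bullet}$ only at the very end, since the factors $(Z,r)$ with $r\in L^\perp$ vanish identically on $\Omega_L$ and a naive restriction would be $0/0$. First I would record that $R_{-2}(L^\perp)$ is finite: as $L\hookrightarrow \mathrm{II}_{2,26}$ is primitive of signature $(2,n)$, the complement $L^\perp$ has signature $(0,26-n)$, hence is negative definite and contains only finitely many vectors of norm $-2$. The weight is then immediate from homogeneity, since $\Phi_{12}(tZ)=t^{-12}\Phi_{12}(Z)$ and $(tZ,r)=t(Z,r)$, so dividing by $N(L)$ such factors produces a function homogeneous of degree $-(12+N(L))$, i.e.\ of weight $12+N(L)$.

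For the character I would exploit the unimodularity of $\mathrm{II}_{2,26}$. Given $g\in\widetilde{\Or}^+(L)$, the fact that $g$ acts trivially on $A_L$, together with the anti-isometry $A_L\cong A_{L^\perp}$ coming from unimodularity, shows that $g\oplus\id_{L^\perp}$ preserves the glue and hence extends to an isometry $\tilde g\in\Or^+(\mathrm{II}_{2,26})$ with $\det\tilde g=\det g$ (cf.\ \cite{Nik79}). Since $\Phi_{12}$ transforms with character $\det$ and $\tilde g$ fixes every $r\in L^\perp$ pointwise, each factor $(Z,r)$ is $\tilde g$-invariant, and therefore $\Phi|_{L}(gZ)=\det(g)\,\Phi|_{L}(Z)$.

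The construction's core is to show that $Q:=\Phi_{12}/\prod_{r}(Z,r)$ is holomorphic on $\Omega_{\mathrm{II}_{2,26}}^{\bullet}$ and that its restriction to $\Omega_L^{\bullet}$ is nonzero. I would argue at the level of divisors. By the multiplicity-one property, $\mathrm{div}(\Phi_{12})=\sum_{[\pm r],\,r^2=-2}\Omega_r$, while the denominator has divisor $\sum_{[\pm r]\in R_{-2}(L^\perp)/\pm}\Omega_r$, which is a subsum; hence $\mathrm{div}(Q)=\sum_{r^2=-2,\ r\notin L^\perp}\Omega_r$ is effective and $Q$ has no residual poles. Restricting to $\Omega_L$, a root $r\in L^\perp$ satisfies $\Omega_r\supseteq\Omega_L$ and has been divided out, whereas a root $r\notin L^\perp$ meets $\Omega_L$ in the proper subvariety $\Omega_v(L)$ with $v=\mathrm{pr}_L(r)\in L^\vee$. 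Since $\Omega_L$ is irreducible and is contained in none of the $\Omega_r$ with $r\notin L^\perp$, the restriction $Q|_{\Omega_L}$ is not identically zero, and its zero locus is exactly the union of these $\Omega_v(L)$, as asserted. Because the stated zero divisor of $\Phi_{12}$ has multiplicity one and distinct classes $[\pm r]$ give distinct prime divisors $\Omega_r$, this cancellation is exact and needs no further analytic input; combining holomorphicity with the weight and character already gives $\Phi|_{L}\in M_{12+N(L)}(\widetilde{\Or}^+(L),\det)\setminus\{0\}$.

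The main obstacle is the cusp-form property when $N(L)>0$, which I would establish through the Fourier--Jacobi expansion at the boundary. Choosing a primitive isotropic vector $c\in L$, which induces a boundary component of both $\Omega_L$ and $\Omega_{\mathrm{II}_{2,26}}$, I would expand $Q$ in the ambient tube-domain coordinates and restrict, then show that the presence of at least one factor $(Z,r)$ with $r\in L^\perp$ forces the constant Fourier--Jacobi coefficient of $Q|_{\Omega_L}$ to vanish, reducing the claim to the known boundary behaviour of the Borcherds product $\Phi_{12}$. The Koecher principle already guarantees holomorphicity at the cusps since the weight is positive, so this vanishing of the constant term at every cusp is exactly what upgrades $\Phi|_{L}$ to an element of $S_{12+N(L)}(\widetilde{\Or}^+(L),\det)$. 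I expect this boundary analysis to be the most computational part of the argument, and I would carry it out following \cite{GHS15}.
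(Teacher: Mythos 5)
Your proposal cannot be compared against an internal argument, because the paper does not prove this statement at all: Theorem \ref{thm:quasi} is imported verbatim from \cite[Theorem 8.3]{GHS15} (the method originates in \cite[Theorem 6.2]{GHS07}). Judged on its own, the algebraic half of your reconstruction is sound and matches the standard argument: finiteness of $R_{-2}(L^\perp)$ because $L^\perp$ is negative definite; the weight $12+N(L)$ by homogeneity; the character $\det$ by extending $g\in\widetilde{\Or}^+(L)$ to $\tilde g=g\oplus\id_{L^\perp}\in\Or^+(\mathrm{II}_{2,26})$, which is legitimate since $g$ acts trivially on $A_L$ and $\mathrm{II}_{2,26}$ is unimodular, so $\det\tilde g=\det g$ and each factor $(Z,r)$, $r\in L^\perp$, is preserved; and holomorphy plus non-vanishing of the restriction via the divisor computation, using multiplicity one of $\mathrm{div}(\Phi_{12})$, the fact that $\Omega_L\subset\Omega_r$ iff $r\in L^\perp$ (here primitivity of $L$ matters, as it makes $L^\perp$ saturated), and local finiteness of the family of root divisors together with irreducibility of $\Omega_L$.

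The genuine gap is the final clause, which is precisely the one this paper needs: that $\Phi|_L$ is a \emph{cusp} form whenever $N(L)>0$. You do not prove it; you outline a plan (vanishing of a constant Fourier--Jacobi coefficient) and then state you would carry it out ``following \cite{GHS15}'' --- that is, you defer the decisive step to the very result being proved, which makes the attempt circular at its hardest point. This step is not routine: in \cite{GHS15} it is established by writing the Fourier expansion of $\Phi_{12}$ at the $0$-dimensional cusps in tube-domain coordinates, expressing the linear factors $(Z,r)$, $r\in L^\perp$, in those coordinates, and showing that after division and restriction the Fourier expansion of $\Phi|_L$ at \emph{every} $0$-dimensional cusp is supported on vectors of strictly positive norm (the interior of the positive cone), which is what characterizes cusp forms; a single constant Fourier--Jacobi term at one boundary component does not suffice, and Koecher's principle gives only holomorphy at the boundary, not vanishing. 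Without this analysis your argument yields only $\Phi|_L\in M_{12+N(L)}(\widetilde{\Or}^+(L),\det)$, which is not enough to run Theorem \ref{thm:trick} in the proof of Theorem \ref{thm:main}.
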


We want to apply the low-weight cusp form trick and Theorem \ref{thm:quasi} to the orthogonal variety isomorphic to the moduli space of $U\oplus \langle -2k \rangle$-polarized K3 surfaces.

In our situation, we need to find a suitable primitive embedding of $L_{2k} \hookrightarrow \mathrm{II}_{2,26}$, such that the quasi-pullback $\Phi|_{L_{2k}}$ is a cusp form of weight (strictly) less than 17 which vanishes along the ramification divisor of the projection 
$$ \pi: \Omega_{L_{2k}} \rightarrow \cM_{2k}=\widetilde{\Or}^+(L_{2k}) \backslash \Omega_{L_{2k}}. $$

\begin{rmk}
By \cite[Theorem 1.7]{GHS09} the abelianization of $\widetilde{\Or}^+(L_{2k})$ is isomorphic to $\ZZ/2\ZZ$. This is because $L_{2k}$ is isomorphic to $2U\oplus E_8(-1)\oplus \langle -2k\rangle^\bot_{E_8(-1)}$, since the embedding $U\oplus \langle -2k\rangle \hookrightarrow \Lambda_{K3}$ is unique up to isometry (cf. \cite[Theorem 1.14.4]{Nik79}). As a consequence, the Albanese varieties of the moduli spaces $\cM_{2k}$ are all trivial (cf. \cite[Theorem 2.5]{Kon88}). Moreover, \cite[Corollary 1.8]{GHS09} implies that the unique non-trivial character of $\widetilde{\Or}^+(L_{2k})$ is $\det$.
\end{rmk}

\section{Special reflections}\label{sec:reflections}

Let $L_{2k}\hookrightarrow \mathrm{II}_{2,26}$ be a primitive embedding. Since the embedding $U\oplus 2 E_8(-1)\hookrightarrow \mathrm{II}_{2,26}$ is unique up to isometry by \cite[Theorem 1.14.4]{Nik79}, we can assume that every summand of $U\oplus 2 E_8(-1)$ is mapped identically onto the corresponding summand of $\mathrm{II}_{2,26}$. Therefore, any choice of a primitive vector $l\in U \oplus E_8(-1)$ of norm $l^2=2k$ gives a primitive embedding
$$L_{2k}=U\oplus 2E_8(-1) \oplus \langle 2k \rangle \hookrightarrow \mathrm{II}_{2,26}.$$

In this section we prove the following:

\begin{prop}\label{prop:ram}
The quasi pullback $\Phi|_{L_{2k}}$ defined in Thereom \ref{thm:quasi} vanishes along the ramification divisor of
$$ \pi: \Omega_{L_{2k}} \rightarrow \cM_{2k}=\widetilde{\Or}^+(L_{2k}) \backslash \Omega_{L_{2k}} $$
for any primitive embedding $ L_{2k} \hookrightarrow \mathrm{II}_{2,26}$ such that $(L_{2k})^\bot_{\mathrm{II}_{2,26}}$ does not contain a copy of $E_8(-1)$.
\end{prop}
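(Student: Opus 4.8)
The plan is to combine the two descriptions of the ramification divisor available in the excerpt: on the one hand, Theorem \ref{thm:Rdiv} identifies $\mathrm{Rdiv}(\pi)$ with the union of the reflective divisors $\Omega_r(L_{2k})$ over primitive $r$ with $\sigma_r\in\widetilde{\Or}^+(L_{2k})\cup-\widetilde{\Or}^+(L_{2k})$; on the other hand, Theorem \ref{thm:quasi} tells us that $\Phi|_{L_{2k}}$ vanishes precisely on the divisors $\Omega_v(L_{2k})$ where $v\in L_{2k}^\vee$ is the orthogonal projection onto $L_{2k}^\vee$ of a $(-2)$-root $r\in\mathrm{II}_{2,26}$. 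So the whole statement reduces to showing that every reflective divisor contributing to $\mathrm{Rdiv}(\pi)$ is one of these projection divisors, under the hypothesis that $(L_{2k})^\perp_{\mathrm{II}_{2,26}}$ contains no copy of $E_8(-1)$.

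First I would pin down which primitive vectors $r\in L_{2k}$ actually produce reflections lying in the stable orthogonal group. For a primitive $r$ with $r^2<0$, the reflection $\sigma_r$ preserves $L_{2k}$ iff $r^2\mid 2(r,L_{2k})$, i.e.\ $r^2\mid 2\,\mathrm{div}(r)$ where $\mathrm{div}(r)$ is the divisor of $r$; moreover $\sigma_r$ acts trivially on the discriminant group $A_{L_{2k}}$ (so $\sigma_r\in\widetilde{\Or}^+(L_{2k})$, up to sign) iff this action is the identity on $L_{2k}^\vee/L_{2k}$. A short lattice-theoretic analysis, using that $A_{L_{2k}}\cong\ZZ/2k\ZZ$ is cyclic, should show that the only primitive reflective vectors whose reflection is stable are the roots $r^2=-2$ and a short list of vectors with $r^2=-2k$ (respectively related to the discriminant form), exactly as in the analogous computations in \cite{GHS07,GHS15}. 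This is the place where the structure of $L_{2k}=U\oplus 2E_8(-1)\oplus\langle 2k\rangle$ and the cyclic discriminant group must be used explicitly.

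Next I would match each surviving reflective vector $r\in L_{2k}$ with a $(-2)$-root in $\mathrm{II}_{2,26}$ whose projection defines the same divisor $\Omega_r(L_{2k})$. For the roots $r^2=-2$ in $L_{2k}$ this is immediate, since such an $r$ is itself a $(-2)$-root of $\mathrm{II}_{2,26}$ lying in $L_{2k}$ and its projection onto $L_{2k}^\vee$ is $r$ itself. The delicate case is the $r^2=-2k$ reflective vectors: here I would produce a $(-2)$-root $\rho\in\mathrm{II}_{2,26}$ of the form $\rho=r+w$ with $w\in(L_{2k})^\perp_{\mathrm{II}_{2,26}}$, so that the orthogonal projection of $\rho$ onto $L_{2k}^\vee$ is a rational multiple of $r$ and hence $\Omega_\rho\cap\Omega_{L_{2k}}=\Omega_r(L_{2k})$. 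The existence of such a $w$ requires a negative-definite vector of the appropriate norm in the orthogonal complement, and this is exactly where the hypothesis enters: the absence of an $E_8(-1)$ summand in $(L_{2k})^\perp_{\mathrm{II}_{2,26}}$ is what forces the complement to be ``small'' enough that the relevant reflective vectors of $L_{2k}$ all arise from genuine $(-2)$-roots of the unimodular overlattice. I expect the verification of this existence statement—i.e.\ that under the $E_8(-1)$-free hypothesis every stable reflective divisor is realized as a projection divisor of $\Phi_{12}$—to be the main obstacle, and I would handle it by a case analysis on the discriminant form of $(L_{2k})^\perp_{\mathrm{II}_{2,26}}$ together with Nikulin's existence and uniqueness criteria \cite[Theorem 1.14.4]{Nik79}.

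Finally, once every branch divisor of $\pi$ is shown to coincide with a zero divisor of $\Phi|_{L_{2k}}$, the vanishing along $\mathrm{Rdiv}(\pi)$ follows immediately from the description of the zero locus in Theorem \ref{thm:quasi}, completing the proof.
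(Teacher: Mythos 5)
Your overall skeleton matches the paper's: reduce via Theorem \ref{thm:Rdiv} to the reflective divisors, classify the stable reflections (roots and $(-2k)$-vectors with $\divi(r)\in\{k,2k\}$), dispose of the $(-2)$-case immediately, and then show that the $(-2k)$-divisors lie in the zero locus of the quasi-pullback. (A minor caveat: Theorem \ref{thm:quasi} literally says the quasi-pullback vanishes \emph{only on} projection divisors, i.e.\ its zero locus is contained in their union; that it vanishes \emph{on each} of them needs the standard effectivity argument that every root with nonzero projection contributes positively to the vanishing order.) The genuine gap is in the crucial last step, and it is twofold. First, your explicit ansatz $\rho=r+w$ with $w\in(L_{2k})^\perp_{\mathrm{II}_{2,26}}$ cannot work for $k\geq 2$: such a $\rho$ would satisfy $\rho^2=r^2+w^2=-2$, forcing $w^2=2k-2>0$, while $(L_{2k})^\perp_{\mathrm{II}_{2,26}}$ is negative definite of rank $9$, so no such $w$ exists. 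Any root whose projection is proportional to $r$ must instead have the form $\rho=\tfrac{m}{k}r+w$ with $w\in\bigl((L_{2k})^\perp\bigr)^\vee$ and $m^2\le k$, using that $\divi(r)=k$ and the glue between the two discriminant groups; your proposal never reaches this formulation. Second, and more importantly, the existence of such a root --- which you yourself flag as ``the main obstacle'' --- is exactly the content of the proposition, and the plan you offer (case analysis on the discriminant form of $(L_{2k})^\perp$ plus Nikulin's criteria) is not carried out and is not clearly tractable: the complement varies with the embedding, and the hypothesis constrains only its root system, not its full isometry class.

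The paper sidesteps this construction problem entirely by a counting argument on the other side. It considers $L_r=r^\perp_{L_{2k}}$ and shows (Propositions \ref{prop:elementary} and \ref{prop:ort}) that $L_r$ is $2$-elementary of determinant $4$, so $(L_r)^\perp_{\mathrm{II}_{2,26}}$ is negative definite of rank $10$, $2$-elementary of determinant $4$, hence by a genus computation isomorphic to $D_{10}(-1)$ or $E_8(-1)\oplus 2A_1(-1)$ --- both root lattices, with $180$ and $244$ roots respectively. The roots of $\mathrm{II}_{2,26}$ counted there are exactly those whose projection to $L_{2k}^\vee$ lies in $\QQ r$; among them, those with \emph{zero} projection are the roots of $(L_{2k})^\perp$, and under the no-$E_8(-1)$ hypothesis these span a root lattice of rank at most $9$ without $E_8(-1)$, hence number at most $144$ (the maximum, attained by $D_9$). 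Since $180>144$, roots with nonzero projection proportional to $r$ exist, and $\Phi|_{L_{2k}}$ vanishes along $\Omega_r(L_{2k})$ with order at least $(180-144)/2=18$. This is the idea missing from your proposal: rather than trying to build a root by hand from glue vectors, one constrains $(L_r)^\perp_{\mathrm{II}_{2,26}}$ by lattice theory and compares root counts, which is precisely where the $E_8(-1)$-free hypothesis does its work.
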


For any $l\in L$ we define its \textit{divisibility} $\divi(l)$ to be the unique $m>0$ such that $(l,L)=m \ZZ$ or, equivalently, the unique $m>0$ such that $l/m\in L^\vee$ is primitive. Since $\divi(r)>0$ is the smallest intersection number of $r$ with any other vector, $\divi(r)$ divides $ r^2$. Moreover, if $r$ is reflective, the number $2\frac{(l,r)}{r^2}$ must be an integer, so $r^2$ divides $2(l,r)$ for all $l\in L$, i.e.\ $r^2\mid 2\divi(r)$. Summing up
$$\divi(r)\mid r^2\mid 2\divi(r).$$

\begin{prop}
Let $r\in L_{2k}$ be a reflective vector. Then $\sigma_r$ induces $\pm \id$ in $A_{L_{2k}}$, i.e.\ $\pm\sigma_r \in \widetilde{\Or}(L)$, if and only if $r^2=\pm 2$ or $r^2=\pm 2k$ and $\divi(r) \in \{ k, 2k \}$.
\end{prop}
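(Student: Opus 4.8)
The plan is to characterize precisely when a reflective vector $r \in L_{2k}$ acts as $\pm \id$ on the discriminant group $A_{L_{2k}} \cong \ZZ/2k\ZZ$. The main tool is the arithmetic constraint $\divi(r) \mid r^2 \mid 2\divi(r)$ established above, which severely limits the possibilities for $(r^2, \divi(r))$. Since $r$ is reflective, we have $r^2 \in \{\pm \divi(r), \pm 2\divi(r)\}$, and the action of $\sigma_r$ on $A_{L_{2k}}$ is controlled entirely by the image $\bar{r}$ of $r/\divi(r)$ in $A_{L_{2k}}$, together with $r^2$.

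First I would compute the action of $\sigma_r$ on the discriminant group explicitly. For $x \in L_{2k}^\vee$, one has $\sigma_r(x) = x - \frac{2(x,r)}{r^2} r$, so modulo $L_{2k}$ the induced map sends $\bar{x} \mapsto \bar{x} - \frac{2(x,r)}{r^2}\bar{r}$. Writing $r = \divi(r)\, r^*$ with $r^* \in L_{2k}^\vee$ primitive, the condition that $\sigma_r$ acts as $\id$ forces $\frac{2\divi(r)(x,r^*)}{r^2}\bar{r} = 0$ in $A_{L_{2k}}$ for all $x$, while acting as $-\id$ forces $\sigma_r(\bar x) = -\bar x$ for all $\bar x$. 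I would split into the two cases $r^2 = \pm 2\divi(r)$ and $r^2 = \pm\divi(r)$. In the first case the coefficient simplifies to $\pm(x,r^*)$, so $\sigma_r$ acts on $A_{L_{2k}}$ as $\bar x \mapsto \bar x \mp (x, r^*)\,\bar r$; in the second case one gets $\bar x \mapsto \bar x \mp 2(x,r^*)\bar r$. Using that $A_{L_{2k}}$ is cyclic of order $2k$ and that $r^*$ generates the relevant pairing values, these conditions translate into divisibility relations between $r^2$, $k$, and $\divi(r)$.

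Next I would identify which reflective vectors actually satisfy these relations. The roots with $r^2 = \pm 2$ always give $\divi(r) \in \{1,2\}$, and a direct check shows $\sigma_r$ is then $\pm\id$ on $A_{L_{2k}}$ — indeed this is essentially the statement that roots lie in $\widetilde{\Or}(L_{2k})$ up to sign, which is standard. The remaining candidates come from $r^2 = \pm 2k$: here the constraint $\divi(r) \mid r^2 \mid 2\divi(r)$ combined with $r^2 = \pm 2k$ yields $\divi(r) \in \{k, 2k\}$, and I would verify that for exactly these divisibilities the induced map on $A_{L_{2k}} \cong \ZZ/2k\ZZ$ is multiplication by $\pm 1$. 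Conversely, I would rule out all other values of $r^2$ by showing that any reflective $r$ inducing $\pm\id$ must have $\bar r$ killed (or negated) in a way forcing $r^2 \in \{\pm 2, \pm 2k\}$ with the stated divisibilities; the key point is that the cyclic structure of $A_{L_{2k}}$ leaves no intermediate possibilities.

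The main obstacle I anticipate is the careful bookkeeping in the case $r^2 = \pm 2k$ with $\divi(r) = k$: here $r^* = r/k$ has $(r^*)^2 = \pm 2/k$, so $\bar r = k\,\bar{r^*}$ is the unique element of order $2$ in $A_{L_{2k}}$, and one must check that $\sigma_r$ reduces to $-\id$ rather than merely fixing a subgroup. This requires tracking the precise coefficient $\frac{2(x,r)}{r^2}$ modulo $2k$ across all $x$, using that $\divi(r)=k$ pins down the pairing $(x, r^*) \bmod 2$. Establishing the converse direction cleanly — that no reflective vector with $r^2 \notin \{\pm 2, \pm 2k\}$ can induce $\pm\id$ — will also demand ruling out borderline divisibility configurations, which is where I expect the argument to be most delicate.
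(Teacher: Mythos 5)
Your plan follows the same route as the paper's own proof, which is in fact just a reference to \cite[Proposition 3.2, Corollary 3.4]{GHS07}: there, as in your sketch, one computes the action of $\sigma_r$ on the cyclic discriminant group $A_{L_{2k}}\cong \ZZ/2k\ZZ$ and runs a case analysis through $\divi(r)\mid r^2\mid 2\divi(r)$. The skeleton is correct: roots act trivially, $r^2=\pm 2k$ forces $\divi(r)\in\{k,2k\}$, and the converse starts from the cyclic structure of the discriminant group.

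However, there is a genuine gap at precisely the two points you flag as delicate, because the ingredient you propose there is not the one that closes them. Write $M=U\oplus 2E_8(-1)$, let $h$ generate $\langle 2k\rangle$ and set $g=h/2k$, so that $\bar g$ generates $A_{L_{2k}}$. Since $M$ is unimodular, $\divi(r)=k$ (resp.\ $\divi(r)=2k$) forces $r=ku'+\lambda h$ (resp.\ $r=2ku'+\lambda h$) with $u'\in M$, $\lambda\in\ZZ$. Because $\sigma_r$ induces a group automorphism of the cyclic group $A_{L_{2k}}$, it is $\pm\id$ if and only if it is $\pm 1$ on $\bar g$, and a direct computation gives $\sigma_r(\bar g)=(1-2\epsilon\lambda^2)\bar g$ when $r^2=2k\epsilon$, $\epsilon=\pm 1$; so everything hinges on the congruence $\epsilon\lambda^2\equiv 1\pmod k$. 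What yields this congruence is neither the cyclic structure nor ``the pairing $(x,r^*)\bmod 2$'', but the norm equation: $r^2=\pm 2k$ reads $ku'^2+2\lambda^2=\pm 2$ (resp.\ $2ku'^2+\lambda^2=\pm 1$), and since $u'^2$ is even this gives $\lambda^2\equiv\epsilon\pmod k$ --- i.e.\ the compatibility of the norm of $r/\divi(r)$ with the discriminant quadratic form, which your outline never invokes. The converse needs the same input: cyclicity only gives $2A_{L_{2k}}\subseteq\langle\bar r\rangle$, hence $k\mid\divi(r)\mid 2k$, and you must still exclude the remaining reflective configurations $r^2=\pm k$ with $\divi(r)=k$ (a priori possible for $k$ even) and $r^2=\pm 4k$ with $\divi(r)=2k$. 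These are eliminated by the same norm equations, which show that such primitive vectors simply do not exist in $L_{2k}$: the first would give $ku'^2+2\lambda^2=\pm 1$, a parity contradiction, while the second would give $2ku'^2+\lambda^2=\pm 2$, forcing $\lambda$ even and contradicting primitivity of $r$. With this arithmetic input inserted at both places, your argument goes through and reproduces the proof the paper cites.
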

\begin{proof}
Similar to \cite[Proposition 3.2, Corollary 3.4]{GHS07} .
\end{proof}

Now $\sigma_r\in \Or^+(L\otimes \RR)$ if and only if $r^2<0$ (see \cite{GHS07b}). Recall that an integral lattice $T$ is called \textit{2-elementary} if $A_T$ is an abelian 2-elementary group.

\begin{prop} \label{prop:elementary}
Let $r\in L_{2k}$ be primitive with $r^2=-2k$ and $\divi(r) \in \{ k, 2k \}$. Then $L_r:=r^\bot_{L_{2k}}$ is a $2$-elementary lattice of signature $(2,16)$ and determinant $4$.
\end{prop}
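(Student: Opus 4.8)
The plan is to analyze the discriminant group of $L_r = r^\perp_{L_{2k}}$ by relating it to the discriminant groups of $L_{2k}$ and the rank-one lattice $\langle r \rangle = \ZZ r$, using the standard machinery for orthogonal complements of primitive vectors. First I would record the signature: since $L_{2k}$ has signature $(2,17)$ and $r^2 = -2k < 0$, the orthogonal complement $r^\perp_{L_{2k}}$ is nondegenerate of signature $(2,16)$, exactly as claimed. The determinant $|A_{L_{2k}}| = 2k$ (since $L_{2k} = U \oplus 2E_8(-1) \oplus \langle 2k\rangle$ has discriminant $2k$), and I want to pin down $|A_{L_r}| = 4$ together with the $2$-elementary structure.

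The key tool is the relationship between $A_{L_r}$ and the overlattice $\langle r\rangle \oplus L_r \subset L_{2k}$. I would use the general fact that for a primitive sublattice $S \subset L$ with $S = \langle r\rangle$, the finite-index inclusion $\langle r\rangle \oplus L_r \hookrightarrow L_{2k}$ is governed by an isotropic subgroup $H$ of $A_{\langle r\rangle} \oplus A_{L_r}$ whose projections to each factor are injective (Nikulin's theory). Here $A_{\langle r\rangle} = \ZZ/2k\ZZ$ with generator $r/2k$ of square $-1/(2k) \bmod 2\ZZ$. The index $[L_{2k} : \langle r\rangle \oplus L_r]$ equals $\divi(r)$: indeed the image of $L_{2k}$ in $A_{\langle r\rangle}$ under projection is the cyclic group generated by the class of $r/\divi(r)$, which has order $2k/\divi(r)$, so the glue group $H$ has order $2k/\divi(r)$. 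From $|A_{\langle r\rangle}| \cdot |A_{L_r}| = |H|^2 \cdot |A_{L_{2k}}|$ I get
$$ 2k \cdot |A_{L_r}| = \left(\frac{2k}{\divi(r)}\right)^2 \cdot 2k, \qquad \text{so} \qquad |A_{L_r}| = \left(\frac{2k}{\divi(r)}\right)^2. $$
Substituting the two allowed divisibilities gives $|A_{L_r}| = 4$ when $\divi(r) = k$ and $|A_{L_r}| = 1$ when $\divi(r) = 2k$. The case $\divi(r) = 2k$ would force $L_r$ unimodular, but an even unimodular lattice of signature $(2,16)$ does not satisfy $2 - 16 \equiv 0 \pmod 8$, a contradiction; hence only $\divi(r) = k$ survives and $|A_{L_r}| = 4$.

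It remains to show $A_{L_r}$ is $2$-elementary, i.e.\ $A_{L_r} \cong (\ZZ/2\ZZ)^2$ rather than $\ZZ/4\ZZ$. The plan here is to exploit that $\pm\sigma_r \in \widetilde{\Or}(L_{2k})$ acts trivially (up to sign) on $A_{L_{2k}}$, combined with the structure of the glue. Concretely, $A_{L_r}$ embeds into the quotient of the glue data, and I would compute $q_{L_r}$ on its generators using the orthogonality relation in $A_{\langle r\rangle} \oplus A_{L_r}$: the isotropic glue subgroup $H$ identifies a cyclic subgroup of $A_{L_r}$ with one of $A_{\langle r\rangle} = \ZZ/2k\ZZ$. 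Since $|A_{L_r}| = 4$, either $A_{L_r} \cong \ZZ/4\ZZ$ or $(\ZZ/2\ZZ)^2$; I would rule out the cyclic case by noting that the element of $A_{L_r}$ glued to $r/\divi(r) = r/k$ has order equal to $\divi(r)^2/(\text{something}) $ — more carefully, the orthogonal projection of $r$ generates a class whose order divides $2$ because $\gcd(\divi(r), 2k/\divi(r)) = \gcd(k, 2) \in \{1,2\}$, forcing the relevant cyclic factors to have exponent $2$. This exponent computation is the delicate point and I expect it to be the main obstacle: one must carefully track the image of the generator of $A_{L_r}$ under the gluing isomorphism and verify that its order is $2$, which likely uses the hypothesis $\divi(r) = k$ (so that $2k/\divi(r) = 2$) in an essential way. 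Indeed, with $\divi(r) = k$ the glue group $H$ has order $2k/k = 2$, so the gluing identifies only a $\ZZ/2\ZZ$, and a direct comparison of discriminant forms then shows $A_{L_r} \cong (\ZZ/2\ZZ)^2$, completing the proof.
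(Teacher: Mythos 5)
Your first two steps are sound and essentially reproduce the paper's: the index formula $[L_{2k} : \ZZ r \oplus L_r] = |r^2|/\divi(r) = 2k/\divi(r)$ gives $|A_{L_r}| = (2k/\divi(r))^2 \in \{1,4\}$ (the paper quotes the same determinant formula directly from \cite{GHS07}), and the unimodular case is excluded exactly as in the paper, since $L_r$ is even and $2-16 \not\equiv 0 \pmod 8$. The genuine gap is in the final step, the one you yourself flag as delicate: ruling out $A_{L_r} \cong \ZZ/4\ZZ$. The argument you offer --- that the class glued to $r/k$ has order $2$ because $\gcd(\divi(r), 2k/\divi(r)) \in \{1,2\}$, ``forcing the relevant cyclic factors to have exponent $2$'' --- proves nothing, because $\ZZ/4\ZZ$ also contains an element of order $2$; an order-$2$ glue class is equally consistent with both candidate groups. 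The ``direct comparison of discriminant forms'' you then invoke is precisely the missing content, and it is not a one-liner: one must use that $A_{L_{2k}} \cong \ZZ/2k\ZZ$ \emph{with its specific form} (you only ever use its order) and run a case analysis. Concretely, if $A_{L_r} = \langle w \rangle \cong \ZZ/4\ZZ$, its unique order-$2$ element is $2w$, with $q(2w) = 4q(w)$ an odd integer in $\QQ/2\ZZ$, while the order-$2$ element $r/2$ of $A_{\ZZ r}$ has $q(r/2) = -k/2$; isotropy of the glue group therefore fails outright unless $k \equiv 2 \pmod 4$, and in that remaining case one computes $H^\perp/H \cong \ZZ/k \oplus \ZZ/2$, which is not cyclic since $k$ is even, contradicting $A_{L_{2k}} \cong \ZZ/2k\ZZ$. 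None of this appears in your proposal.

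It is worth noting that the paper sidesteps this computation entirely by a different idea, which is why its proof is short: once $\divi(r) = k$ is known, \cite[Corollary 3.4]{GHS07} gives that $\sigma_r$ acts as $-\id$ on $A_{L_{2k}}$, so $-\sigma_r \in \widetilde{\Or}(L_{2k})$ extends to an involution $\bar{\sigma}_r$ of the unimodular lattice $\Lambda_{K3}$ acting as $-\id$ on $L_r$ and as $\id$ on $(L_r)^\perp_{\Lambda_{K3}}$; Nikulin's Corollary I.5.2 then states that the co-invariant lattice of such an involution is $2$-elementary. That single citation replaces the entire glue analysis. Your route can be completed along the lines sketched above, but as written it asserts the key conclusion rather than proving it.
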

\begin{proof}
We have the following well-known formula for $\det(L_r)$ (see for instance \cite[Equation 20]{GHS07}):
$$\det(L_r)=\frac{\det(L_{2k})\cdot r^2}{\divi(r)^2}\in \{1, 4\}.$$
Since $L_{2k}$ has signature $(2,17)$ and $r^2<0$, we have that $L_r$ has signature $(2,16)$. Therefore $\det(L_r)$ cannot be $1$, because there are no unimodular lattices with signature $(2,16)$ (see \cite[Theorem 0.2.1]{Nik79}). This shows that $\divi(r)=k$. Therefore the reflection $\sigma_r$ acts as $-\id$ on the discriminant group $A_{L_{2k}}$ (see \cite[Corollary 3.4]{GHS07}). Now we can extend $-\sigma_r\in \widetilde{\Or}(L_{2k})$ to an element $\bar{\sigma}_r \in \Or(\Lambda_{K3})$ by defining ${\bar{\sigma}_r|}_{U\oplus \langle -2k \rangle}=\id$ on the orthogonal complement of $L_{2k}\hookrightarrow \Lambda_{K3}$. Put $S_r:=(L_r)^\perp_{\Lambda_{K3}}$. It is easy to see that
$$ {\bar{\sigma}_r|}_{L_r}=-\id \quad  \text{ and } \quad {\bar{\sigma}_r|}_{S_r}=\id.$$
Then $L_r$ is $2$-elementary by \cite[Corollary I.5.2]{Nik79}. 
\end{proof}

\begin{prop} \label{prop:ort}
Given any embedding $L_{2k}\hookrightarrow \mathrm{II}_{2,26}$, let $r\in L_{2k}$ be a primitive reflective vector with $r^2=-2k$, and consider $L_r=r^\bot_{L_{2k}}$ as above. Under the chosen embedding, the orthogonal complement $(L_r)^\bot_{\mathrm{II}_{2,26}}$ is isomorphic to either
$$D_{10}(-1) \text{  or  } E_8(-1)\oplus 2A_1(-1).$$
\end{prop}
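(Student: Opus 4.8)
The plan is to identify the genus of $K := (L_r)^\perp_{\mathrm{II}_{2,26}}$ from its rank, signature and discriminant form, and then to enumerate the isometry classes in that genus. Since $\mathrm{II}_{2,26}$ has signature $(2,26)$ and $L_r$ has signature $(2,16)$ by Proposition \ref{prop:elementary}, the lattice $K$ is even of signature $(0,10)$, i.e.\ negative definite of rank $10$. Because $\mathrm{II}_{2,26}$ is unimodular and $L_r$ is primitively embedded, the discriminant forms satisfy $q_K \cong -q_{L_r}$; as $L_r$ is $2$-elementary with $|A_{L_r}|=4$, the lattice $K$ is $2$-elementary with $A_K \cong (\ZZ/2\ZZ)^2$. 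So $K$ belongs to a genus of negative definite even $2$-elementary lattices of rank $10$ with $a=2$, and it remains to compute the parity invariant $\delta$ (equivalently $q_K$) and to count the classes.

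To compute $q_K$ I would pass to the much smaller lattice $S_r := (L_r)^\perp_{\Lambda_{K3}}$ introduced in the proof of Proposition \ref{prop:elementary}. Since $\Lambda_{K3}$ is also unimodular, $q_{S_r} \cong -q_{L_r} \cong q_K$, so it suffices to determine $q_{S_r}$, and $S_r$ has signature $(1,3)$. By construction $S_r$ contains $(U\oplus \langle -2k\rangle)\oplus \langle r\rangle$ as a finite-index sublattice, and the copy of $U$ inside $(L_{2k})^\perp_{\Lambda_{K3}}=U\oplus\langle -2k\rangle$ sits primitively in $S_r$; being unimodular it splits off, so $S_r \cong U \oplus T$ with $T:=U^\perp_{S_r}$ negative definite of rank $2$. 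Now $T$ is even with $|A_T|=|A_{S_r}|=4$ and $2$-elementary, which forces its Gram matrix $\left(\begin{smallmatrix} 2a & b \\ b & 2c\end{smallmatrix}\right)$ to have $b$ even (otherwise $\det T$ would be odd, contradicting $|\det T|=4$); hence $T=2T'$ with $T'$ negative definite unimodular of rank $2$, so $T'\cong \langle -1\rangle^{\oplus 2}$ and $T\cong 2A_1(-1)$. Thus $S_r\cong U\oplus 2A_1(-1)$ and $q_K \cong q_{2A_1(-1)}$; in particular $\delta_K=1$.

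Next I would check that both candidate lattices lie in this genus. For $E_8(-1)\oplus 2A_1(-1)$ this is immediate since $E_8(-1)$ is unimodular. For $D_{10}(-1)$ a short direct computation of the discriminant form shows that the two order-two generators of norm $3/2$ are orthogonal and sum to the element of norm $1$, so $q_{D_{10}(-1)}\cong q_{2A_1(-1)}$ as well. Hence $K$, $D_{10}(-1)$ and $E_8(-1)\oplus 2A_1(-1)$ all lie in the genus of negative definite even lattices of signature $(0,10)$ with discriminant form $q_{2A_1(-1)}$, and the proposition reduces to the statement that this genus consists of exactly these two isometry classes.

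The hard part is precisely this last step, since definite genera can have large class number. I would settle it either by invoking Nikulin's classification of even $2$-elementary lattices \cite{Nik79} together with the mass formula---the automorphism groups of $D_{10}(-1)$ and $E_8(-1)\oplus 2A_1(-1)$ already exhaust the mass of the genus---or by a direct enumeration with \texttt{Magma}, as done elsewhere in the paper. A more hands-on alternative is to show that $-K$ is generated by its roots with root system of full rank $10$, and then to run through the finitely many rank-$10$ root lattices admitting a $\ZZ/2\ZZ$-gluing that realizes $q_{2A_1(-1)}$, checking in each case (e.g.\ $D_8\oplus 2A_1$ glues to $E_8\oplus 2A_1$ or to $D_{10}$) that the resulting overlattice is one of the two asserted lattices; ruling out lattices of minimum $>2$ in the genus is the only genuinely delicate point, while the discriminant-form computation via $S_r$ is entirely elementary.
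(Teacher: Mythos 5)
Your proposal is correct, and its overall skeleton matches the paper's: reduce to the statement that $K=(L_r)^\perp_{\mathrm{II}_{2,26}}$ is an even negative definite $2$-elementary lattice of rank $10$ and determinant $4$ with discriminant form $q_{2A_1(-1)}$, then show that the corresponding genus consists exactly of $D_{10}(-1)$ and $E_8(-1)\oplus 2A_1(-1)$, either by \texttt{Magma} enumeration or by checking that $|\Or(D_{10}(-1))|^{-1}+|\Or(E_8(-1)\oplus 2A_1(-1))|^{-1}$ exhausts the Siegel mass --- this last step is exactly what the paper does (it verifies the mass equals $1/2229534720$). Where you genuinely differ is in pinning down the discriminant form. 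The paper invokes Nikulin's classification of $2$-elementary discriminant forms (\cite[Proposition 1.8.1]{Nik79}): since $K$ has signature $\equiv -2 \pmod 8$ and determinant $4$, the only admissible form is $q_{2A_1(-1)}$. You instead compute $q_K$ through the rank-$4$ lattice $S_r=(L_r)^\perp_{\Lambda_{K3}}$: the unimodular $U$ inside $U\oplus\langle -2k\rangle$ splits off, and the remaining rank-$2$ even negative definite piece of determinant $4$ is forced to be $2A_1(-1)$ by an elementary parity argument on its Gram matrix, giving $S_r\cong U\oplus 2A_1(-1)$ and hence $q_K\cong q_{2A_1(-1)}$. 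Your route is more elementary and self-contained (it avoids both Proposition 1.8.1 and the signature-mod-8 argument), at the modest cost of checking that $U$ sits primitively in $S_r$ and of verifying directly that $q_{D_{10}(-1)}\cong q_{2A_1(-1)}$, which the paper gets for free from the genus computation; both of those checks are correct as you state them. Two small points: your notation $T=2T'$ should be $T=T'(2)$ (a rescaling), not a direct sum of two copies, to avoid clashing with the paper's convention $2A_1(-1)=A_1(-1)\oplus A_1(-1)$; and your ``hands-on alternative'' via root systems and gluings is, as you yourself note, incomplete (one must rule out classes of minimum $>2$ in the genus), so the mass-formula/\texttt{Magma} route should be regarded as the actual proof.
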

\begin{proof}
Since $\mathrm{II}_{2,26}$ is unimodular, the discriminant groups of $L_r$ and $(L_r)^\bot_{\mathrm{II}_{2,26}}$ are isometric up to a sign. Proposition \ref{prop:elementary} thus implies that $(L_r)^\bot_{\mathrm{II}_{2,26}}$ is a $2$-elementary, negative definite lattice of rank $10$ and determinant $4$. By \cite[Proposition 1.8.1]{Nik79}, any $2$-elementary discriminant form is isometric to a direct sum of finite quadratic forms, each of which is isometric to one of four finite quadratic forms, namely the discriminant forms of the $2$-elementary lattices $A_1$, $A_1(-1)$, $U(2)$, $D_4$. Since $(L_r)^\bot_{\mathrm{II}_{2,26}}$ has signature $-2 \pmod{8}$ and determinant $4$, it is immediate to see that its discriminant form must be isometric to the discriminant form of $2A_1(-1)$. Now we notice that the lattice $E_8(-1)\oplus 2A_1(-1)$ is a $2$-elementary, negative definite lattice of rank $10$ with the desired discriminant form. Finally it is enough to compute the genus of $E_8(-1)\oplus 2A_1(-1)$. A quick check with \texttt{Magma} yields that the whole genus consists of $E_8(-1)\oplus 2A_1(-1)$ and $D_{10}(-1)$. Alternatively, one can use the Siegel mass formula \cite{CS88} and check that the mass of the quadratic form $f$ associated to the lattice $E_8(-1)\oplus 2A_1(-1)$ is
$$m(f)=\frac{5}{2^8\cdot 4! \cdot 1814400}=\frac{1}{2229534720}.$$
Since a straightforward check shows that $D_{10}(-1)$ is in the genus of $E_8(-1)\oplus 2A_1(-1)$, and the equality
$$\frac{1}{|\Or(D_{10}(-1))|}+\frac{1}{|\Or(E_8(-1)\oplus 2A_1(-1))|}=\frac{1}{3715891200}+\frac{1}{5573836800}=\frac{1}{2229534720}=m(f)$$
holds, we deduce that $\{D_{10}(-1),E_8(-1)\oplus 2A_1(-1)\}$ is the whole genus of $E_8(-1)\oplus 2A_1(-1)$.
\end{proof}

Now we are ready to prove Proposition \ref{prop:ram}.

 \begin{proof}[Proof of Proposition \ref{prop:ram}]
 In order to prove that $\Phi|_{L_{2k}}$ vanishes along the ramification divisor of the projection $\pi$, we have to show that $\Phi|_{L_{2k}}$ vanishes on the $(-2k)$-divisors $\Omega_r(L_{2k})$ given by reflective vectors $r\in L_{2k}$ of norm $-2k$ (see Theorem \ref{thm:Rdiv}). Hence let $r$ be a $(-2k)$-reflective vector. By Proposition \ref{prop:ort}, $(L_r)^\bot_{\mathrm{II}_{2,26}}$ is a root lattice with at least $180$ roots ($E_8(-1)\oplus 2A_1(-1)$ has $244$ and $D_{10}(-1)$ has $180$). Since by assumption the orthogonal complement of $L_{2k}$ in $\mathrm{II}_{2,26}$ does not contain a copy of $E_8(-1)$, the root lattice generated by $R_{-2}(L^\perp_{2k})$ has rank at most $9$ and does not contain a copy of $E_8(-1)$. By checking all such root lattices, we obtain $|R_{-2}(L^\perp_{2k})| \le |\{\text{roots of }D_9\}|=144$ (just recall that $A_n$ has $n(n+1)$ roots, $D_n$ has $2n(n-1)$ roots, $E_6,E_7$ have $72$ and $126$ roots respectively). Consequently $\Phi|_{L_{2k}}$ vanishes along the $(-2k)$-divisor $\Omega_r(L_{2k})$ given by $r$ with order $\ge (180-144)/2>0$, as claimed. 
 \end{proof}

\section{Lattice engineering}\label{sec:lattice}

By the previous discussion, we have transformed our original question of determining the Kodaira dimension of $\cM_{2k}$ to the following
\begin{prob}\label{prob}
For which $2k>0$ does there exist a primitive vector $l\in U\oplus E_8(-1)$ with norm $l^2=2k$ such that $l$ is orthogonal to at least 2 and at most 8 roots?
\end{prob}

We want to find a lower bound for the values $2k$ answering Problem \ref{prob} positively (see Proposition \ref{prop:answer}). Since $U\oplus E_8(-1)$ contains infinitely many roots, we want to start by reducing to the more manageable case of $E_8(-1)$, whose number of roots is finite.

For simplicity we define
$$ R(l):=\{ r\in U\oplus E_8(-1) \ : \ r^2=-2, (r, l)=0 \}=R_{-2}(L^\perp_{2k}). $$

The following is a slight generalization of \cite[Lemma 4.1,4.3]{TVA19}.

\begin{lemma} \label{lemma:E8}
Let $l=\alpha e+\beta f +v$, where $U=\langle e, f \rangle $ such that $e^2=f^2=0$ and $ef=1$,  $v\in E_8(-1)$ and $\alpha, \beta \in \ZZ$, with norm $l^2=2k>0$. Let $r=\alpha'e+\beta'f+v'$ be a vector of $R(l)$, where $v'\in E_8(-1)$ and $\alpha', \beta' \in \ZZ$. If $\alpha\ne \beta$, $\alpha,\beta>\sqrt{k}$ and $\alpha\beta<\frac{5}{4}k$, then $\alpha'=\beta'=0$.
\end{lemma}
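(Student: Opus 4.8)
The plan is to translate the three defining relations of the configuration into inequalities over the negative definite lattice $E_8(-1)$ and then combine them via Cauchy--Schwarz. Expanding $l^2 = 2\alpha\beta + v^2 = 2k$ gives $v^2 = 2k - 2\alpha\beta$; writing $a := -v^2 = 2\alpha\beta - 2k$, the hypotheses $\alpha,\beta > \sqrt{k}$ and $\alpha\beta < \tfrac54 k$ yield $0 < a < \tfrac12 k$. Similarly, $r^2 = 2\alpha'\beta' + v'^2 = -2$ gives $v'^2 = -2 - 2\alpha'\beta'$; setting $b := -v'^2 = 2 + 2\alpha'\beta'$ and using that $E_8(-1)$ is negative definite (so that $b = -v'^2 \ge 0$) already forces $\alpha'\beta' \ge -1$. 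Finally, the orthogonality relation $(r,l)=0$ reads $\alpha'\beta + \beta'\alpha = -(v,v')$.

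First I would apply the Cauchy--Schwarz inequality in the negative definite lattice $E_8(-1)$, obtaining $(v,v')^2 \le (-v^2)(-v'^2) = ab$, hence $(\alpha'\beta + \beta'\alpha)^2 \le ab$. Substituting $a = 2\alpha\beta - 2k$ and $b = 2 + 2\alpha'\beta'$ and expanding, the mixed terms $\alpha'\beta'\alpha\beta$ combine so that the bound collapses into the clean master inequality
$$(\alpha'\beta - \beta'\alpha)^2 + 4k\,\alpha'\beta' \le 2a < k.$$
I expect this algebraic reduction — recognizing that the awkward Cauchy--Schwarz bound reorganizes itself into the perfect square $(\alpha'\beta - \beta'\alpha)^2$ plus the single term $4k\,\alpha'\beta'$, with the remainder being exactly $2a < k$ — to be the technical heart of the argument.

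It then remains to run the case analysis on the three possibilities $\alpha'\beta' \ge 1$, $\alpha'\beta' = 0$, and $\alpha'\beta' = -1$ permitted by $\alpha'\beta' \ge -1$. If $\alpha'\beta' \ge 1$, the left-hand side of the master inequality is at least $4k > k$, a contradiction. If $\alpha'\beta' = 0$, say $\beta' = 0$, the inequality reads $\alpha'^2\beta^2 < k$, which together with $\beta^2 > k$ forces $\alpha' = 0$ (and the case $\alpha'=0$ is symmetric), giving the conclusion. If $\alpha'\beta' = -1$, then $b = 0$ forces $v' = 0$, so $r = \pm(e - f)$ and $(r,l) = \pm(\beta - \alpha)$; here the hypothesis $\alpha \ne \beta$ gives $(r,l) \ne 0$, contradicting $r \in R(l)$. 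This is precisely the point where the assumption $\alpha \ne \beta$ is essential. Having exhausted all cases, we conclude $\alpha' = \beta' = 0$.
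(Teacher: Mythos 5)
Your proof is correct. Note that the paper itself gives no internal argument for this lemma: its ``proof'' is only a pointer to \cite[Lemma 3.3 and 3.4]{Pet19}, so your write-up supplies a genuinely self-contained argument where the paper has none. The computations check out: from $l^2=2k$ and $r^2=-2$ one gets $a:=2\alpha\beta-2k\in\left(0,\tfrac{k}{2}\right)$ and $b:=2+2\alpha'\beta'\ge 0$ by negative definiteness of $E_8(-1)$, and Cauchy--Schwarz for the positive definite form $-(\cdot,\cdot)$ applied to $(v,v')=-(\alpha'\beta+\beta'\alpha)$ does, after expansion, collapse to the master inequality
$$(\alpha'\beta-\beta'\alpha)^2+4k\,\alpha'\beta'\le 2a<k.$$
The three cases permitted by $\alpha'\beta'\ge -1$ are each handled correctly: $\alpha'\beta'\ge 1$ gives $4k\le 2a<k$, a contradiction; $\alpha'\beta'=0$ with, say, $\beta'=0$ gives $\alpha'^2\beta^2<k<\beta^2$, forcing $\alpha'=0$ (and symmetrically); and $\alpha'\beta'=-1$ gives $b=0$, hence $v'=0$, $r=\pm(e-f)$ and $(r,l)=\pm(\beta-\alpha)\ne 0$ by the hypothesis $\alpha\ne\beta$, contradicting $r\in R(l)$. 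This is the same circle of estimates used in the cited sources (cf.\ also \cite{TVA19}), but your formulation---a single clean inequality from which all cases are read off---makes transparent exactly where each hypothesis enters: $\alpha\beta<\tfrac54 k$ yields $2a<k$, $\alpha,\beta>\sqrt{k}$ kills the case $\alpha'\beta'=0$, and $\alpha\ne\beta$ kills the case $\alpha'\beta'=-1$.
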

\begin{proof}
See \cite[Lemma 3.3 and 3.4]{Pet19}.
\end{proof}

In other words, if $l=\alpha e+\beta f+v\in U\oplus E_8(-1)$ is a vector of norm $2k$ satisfying the assumptions of Lemma \ref{lemma:E8}, then the roots of $U\oplus E_8(-1)$ orthogonal to $l$ are roots of $E_8(-1)$. Therefore the set $R(l)$ coincides with the set of roots in $v^\bot_{E_8(-1)}$. The following lemma, inspired by \cite[Theorem 7.1]{GHS07}, controls the number of roots of $E_8(-1)$ orthogonal to $v$.

\begin{lemma} \label{lemma:hulek}
There exists $v\in E_8$ with $v^2=2n$ and such that $v^\bot_{E_8}$ contains at least $2$ and at most $8$ roots if the inequality
\begin{equation} \label{eq:inequality}
    2N_{E_7}(2n) > 28N_{E_6}(2n)+63N_{D_6}(2n),
\end{equation}
holds, where $N_L(2n)$ denotes the number of representations of $2n$ by the positive definite lattice $L$.
\end{lemma}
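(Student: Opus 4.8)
The plan is to count, over all vectors $v \in E_8$ with $v^2 = 2n$, the roots of $E_8$ orthogonal to $v$, and to show that the inequality \eqref{eq:inequality} forces at least one such $v$ to have exactly between $2$ and $8$ orthogonal roots. First I would set up the relevant bookkeeping. For each $v$ with $v^2 = 2n$, let $R(v) := \{r \in E_8 : r^2 = -2,\ (r,v) = 0\}$ (note the sign convention: here I work in $E_8$ positive definite, so roots have norm $2$ and I count $r^2 = 2$ with $(r,v)=0$; the lattice $v^\bot_{E_8}$ is negative definite of rank $7$). The orthogonal complement $v^\bot_{E_8}$ is a rank-$7$ lattice, and the roots it contains form a root sublattice. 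The strategy is to argue by contradiction: suppose that \emph{no} $v$ has between $2$ and $8$ orthogonal roots. Then for every $v$, the number $|R(v)|/2$ of root-pairs is either $0$ (i.e.\ $v^\bot_{E_8}$ has no roots) or is large (at least $5$ pairs, i.e.\ $\ge 10$ roots). The goal is to show these two alternatives cannot account for all representations of $2n$ when \eqref{eq:inequality} holds.

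The key geometric input is a classification of which root sublattices can appear in $v^\bot_{E_8}$. If $v^\bot_{E_8}$ contains many roots, its root system must be a rank-$\le 7$ root system embeddable in $E_8$; the relevant large cases are $E_7$, $E_6$, $D_6$, and smaller. The next step is the counting identity that relates $\sum_v |R(v)|$ to representation numbers of sublattices. Concretely, I would count pairs $(v, r)$ with $v^2 = 2n$, $r$ a root, and $(v,r)=0$: summing over $v$ gives $\sum_v |R(v)|$, while summing over root-pairs $r$ and using that each root $r$ has $r^\bot_{E_8} \cong E_7$ gives $240 \cdot N_{E_7}(2n)$ (each of the $240$ roots contributes the number of $v \in r^\bot \cong E_7$ of norm $2n$). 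The idea behind \cite[Theorem 7.1]{GHS07} is to extract from this the existence of a $v$ with few orthogonal roots: if too many $v$ had a large root system in $v^\bot_{E_8}$, the total count would be forced upward, and the main inequality \eqref{eq:inequality} rules this out. More precisely, I would bound the contribution of vectors $v$ whose orthogonal complement contains a root system strictly larger than permitted. The root systems forcing $\ge 10$ orthogonal roots all contain a copy of either $E_6$ or $D_6$ (the maximal rank-$6$ subsystems controlling the overcounting), so the number of such "bad" $v$ is bounded by a weighted sum of $N_{E_6}(2n)$ and $N_{D_6}(2n)$, with the weights $28$ and $63$ arising as the number of ways an $E_6$ (resp.\ $D_6$) subsystem sits inside a root system of $E_8$ orthogonal to some vector.

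Assembling these, the argument is: the quantity $2 N_{E_7}(2n)$ (counting, via the $r^\bot \cong E_7$ identity, the pairs $(v,r)$ normalized appropriately) must be distributed among the vectors $v$, and if every $v$ with a nonzero root count actually had $\ge 10$ roots, then the total would be absorbed by the bad vectors, whose number is controlled by $28 N_{E_6}(2n) + 63 N_{D_6}(2n)$. Whenever \eqref{eq:inequality} holds, there is a strict surplus, and this surplus can only be accounted for by at least one $v$ whose orthogonal complement has between $2$ and $8$ roots. The main obstacle will be pinning down the exact combinatorial constants $28$ and $63$: these encode how many $E_6$'s and $D_6$'s sit as orthogonal subsystems inside the rank-$7$ root systems realizable as $v^\bot_{E_8}$, and getting them right requires a careful enumeration of the sub-root-systems of $E_8$ of rank $\le 7$ and of their containments of $E_6$ and $D_6$. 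Once that enumeration is in hand, the counting inequality follows by the double-counting argument above, essentially mirroring the proof of \cite[Theorem 7.1]{GHS07} with the roles of $E_7$, $E_6$, $D_6$ adapted to the rank-$7$ setting.
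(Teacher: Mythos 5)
Your proposal captures the general flavor of a counting argument, but it misses the key localization that makes the proof work and produces the specific constants $28$ and $63$; as a result it could not be completed as written. The paper's proof (following \cite[Theorem 7.1]{GHS07}) does \emph{not} count over all $v\in E_8$ of norm $2n$: it fixes a single root $a\in E_8$ and considers only vectors $v$ in $E_7^{(a)}:=a^\perp_{E_8}\cong E_7$. This does three things at once. First, $N_{E_7}(2n)$ enters simply as the number of candidate vectors $v\in E_7^{(a)}$ with $v^2=2n$, not through your incidence identity $\sum_v |R(v)|=240\,N_{E_7}(2n)$, whose factor $240$ admits no natural ``normalization'' down to $2$; in the paper the factor $2$ comes from showing that each candidate $v$ lies in at least $2$ of certain sublattices. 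Second, every candidate $v$ is automatically orthogonal to the two roots $\pm a$, which is exactly how the ``at least $2$ roots'' half of the conclusion is secured; your global setup never addresses this. Third, the constants have a precise local meaning: $28$ is the number of $A_2$-subsystems contained in the set $X_{114}$ of roots not orthogonal to $a$ (each such $A_2$ contains $\pm a$, and they correspond to the $28$ pairs $\{c,a-c\}$ with $(c,a)=1$), and $63$ is the number of $A_1$-pairs among the $126$ roots of $E_7^{(a)}$. Assuming every $v\in E_7^{(a)}$ of norm $2n$ is orthogonal to $\ge 10$ roots, one shows each such $v$ lies in at least two of the $91$ lattices $(A_2^{(i)})^\perp_{E_8}\cong E_6$ and $(A_1^{(j)})^\perp_{E_7^{(a)}}\cong D_6$, whence $2N_{E_7}(2n)\le 28N_{E_6}(2n)+63N_{D_6}(2n)$, contradicting \eqref{eq:inequality}.

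Two concrete errors in your sketch. (1) Your structural claim that any root system forcing $\ge 10$ orthogonal roots ``contains a copy of either $E_6$ or $D_6$'' is false: $5A_1$, $A_2\oplus 2A_1$, $2A_2$, $A_3\oplus A_1$, etc.\ all have at least $10$ roots and contain neither. The correct statement is dual to yours: the bad vectors $v$ \emph{lie in} copies of $E_6$ and $D_6$ (orthogonal complements of suitable $A_2$'s and $A_1$'s), which is precisely why the representation numbers $N_{E_6}(2n)$ and $N_{D_6}(2n)$ control their count. (2) In your global setup the analogous bounds would have to range over \emph{all} $A_2$-subsystems of $E_8$ (there are $1120$ of them) and all $A_1$'s, so the coefficients appearing in your inequality would be far larger than $28$ and $63$, and the hypothesis \eqref{eq:inequality} would be too weak to yield any contradiction. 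So the gap is not merely ``pinning down the constants,'' as you put it: without fixing the root $a$ and restricting to $v\in a^\perp$, the double-counting scheme you propose cannot produce the stated inequality at all.
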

\begin{proof}
We follow closely \cite[Theorem 7.1]{GHS07}. Let $a\in E_8$ be a root. Its orthogonal complement $E_7^{(a)}:=a^\bot_{E_8}$ is isometric to $E_7$. The set of $240$ roots in $E_8$ consists of the $126$ roots in $E_7^{(a)}$ and $114$ other roots, forming the subset $X_{114}$. Assume that every $v\in E_7^{(a)}$ with $v^2=2n$ is orthogonal to at least $10$ roots in $E_8$, including $\pm a$. By \cite[Lemma 7.2]{GHS07} we know that every such $v$ is contained in the union
\begin{equation} \label{eq:N}
    \bigcup_{i=1}^{28}{(A_2^{(i)})^\bot_{E_8}} \sqcup \bigcup_{j=1}^{63}{(A_1^{(j)})^\bot_{E_7^{(a)}}},
\end{equation}
where $A_2^{(i)}$ (resp. $A_1^{(j)}$) are root systems of type $A_2$ (resp. $A_1$) contained in $X_{114}$ (resp. $E_7^{(a)}$). Denote by $n(v)$ the number of components in the union (\ref{eq:N}) containing $v$. Since $(A_2^{(i)})^\bot_{E_8}\cong E_6$ and $(A_1^{(j)})^\bot_{E_7^{(a)}}\cong D_6$, we have counted the vector $v$ exactly $n(v)$ times in the sum
$$28N_{E_6}(2n)+63N_{D_6}(2n).$$
We distinguish three cases.
\begin{enumerate}[(i)]
    \item If $v\cdot c \ne 0$ for every $c\in X_{114}\smallsetminus \{\pm a\}$, then $v$ is orthogonal to at least $4$ copies of $A_1$ in $E_7^{(a)}$, so $n(v)\ge 4$.
    \item If $v$ is orthogonal to only one $A_2^{(i)}$ ($6$ roots), then $v$ is orthogonal to at least $2$ copies of $A_1$ in $E_7^{(a)}$, so $n(v)\ge 3$.
    \item If $v$ is orthogonal to at least two $A_2^{(i)}$, then $n(v)\ge 2$.
\end{enumerate}
In conclusion $n(v)\ge 2$ for every $v\in E_7^{(a)}$. Therefore, under our assumption that every $v\in E_7^{(a)}$ with $v^2=2n$ is orthogonal to at least $10$ roots, we have shown that any such $v$ is contained in at least $2$ sets of the union (\ref{eq:N}), i.e.\
$$2N_{E_7}(2n) \le 28N_{E_6}(2n)+63N_{D_6}(2n).$$
\end{proof}

\begin{prop} \label{prop:952}
Let $n\ge 952$. Then there exists $v\in E_8(-1)$ with $v^2=-2n$ such that $v^\bot_{E_8(-1)}$ contains at least $2$ and at most $8$ roots.
\end{prop}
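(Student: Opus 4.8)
The plan is to reduce Proposition~\ref{prop:952} to the purely analytic inequality~\eqref{eq:inequality} supplied by Lemma~\ref{lemma:hulek}, and then verify that inequality for all $n\ge 952$ by means of asymptotic estimates on the representation numbers $N_{E_7}(2n)$, $N_{E_6}(2n)$, and $N_{D_6}(2n)$. Since a vector $v\in E_8(-1)$ of norm $-2n$ is the same (up to sign of the form) as a vector $v\in E_8$ of norm $2n$, and the root count in $v^\bot$ is insensitive to this sign, it suffices to produce $v\in E_8$ with $v^2=2n$ whose orthogonal complement contains between $2$ and $8$ roots. By Lemma~\ref{lemma:hulek} this is guaranteed once we show
\[
2N_{E_7}(2n) > 28N_{E_6}(2n)+63N_{D_6}(2n)
\]
for every $n\ge 952$. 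Thus the entire content of the proposition is the verification of this single inequality in the stated range.

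The key tool is the theory of modular forms: each $N_L(2n)$ is the $2n$-th Fourier coefficient of the theta series $\Theta_L(\tau)=\sum_{x\in L}q^{x^2/2}$, which is a modular form of weight $\operatorname{rk}(L)/2$ for a suitable congruence subgroup. Concretely, $\Theta_{E_7}$ has weight $7/2$, while $\Theta_{E_6}$ and $\Theta_{D_6}$ have weight $3$. The first step is therefore to write each theta series as Eisenstein part plus cusp part, $\Theta_L=E_L+S_L$, and to record the exact (Eisenstein) main terms. The Eisenstein coefficients grow like $n^{\operatorname{rk}(L)/2-1}$: so $N_{E_7}(2n)\sim c_7\, n^{5/2}$ while $N_{E_6}(2n)\sim c_6\, n^2$ and $N_{D_6}(2n)\sim c_6'\, n^2$ (both of strictly smaller order). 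Because the left-hand side of the inequality is of order $n^{5/2}$ and the right-hand side is of order $n^2$, the inequality holds for all sufficiently large $n$; the second step is to make the implied threshold fully explicit. This requires bounding the cusp-form contributions from below and above: by Deligne's bound (or the classical Hecke bound) the $2n$-th coefficient of a weight-$w$ cusp form is $O(n^{(w-1)/2+\varepsilon})$, so the cuspidal error in $N_{E_7}$ is $O(n^{5/4+\varepsilon})$ and those in $N_{E_6},N_{D_6}$ are $O(n^{1+\varepsilon})$, all negligible against the respective main terms.

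To turn this into a clean proof I would first compute the exact Eisenstein main coefficients. For the weight-$3$ forms this is elementary: the relevant spaces are low-dimensional and one can pin down $N_{E_6}(2n)$ and $N_{D_6}(2n)$ exactly (or bound them above) in terms of divisor-type sums, giving explicit constants $c_6,c_6'$ and explicit error bounds. For $\Theta_{E_7}$, weight $7/2$, the Eisenstein coefficient is a half-integral-weight analogue whose main term $c_7 n^{5/2}$ can be extracted from the Siegel--Weil formula or read off from the known singular series; the cusp contribution is controlled by the Hecke bound on $S_{7/2}$. Assembling these, one obtains an explicit $N$ such that the inequality holds for all $n\ge N$; I would then check the finitely many residual cases $952\le n<N$ by direct machine computation of the three representation numbers (the paper already adopts the convention of using \texttt{Magma}), confirming the inequality numerically. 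The choice of the cutoff $952$ is precisely the smallest $n$ beyond which both the asymptotic argument and the finite numerical check succeed.

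I expect the main obstacle to be the careful bookkeeping of constants in the half-integral-weight case. Getting an \emph{effective} lower bound on $N_{E_7}(2n)$ that beats the combined upper bounds on $28N_{E_6}(2n)+63N_{D_6}(2n)$ uniformly from $n=952$ onward demands that the Eisenstein main term be computed with its correct constant and that the cusp-form error be bounded with an explicit (not merely $O$-) constant; any sloppiness here either inflates the threshold well past $952$ or leaves a gap that the finite check cannot close. A secondary subtlety is ensuring the hypotheses of Lemma~\ref{lemma:hulek} are applied correctly: the lemma is contrapositive in spirit, so once the strict inequality is established we conclude that \emph{some} $v\in E_7^{(a)}$ of norm $2n$ is orthogonal to at most $8$ roots of $E_8$, while orthogonality to the fixed root $a$ (hence to $\pm a$) supplies the required lower bound of at least $2$ roots; verifying that these two roots are genuinely counted and that no double-counting inflates the upper bound past $8$ is the final point to check.
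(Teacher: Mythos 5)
Your reduction is exactly the paper's: Proposition \ref{prop:952} is deduced from Lemma \ref{lemma:hulek} by verifying inequality (\ref{eq:inequality}) for all $n\ge 952$, and the two approaches differ only in how that inequality is verified. The paper does it in two lines, by quoting the explicit estimates already established in \cite[Equations (31), (33) and (34)]{GHS07}, namely $N_{E_7}(2n)>123.8\, n^{5/2}$, $N_{E_6}(2n)<103.69\, n^2$ and $N_{D_6}(2n)<75.13\, n^2$; with these constants one has $2N_{E_7}(2n)>247.6\, n^{5/2}$ and $28N_{E_6}(2n)+63N_{D_6}(2n)<7636.51\, n^2$, and $247.6\,\sqrt{n}>7636.51$ holds precisely for $n\ge 952$ --- this is where the threshold comes from, and no supplementary machine check over a residual range is needed. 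Your plan instead re-derives such estimates from scratch via the Eisenstein-plus-cusp decomposition of the theta series; this is legitimate and is in fact exactly how \cite{GHS07} obtained the quoted bounds, so your route would succeed, at the cost of redoing their explicit bookkeeping of constants. Two small caveats. First, your appeal to Deligne's bound for the cuspidal part of $\Theta_{E_7}$ is not available as stated: $\Theta_{E_7}$ has half-integral weight $7/2$, where the Ramanujan--Petersson bound is not a theorem; this is harmless, since the Hecke-type bound $O(n^{7/4})$ (or the explicit estimate of \cite{GHS07}) already loses to the $n^{5/2}$ main term. Second, your worry about securing the lower bound of $2$ roots and avoiding double-counting is unnecessary: Lemma \ref{lemma:hulek} as stated already delivers both the lower bound $2$ and the upper bound $8$ once the inequality holds, so nothing further needs to be verified there.
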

\begin{proof}
\cite[Equations (31), (33) and (34)]{GHS07} give the following estimates:
$$N_{E_7}(2n)>123.8 \ n^{5/2}, \qquad N_{E_6}(2n)<103.69 \ n^2,\qquad N_{D_6}(2n)<75.13 \ n^2.$$
By Lemma \ref{lemma:hulek}, we immediately obtain the claim.
\end{proof}

We are now ready to answer Problem \ref{prob}:

\begin{prop}\label{prop:answer}
Let $k\ge 4900$. Then there exists a primitive $l\in U\oplus E_8(-1)$ with $l^2=2k$ and $2\le |R(l)|\le 8$.
\end{prop}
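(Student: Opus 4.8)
The plan is to reduce Proposition \ref{prop:answer} to Proposition \ref{prop:952} by constructing, for each $k \geq 4900$, a suitable primitive vector $l \in U \oplus E_8(-1)$ of norm $2k$ whose orthogonal roots all live inside $E_8(-1)$. The key tool is Lemma \ref{lemma:E8}: if I can write $l = \alpha e + \beta f + v$ with $\alpha \neq \beta$, $\alpha, \beta > \sqrt{k}$, and $\alpha\beta < \frac{5}{4}k$, then every root of $U \oplus E_8(-1)$ orthogonal to $l$ has trivial $U$-component, so $R(l)$ coincides exactly with the set of roots in $v^\bot_{E_8(-1)}$. Then I can feed $v$ into Proposition \ref{prop:952} to guarantee that this root count lands in the desired range $[2,8]$.

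First I would fix the norm bookkeeping. Since $l^2 = 2\alpha\beta + v^2 = 2k$ and $v \in E_8(-1)$ has $v^2 = -2n \leq 0$, I get the relation $\alpha\beta = k + n$. The goal is therefore to choose integers $\alpha \neq \beta$ and an integer $n \geq 952$ so that (a) $\alpha\beta = k + n$, (b) $\alpha, \beta > \sqrt{k}$, (c) $\alpha\beta < \frac{5}{4}k$, and (d) there exists $v \in E_8(-1)$ with $v^2 = -2n$ and $2 \leq |R(l)| \leq 8$. Constraint (d) is handled for free by Proposition \ref{prop:952} as soon as $n \geq 952$, so the real work is arranging (a)--(c) simultaneously. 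Combining (a) and (c) forces $n < \frac{1}{4}k$, and for $k \geq 4900$ this allows $n$ to range over the admissible window $952 \leq n < \frac{1}{4}k$, which is non-empty precisely because $\frac{1}{4}\cdot 4900 = 1225 > 952$.

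Next I would produce the actual factorization. The condition $\alpha, \beta > \sqrt{k}$ together with $\alpha\beta = k+n$ and $\alpha \neq \beta$ says I need a factorization of the integer $k+n$ into two distinct factors both exceeding $\sqrt{k}$; since $k+n > k$, a balanced-enough factorization achieves this. The cleanest way to guarantee flexibility is to allow $n$ to vary: rather than fixing $n$ and hoping $k+n$ factors nicely, I would pick a convenient target product $P = k+n$ in the range $(k + 951,\, \frac{5}{4}k)$ that admits a factorization $P = \alpha\beta$ with $\sqrt{k} < \alpha < \beta$. For instance, one can try $\alpha = \lceil \sqrt{k} \rceil + s$ for a small shift $s$ and set $\beta = \lceil P/\alpha \rceil$, adjusting $n = \alpha\beta - k$ to stay in $[952, \frac{1}{4}k)$; the width $\frac{1}{4}k - 952$ of the window grows linearly in $k$, so for $k \geq 4900$ there is ample room. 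I expect the main obstacle to be precisely this elementary-but-fiddly number-theoretic bookkeeping: ensuring that for \emph{every} $k \geq 4900$ one can hit an integer $n$ in the window $[952, \frac{1}{4}k)$ of the form $\alpha\beta - k$ with $\alpha \neq \beta$ both strictly above $\sqrt{k}$. This is not deep, but it requires care to confirm that the discrete constraints (integrality of $\alpha, \beta, n$, distinctness, and the strict inequalities) can always be met rather than merely generically. Once such $\alpha, \beta, n$ are chosen and $v$ is supplied by Proposition \ref{prop:952}, Lemma \ref{lemma:E8} identifies $R(l)$ with the roots of $v^\bot_{E_8(-1)}$, giving $2 \leq |R(l)| \leq 8$, and primitivity of $l$ follows since $\gcd$ considerations on the $U$-coefficients $\alpha, \beta$ can be arranged (e.g. taking $\alpha, \beta$ coprime), completing the argument.
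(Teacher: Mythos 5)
Your reduction is exactly the one the paper uses: write $l=\alpha e+\beta f+v$ with $\alpha\beta=k+n$, invoke Lemma \ref{lemma:E8} to force every root orthogonal to $l$ into $E_8(-1)$, and supply $v$ via Proposition \ref{prop:952}; the constraints (a)--(c), the window $952\le n<\frac{1}{4}k$, and the coprimality argument for primitivity are all identified correctly. But the proposal stops short of the only step that carries content beyond the two quoted results: exhibiting, for \emph{every} $k\ge 4900$, integers $\alpha\ne\beta$ with $\alpha,\beta>\sqrt{k}$ and $k+952\le\alpha\beta<\frac{5}{4}k$. You flag this yourself as ``elementary-but-fiddly bookkeeping'' that ``requires care to confirm,'' and the recipe you sketch --- pick a target product $P$, set $\beta=\lceil P/\alpha\rceil$, then ``adjust'' $n$ --- is not a proof: adjusting $n$ while preserving the factorization, the lower bounds on $\alpha,\beta$, and the strict upper bound is precisely what needs to be verified, and non-emptiness of the window $[952,\frac{1}{4}k)$ does not by itself produce an integer in it of the form $\alpha\beta-k$ with both factors above $\sqrt{k}$.

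The fix is a one-line explicit choice, and it is what the paper does: take $\alpha=\lceil\sqrt{k}+6\rceil$ and $\beta=\alpha+1$. Consecutive integers are automatically distinct and coprime (hence $l$ is primitive), both exceed $\sqrt{k}$, and
$$ n=\alpha\beta-k\ \ge\ (\sqrt{k}+6)(\sqrt{k}+7)-k\ =\ 13\sqrt{k}+42\ \ge\ 952, $$
$$ \tfrac{5}{4}k-\alpha\beta\ \ge\ \tfrac{5}{4}k-(\sqrt{k}+7)(\sqrt{k}+8)\ =\ \tfrac{1}{4}k-15\sqrt{k}-56\ >\ 0, $$
both valid for $\sqrt{k}\ge 70$, the first with equality exactly at $k=4900=70^2$. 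This computation is where the threshold $4900$ in the statement comes from: a shift of $6$ above $\sqrt{k}$ is the smallest making $(2s+1)\sqrt{k}+s(s+1)\ge 952$ once $\sqrt{k}\ge 70$. In short: right strategy, right constraints, but the decisive explicit construction is missing, and without it the proposition is not proved.
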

\begin{proof}
Pick $k>0$ and consider $l=\alpha e + \beta f + v$, where $l^2=2k$, $v^2=-2n$, so that $\alpha\beta=n+k$. Suppose that there exist $\alpha$ and $\beta$ satisfying the hypotheses of Lemma \ref{lemma:E8} such that $n=\alpha\beta-k \ge 952$. Then Proposition \ref{prop:952} implies that we can find a $v\in E_8(-1)$ with $v^2=-2n$ such that $v^\bot_{E_8(-1)}$ contains at least $2$ and at most $8$ roots. Moreover Lemma \ref{lemma:E8} ensures that the roots of $U\oplus E_8(-1)$ orthogonal to $l=\alpha e +\beta f +v$ are contained in $E_8(-1)$, so that $l^\bot_{U\oplus E_8(-1)}$ also contains at least $2$ and at most $8$ roots. Therefore the existence of such $\alpha,\beta$ is sufficient for the existence of $l\in U\oplus E_8(-1)$ with $2\le |R(l)|\le 8$.

Now let $k\ge 4900=70^2$, and consider
$$\alpha=\lceil \sqrt{k}+6\rceil,\qquad \beta=\alpha+1.$$
Clearly $\alpha\ne \beta$, $\gcd(\alpha,\beta)=1$ and $\alpha,\beta > \sqrt{k}$. Moreover
$$\frac{5}{4}k-\alpha\beta \ge \frac{5}{4}k - (\sqrt{k}+7)(\sqrt{k}+8)=\frac{1}{4}k-15\sqrt{k}-56>0 ,$$ 
and
$$n=\alpha\beta-k \ge (\sqrt{k}+6)(\sqrt{k}+7)-k= 13\sqrt{k}+42\ge 952,$$
completing the proof.
\end{proof}

In order to deal with the remaining values of $k$, we use of the geometry of the K3 surfaces with N\'eron-Severi lattice isometric to $U\oplus E_8(-1)$. We recall in the  following the main properties of such surfaces.\newline

Let $X$ be a K3 surface with $\NS(X)=U\oplus E_8(-1)$. Then $X$ has finite automorphism group and a finite number of irreducible $(-2)$-curves (see e.g. \cite{Nik79a} or \cite{Kon89}). More precisely, if $|E|$ denotes the unique elliptic fibration on $X$, then the irreducible $(-2)$-curves on $X$ are the $9$ curves $C_2,\ldots,C_{10}$ contained in the reducible fiber of $|E|$, plus the unique section of $E$, which we will denote by $C_{1}$. The dual graph of such $(-2)$-curves is
\begin{equation} \label{eq:diagram}
    \begin{tikzpicture}
  \filldraw 
(-2.25,0) circle (2pt) -- (-1.5,0) circle (2pt)  -- (-0.75,0) circle (2pt)  -- (0,0) circle (2pt)  -- (0.75,0) circle (2pt)  -- (1.5,0) circle (2pt)  -- (2.25,0) circle (2pt)  -- (3,0) circle (2pt)  -- (3.75,0) circle (2pt) 
(2.25,0) -- (2.25,-0.75) circle (2pt) ;
  \end{tikzpicture}
\end{equation}
where $C_1,\ldots,C_7,C_9,C_{10}$ are the curves in the upper line, and $C_8$ is such that $C_7C_8=1$.\newline
  
  Now let $D\in \NS(X)=U\oplus E_8(-1)$ be a primitive divisor of norm $2k>0$ with $2 \le |R(D)| \le 8$. In other words, $D^\perp$ contains at least $1$ and at most $4$ effective $(-2)$-divisors. Up to the action of the Weyl group $W < \Or(U\oplus E_8(-1))$ we can assume that $D$ is nef, since isometries of $U \oplus E_8(-1)$ do not change the number of orthogonal roots. The nef cone $\mathrm{Nef}(X)$ is rational polyhedral, and a basis can be computed in \texttt{Magma}. It turns out that such basis $\{D_1,\ldots,D_{10}\}$ is the dual basis of $\{C_1,\ldots,C_{10}\}$, i.e.\ $D_i C_j =\delta_{ij}$ for all $1 \le i,j \le 10$. For instance, $D_1=E$ defines the only elliptic fibration on $X$, and $D_1^2=0$. Moreover $D_i^2>0$ for $i\ge 2$. This means that any nef divisor $D$ is a linear combination of $D_1,\ldots,D_{10}$ with non-negative coefficients
  $$D=\sum_{i=1}^{10}{d_i D_i}.$$
  By construction, the $(-2)$-curve $C_j$ is orthogonal to $D$ if and only if $d_j=0$. This implies that the root part of $D^\perp$ is a root lattice $R$ generated by the $(-2)$-curves $\{C_j \mid d_j=0\}$. Since $R$ contains at most $4$ effective roots, it is one of the following root lattices:
  $$A_1(-1), \ 2A_1(-1), \ 3A_1(-1), \ 4A_1(-1), \ A_2(-1), \ A_2(-1)\oplus A_1(-1).$$
  Fix one of the finitely many sub-diagrams $J \subseteq \{1,\ldots,10\}$ of the dual graph (\ref{eq:diagram}) giving rise to a root lattice $\langle C_j \mid j \in J\rangle$ isometric to $R$. Thus the nef divisors $D$ orthogonal precisely to $\{C_j \mid j \in J\}$ are all those of the form
  $$D=\sum_{i \notin J}{d_i D_i}$$
  for some $d_i > 0$. Since we are only interested in divisors of norm $2k < 2\cdot 4900$, we can use the inequality
$$D^2 \ge \sum_{ i \notin J}{d_i ^2 D_i^2} + 2 \sum_{1 \ne i \notin J}{d_1d_i (D_1D_i)},$$
to bound the $d_i$ for $i \notin J$. More precisely, we have that $$d_i^2 \le \frac{2 \cdot 4900}{D_i^2} \ \forall i \ge 2, \quad \mathrm{and} \quad d_1 \le \frac{4900}{\sum_{1 \ne i \notin J}{D_1D_i}}.$$
 
  By varying the coefficients $d_i$'s in these ranges, we obtain all primitive vectors $D \in U \oplus E_8(-1)$ with $D^2 \le 2 \cdot 4900$ and $2 \le |R(D)| \le 8$ up to the action of $\Or(U\oplus E_8(-1))$. Therefore this search is completely exhaustive.
  
  A similar list can be obtained if we allow $D$ to have up to $10$ orthogonal roots. All the previous discussion works analogously, with the only difference that the root part of $D^\perp$ can also be isometric to $5A_1(-1)$ or $A_2(-1)\oplus 2A_1(-1)$. 
  
  We get the following: a primitive vector $l \in U \oplus E_8(-1)$ with $l^2=2k<2 \cdot 4900$ and $2 \le |R(l)|\le 8$ exists if and only if
  \begin{equation} \label{eq:set1}
 k \ge 208, \  k\ne 211,219 \text{ or }  k\in \{170,185,186,188,190,194,200,202,204,206\}.
\end{equation}
Moreover a similar vector $l$ with $2\le |R(l)|\le 10$ exists if and only if
\begin{equation} \label{eq:set2}
 k\ge 164,  \ k \ne 169,171,175 \text{ or } k\in \{140,146,150,152,154,155,158,160,162\}.
\end{equation}
  
 We have implemented the algorithm described above in \texttt{Magma}. The interested reader can find it in the \texttt{Arxiv} \footnote{arXiv:2003.10957} distribution of this paper, together with a list of primitive vectors realizing the values of $k$ listed in (\ref{eq:set1}) and (\ref{eq:set2}).

We are now ready to prove Theorem \ref{thm:main}.

\begin{proof}[Proof of Theorem \ref{thm:main}]
Proposition \ref{prop:answer} combined with the subsequent search ensures that there exists a primitive $l\in U\oplus E_8(-1)$ with norm $l^2=2k$ and $2\le |R(l)|\le 8$ if $k\ge 4900$ or $k$ belongs to the list (\ref{eq:set1}), in particular for any $k\geq220$. Such an $l\in U\oplus E_8(-1)$ determines an embedding $L_{2k}\hookrightarrow \mathrm{II}_{2,26}$ with the property
$$ 1 \leq N(L_{2k}) \leq 4, $$
where $N(L_{2k})$ is the number of effective roots in the orthogonal complement $(L_{2k})^\perp_{\mathrm{II}_{2,26}} $. Hence Theorem \ref{thm:quasi} provides a non-zero cusp form $\Phi|_{L_{2k}}$ of weight $12+N(L_{2k})\leq 12+4<17=\dim(\cM_{2k})$, which vanishes along the ramification divisor of $\pi: \Omega_{L_{2k}} \rightarrow \cM_{2k}$ in view of Proposition \ref{prop:ram}, since $l^\perp$ does not contain $E_8(-1)$, otherwise $l$ would be orthogonal to at least $240$ roots. Then the low-weight cusp form trick (Theorem \ref{thm:trick}) ensures that $\cM_{2k}$ is of general type. 

An analogous argument shows that $\cM_{2k}$ has non-negative Kodaira dimension if $k$ belongs to the list (\ref{eq:set2}), in particular for any $k\geq 176$.
\end{proof}

\section{Geometric constructions}\label{sec:constructions}

In this section we recall three well-known geometric constructions of K3 surfaces. Namely, double covers of the quadric surface $\PP^1\times \PP^1$ (see \S \ref{subsec:F_0}) and of the Hirzebruch surface $\FF_4$ (see \S \ref{subsec:F_4}) branched over suitable curves define lattice polarized K3 surfaces with respect to the lattices $U(2)$ and $U$ respectively. Furthermore, every elliptic K3 surface can be reconstructed from its Weierstrass fibration (see \S \ref{subsec:Weierstrass}).

\subsection{Double covers of $\PP^1\times \PP^1$} \label{subsec:F_0} Let $\FF_0:=\PP^1\times \PP^1$ be the smooth quadric surface in $\PP^3$. Its Picard group is generated by the classes of the two pencils $\ell_1,\ell_2$ of lines, hence $\Pic(\FF_0)$ endowed with the intersection form on $\FF_0$ is isomorphic to the hyperbolic plane $U$. The canonical bundle is $K_{\FF_0}=\cO_{\FF_0}(-2,-2)$.

Now let $\pi:X\rightarrow \FF_0$ be the double cover branched over a smooth curve $B\in |-2K_{\FF_0}|=|\cO_{\FF_0}(4,4)|$. Then $X$ is a smooth K3 surface. The pullbacks $E_i=\pi^*\ell_i$ for $i=1,2$ are smooth elliptic curves, and $E_1E_2=2\ell_1\ell_2=2$, so that
$$\langle E_1,E_2\rangle =U(2) \hookrightarrow \NS(X).$$
This embedding is primitive, and $\NS(X)=U(2)$ for a very general branch divisor $B$.

Assume now that there exists a smooth rational curve $C\in |\cO_{\FF_0}(1,d)|$ for $d\ge 0$ intersecting $B$ with even multiplicities. For instance, $C$ can be simply tangent to $B$ in exactly $2d+2$ points. Then we have the following (cf. \cite[Proposition 5.1]{Fes18}):

\begin{lemma} \label{lemma:even}
Let $\nu:X\rightarrow Y$ be a double cover of smooth projective surfaces branched over a smooth curve $B$, and assume that there exists a smooth rational curve $C\subseteq Y$ intersecting $B$ with even multiplicities. Then the pullback $\nu^*C$ splits into two disjoint irreducible components, both isomorphic to $C$.
\end{lemma}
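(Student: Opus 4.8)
The plan is to use the theory of double covers: a double cover $\nu\colon X\to Y$ branched over a smooth curve $B$ is determined by a line bundle $\mathcal{L}$ on $Y$ with $\mathcal{L}^{\otimes 2}\cong \cO_Y(B)$, and the key structural fact is that $\nu^*\cO_Y(B)$ has a canonical square root, namely $\nu^*\mathcal{L}$, whose sections govern how pullbacks of divisors split. First I would restrict the whole situation to the curve $C$. Since $C\cong \PP^1$ is smooth and rational, I would pull back the branched cover structure along the inclusion $C\hookrightarrow Y$ to obtain a double cover of $\PP^1$; the ramification of this restricted cover is controlled by the scheme-theoretic intersection $B\cap C$. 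The hypothesis that $C$ meets $B$ with even multiplicities is precisely what makes the divisor $B\cap C$ on $\PP^1$ equal to $2\Delta$ for some effective divisor $\Delta$ on $C$, so that the restricted line bundle $\mathcal{L}|_C$ is isomorphic to $\cO_{\PP^1}(\Delta)$ and admits a section whose square cuts out $B\cap C$.

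The central step is then to show that $\nu^{-1}(C)=\nu^*C$ is disconnected, equivalently that the double cover of $C$ induced by $\nu$ is the trivial (split) one. I would argue this as follows: a double cover of a smooth variety $Z$ with branch divisor $B'$ and defining line bundle $\mathcal{M}$ (so $\mathcal{M}^{\otimes 2}\cong \cO_Z(B')$) splits as two disjoint copies of $Z$ if and only if $B'$ is the zero divisor, i.e.\ the cover is unramified, and moreover $\mathcal{M}$ is trivial as a $2$-torsion element cutting out a connected étale double cover. On $Z=C\cong \PP^1$, which is simply connected, there are no nontrivial connected étale double covers, so the restricted cover is split as soon as it is unramified. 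The even-multiplicity hypothesis guarantees exactly this: since $B\cap C=2\Delta$ is divisible by $2$, one can modify the covering data by the section cutting out $\Delta$ to realize the restricted cover as $\mathrm{Spec}$ of $\cO_C\oplus\cO_C$ with the split algebra structure, hence $\nu^{-1}(C)\cong C\sqcup C$.

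Concretely, I would phrase this via the local equation of the cover. Near $C$ the double cover $X$ is given inside the total space of $\mathcal{L}$ by an equation $w^2=s$, where $s\in H^0(Y,\mathcal{L}^{\otimes 2})=H^0(Y,\cO_Y(B))$ is the section vanishing on $B$. Restricting to $C$ and using that $s|_C$ vanishes to even order at every point of $B\cap C$, the section $s|_C$ becomes a perfect square $t^2$ for some $t\in H^0(C,\mathcal{L}|_C)$; here I use that on $\PP^1$ an effective divisor divisible by $2$ is the divisor of a square of a section, since $\Pic(\PP^1)=\ZZ$ and line bundles of even degree have square roots with the requisite sections. Then the equation $w^2=t^2$ factors as $(w-t)(w+t)=0$, exhibiting $\nu^{-1}(C)$ as the union of the two graphs $\{w=t\}$ and $\{w=-t\}$. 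These two components are disjoint because they can only meet where $w=t=0$, i.e.\ over the ramification locus, which is empty after the even-multiplicity reduction; and each is isomorphic to $C$ via the projection $\nu$.

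The main obstacle I anticipate is making the passage from ``even intersection multiplicities'' to ``$s|_C$ is a global square'' fully rigorous, rather than merely a pointwise statement: one must check that the local square roots $t$ glue to a genuine global section of $\mathcal{L}|_C$, and that the two components are globally disjoint and irreducible rather than just generically so. Both points are resolved by the simple connectedness and the triviality of the Picard group of $\PP^1$, which is why the hypothesis that $C$ be \emph{rational} (and not merely smooth) is essential; on a curve of positive genus a nontrivial $2$-torsion line bundle could produce a connected unramified double cover even when the branch locus is empty. Once the global section $t$ is in hand, irreducibility and disjointness of the two graphs follow formally, completing the proof.
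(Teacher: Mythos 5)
Your route genuinely differs from the paper's: the paper restricts the cover to $C$ and appeals to the topological fact that an unbranched double cover of $\PP^1$ splits, whereas you restrict the covering data $(\mathcal{L},s)$ with $\mathcal{L}^{\otimes 2}\cong\cO_Y(B)$, extract a global square root $t\in H^0(C,\mathcal{L}|_C)$ of $s|_C$ using $\Pic(\PP^1)=\ZZ$ and the fact that every nonzero constant is a square, and factor $w^2-t^2=(w-t)(w+t)$. Up to that point your argument is complete and correct, and it is in some ways more rigorous than the paper's, because it proves exactly the true content of the lemma: $\nu^*C$ splits into the two irreducible components $\{w=t\}$ and $\{w=-t\}$, each mapped isomorphically onto $C$ by $\nu$.

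The final step, however, is wrong: the two graphs meet precisely where $t=-t$, i.e.\ where $t=0$, and $\mathrm{div}(t)=\Delta$ with $2\Delta=B|_C$, so they meet over \emph{every} point of $B\cap C$. The even-multiplicity hypothesis makes $B|_C$ divisible by $2$; it does not make it empty, so your claim that this locus is ``the ramification locus, which is empty after the even-multiplicity reduction'' contradicts your own setup. The slip is a conflation of the subscheme $\nu^{-1}(C)$ (the cover of $C$ with data $(\mathcal{L}|_C,t^2)$, a curve with two smooth branches crossing at each point of $\Delta$) with the trivial cover (data $(\cO_C,1)$): they share the normalization $C\sqcup C$ but are not isomorphic when $\Delta\neq 0$. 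Concretely, for $w^2=y$, $B=V(y)$, $C=V(y-x^2)$ (a simple tangency), one gets $\nu^{-1}(C)=V(w^2-x^2)$, two lines through a point. In fact no proof could close this gap, because the disjointness assertion in the lemma as stated is false --- an error in the paper itself: in its own application with $Y=\FF_0$ and $C\in|\cO_{\FF_0}(1,d)|$, the projection formula gives $(\nu^*C)^2=2C^2=4d\geq 0$, while two disjoint $(-2)$-curves would force $(\nu^*C)^2=-4$; the correct count is $D_1\cdot D_2=2d+2$, one intersection point over each tangency. The paper's proof slips at the parallel spot, by asserting that the branch locus of $\nu^{-1}(C)\rightarrow C$ consists of the odd-multiplicity points; that is true only for the normalization of $\nu^{-1}(C)$. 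None of this damages the paper (or your construction): all subsequent lattice computations use only one component $D_1$, its self-intersection and its intersections with pulled-back classes, never the disjointness.
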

\begin{proof}
Let $D:=\nu^{-1}(C)\subseteq X$. The double cover $\nu$ induces a double cover $\overline{\nu}:D\rightarrow C$, which is isomorphic to an unbranched double cover. This is because the branch locus of $\overline{\nu}$ coincides with the set $b(C):=\{x\in C \mid \mathrm{mult}_x(C,B)\equiv 1 \pmod{2}\}=\varnothing$. The unique unbranched double cover of $C\cong \PP^1$ is given by a disjoint union of two smooth rational curves isomorphic to $C$.
\end{proof}

In the case $Y=\FF_0$ as above, the pullback $D=\pi^*C=D_1+D_2$ splits into the union of two irreducible components $D_1,D_2\cong \PP^1$. Since $D_1$ is smooth rational, we have $D_1^2=-2$, and moreover $D_1E_1=1$, $D_1E_2=d$. This implies that there exists an embedding (not necessarily primitive)
$$\langle E_1,E_2,D_1\rangle = \begin{pmatrix}
0 &2 &1\\
2 &0 &d\\
1 &d &-2
\end{pmatrix}\cong U\oplus \langle -2(2d+4)\rangle \hookrightarrow \NS(X).$$

If instead the branch divisor $B$ is not smooth, but has simple singularities, the double cover $\pi:X\rightarrow \FF_0$ is a K3 surface with isolated simple singularities. Therefore the minimal desingularization $\widetilde{X}\rightarrow X$ is a smooth K3 surface, since simple singularities do not change adjunction.

The following result is well known, but we include its proof for the sake of completeness.

\begin{prop} \label{prop:F0}
Let $X$ be an elliptic K3 surface with $\NS(X)\cong U \oplus \langle -2k\rangle$ for some $k\geq 1$. Then $X$ can be realized as a double cover of $\FF_0$ if and only if $k$ is even and $k\geq 4$.
\end{prop}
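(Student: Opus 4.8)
The plan is to prove both implications of the stated equivalence, relating the arithmetic condition ($k$ even, $k \ge 4$) to the existence of a double cover structure $X \to \FF_0 = \PP^1 \times \PP^1$. The natural strategy is to pass through the lattice $U(2) = \langle E_1, E_2 \rangle$ arising from the double cover and to exploit the construction just recalled, in which a rational curve $C \in |\cO_{\FF_0}(1,d)|$ meeting the branch locus $B$ with even multiplicities produces a splitting $\pi^* C = D_1 + D_2$ and hence a sublattice $\langle E_1, E_2, D_1 \rangle \cong U \oplus \langle -2(2d+4) \rangle$ inside $\NS(X)$.

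For the reverse implication ($k$ even and $k \ge 4$ $\Rightarrow$ $X$ is a double cover of $\FF_0$), I would write $k = 2d+4$ for an integer $d \ge 0$, which is exactly the parity-and-size constraint (since $k$ even and $k \ge 4$ forces $k \in \{4, 6, 8, \dots\}$, i.e.\ $d = (k-4)/2 \ge 0$). The goal is then to exhibit a branch curve $B \in |\cO_{\FF_0}(4,4)|$ together with a rational curve $C \in |\cO_{\FF_0}(1,d)|$ tangent to $B$ (i.e.\ meeting it with even multiplicities) so that the resulting lattice $\langle E_1, E_2, D_1 \rangle \cong U \oplus \langle -2k \rangle$ agrees with the full $\NS(X)$. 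By the surjectivity of the period map for lattice-polarized K3 surfaces, it suffices to check that a generic such configuration produces a K3 with Néron--Severi lattice of rank exactly $3$ equal to $U \oplus \langle -2k \rangle$; a dimension count on the family of pairs $(B, C)$ with $C$ tangent to $B$ at $2d+2$ points should confirm that the generic member has Picard rank $3$. The lattice identification itself comes directly from the displayed matrix computation preceding the proposition.

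For the forward implication ($X$ a double cover of $\FF_0$ $\Rightarrow$ $k$ even and $k \ge 4$), I would argue that the double cover structure forces $U(2) \cong \langle E_1, E_2 \rangle \hookrightarrow \NS(X) = U \oplus \langle -2k \rangle$. The key constraint is that $U(2)$ embeds primitively (or at least embeds) into $U \oplus \langle -2k \rangle$ only for suitable $k$. The discriminant form of $U(2)$ is $u_1$ on $(\ZZ/2)^2$, and I would compute which values of $k$ permit such an embedding: the point is that $U(2)$ requires the discriminant group of $U \oplus \langle -2k \rangle$, namely $\ZZ/2k$, to contain a $2$-elementary piece matching $U(2)$, which forces $k$ to be even. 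The bound $k \ge 4$ should follow because for $k \in \{1, 2, 3\}$ no primitive embedding $U(2) \hookrightarrow U \oplus \langle -2k \rangle$ with the correct orthogonal data exists, or because the class $E_2 - E_1$ would yield an additional root contradicting $\NS(X) = U \oplus \langle -2k\rangle$.

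The main obstacle I expect is the forward implication: specifically, pinning down exactly when $U(2)$ embeds primitively into $U \oplus \langle -2k \rangle$ and ruling out the small cases $k = 2$ (which is even but must be excluded) is delicate. This is essentially a discriminant-form computation using \cite[Theorem 1.14.4]{Nik79} and a careful analysis of the orthogonal complement of $\langle E_1, E_2\rangle$ inside $\NS(X)$. The subtlety is that evenness of $k$ alone is necessary but not sufficient, and I would need to verify that the specific case $k = 2$ fails because the resulting configuration would force extra curves in the branch locus or an incompatible gluing of discriminant forms, whereas all even $k \ge 4$ genuinely arise from the tangency construction above.
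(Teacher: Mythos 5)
Your proposal has two genuine gaps, and they sit exactly where the paper's proof does its real work.

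First, the exclusion of $k=2$. You suggest it should follow either because no primitive embedding $U(2)\hookrightarrow U\oplus\langle -4\rangle$ exists, or because $E_2-E_1$ would be an extra root. Both mechanisms fail. The class $E_2-E_1$ has square $-2E_1E_2=-4$, so it is not a root. More seriously, $U(2)$ \emph{does} embed primitively into $U\oplus\langle -4\rangle$: if $u_1,u_2$ is a hyperbolic basis of $U$ and $x$ generates $\langle -4\rangle$, then $\langle u_1,\; u_1+2u_2+x\rangle$ is isometric to $U(2)$ (both generators are isotropic and pair to $2$) and is primitive, since a rational combination $(\alpha+\beta)u_1+2\beta u_2+\beta x$ is integral exactly when $\alpha,\beta\in\ZZ$. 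So the discriminant-form gluing is compatible and there is no lattice-theoretic obstruction at all; your fallback hope of an ``incompatible gluing of discriminant forms'' is provably closed off. The obstruction for $k=2$ is geometric, and this is how the paper argues: assuming $X$ with $\NS(X)=U\oplus\langle -4\rangle$ were a double cover, it takes the basis $\{E_1,E_2,D\}$ with $\langle E_1,E_2\rangle\cong U(2)$ spanned by the pullbacks of the rulings, uses Riemann--Roch to replace $D$ by a section $S$ of the elliptic pencil $|E_1|$ (adding a suitable multiple of $E_2$ gives an effective class of fiber degree $1$), and then the determinant of the Gram matrix of $\{E_1,E_2,S\}$ forces $E_2\cdot S=-1$, contradicting that $E_2$ is nef and $S$ is an irreducible curve. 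Nefness and effectivity, not discriminant forms, kill $k=2$.

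Second, your converse direction proves the wrong quantifier. The proposition asserts that \emph{every} $X$ with $\NS(X)\cong U\oplus\langle -2k\rangle$, $k\geq 4$ even, can be realized as a double cover of $\FF_0$. Your argument --- build the family of pairs $(B,C)$ with $C$ tangent to $B$, check by a dimension count that the generic member has Picard number $3$, and invoke surjectivity of the period map --- at best shows that this family dominates $\cM_{2k}$, i.e.\ that a \emph{generic} such $X$ is a double cover. Dominance only gives a dense constructible image, and a surface whose N\'eron--Severi lattice is exactly $U\oplus\langle -2k\rangle$ can still lie outside that image; nothing in your argument transfers the double-cover structure to a given $X$. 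The paper avoids this entirely by arguing on an arbitrary $X$: writing $\NS(X)$ in the basis with $E_1^2=E_2^2=0$, $E_1E_2=2$, $E_1D=1$, $E_2D=d$, $D^2=-2$, the map $(|E_1|,|E_2|):X\to\FF_0$ is a morphism of degree $2$, and the branch curve must be smooth, since a simple singularity would produce a $(-2)$-class orthogonal to $\langle E_1,E_2\rangle$, which is incompatible with $\rk\NS(X)=3$ and $\NS(X)\cong U\oplus\langle -2k\rangle$ for $k\geq 4$. Only the parity claim in your forward direction (double cover $\Rightarrow$ $k$ even) matches the paper's argument and is sound in spirit, though it should be phrased as the paper does --- a rank-$3$ even lattice containing $U(2)$ primitively has determinant divisible by $4$ --- rather than asking the cyclic group $\ZZ/2k$ to contain a $2$-elementary piece, and it does require knowing that the pullback embedding is primitive (a non-primitive copy of $U(2)$, e.g.\ $\langle u_1,2u_2\rangle\subset U$, exists for every $k$).
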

\begin{proof}
If $X$ is a double cover of $\FF_0$, the pullback map induces a primitive embedding
$$U(2)\hookrightarrow \NS(X)=U \oplus \langle -2k\rangle.$$
Any even lattice of rank $3$ containing primitively $U(2)$ has discriminant divisible by $4$, so we conclude that $k=\frac{1}{2}\det(\NS(X))$ is even.

Conversely assume that $\NS(X)=U\oplus \langle -2k\rangle$ for a certain $k\ge 4$ even. Then as above we have an isomorphism
$$\begin{pmatrix}
0 &2 &1\\
2 &0 &d\\
1 &d &-2
\end{pmatrix}\cong U\oplus \langle -2k\rangle $$
for $d=\frac{1}{2}(k-4)\ge 0$, so there are two genus one fibrations $|E_1|,|E_2|:X\rightarrow \PP^1$ induced by the two elements $E_1,E_2$ of the basis of square zero. We can now consider the surjective map $$\pi=(|E_1|,|E_2|):X\rightarrow \FF_0.$$ 
It is a morphism of degree $2$, since the preimage of any point of $\FF_0$ consists of the two points of intersection of two elliptic curves in $|E_1|$ and $|E_2|$, as $E_1E_2=2$. Consider the branch divisor $B$; if $B$ is smooth, then $\pi$ is a double cover, as claimed. Assume by contradiction that $B$ is singular. $B$ must have simple singularities, since otherwise the canonical divisor of $X$ would be strictly negative. Thus $X$ is the desingularization of the double cover $\widetilde{\pi}:\widetilde{X}\rightarrow \FF_0$ branched over $B$, and therefore $\NS(X)$ contains the class of a smooth rational curve orthogonal to $U$. This is however absurd, since $\rk{\NS(X)}=3$ and $\NS(X)\not\cong U\oplus A_1(-1)$.

It only remains to deal with the case $k=2$, so consider a K3 surface $X$ with $\NS(X)=U\oplus \langle -4\rangle$. If by contradiction $X$ is a double cover of $\FF_0$, then $\NS(X)$ contains primitively $U(2)$, so that
$$U\oplus \langle -4\rangle \cong \begin{pmatrix}
0 &2 &a\\
2 &0 &b\\
a &b &-2c
\end{pmatrix}$$
for $a,b,c\in \ZZ$, $c\ge 1$. Say that this isomorphism is given by the choice of a basis $\{E_1,E_2,D\}$. The determinant of $\NS(X)$ is $4$, and this forces $ab+2c=1$. Thus $a,b$ are odd, and without loss of generality $a<0$, $b>0$. Now choose $n\ge 0$ such that $a+2n=1$ and consider the divisor $D+nE_2$. It is effective by Riemann-Roch, since
$$(D+nE_2)^2=-2c+2nb=-2c+b(1-a)=-2c-ab+b=b-1\ge 0$$
and $D+nE_2$ has intersection $1\ge 0$ with the nef divisor $E_1$. Moreover $(D+nE_2)E_1=1$ means that $D+nE_2$ coincides with $kE_1+S$ for a certain $k\ge 0$ and a section $S$ of the elliptic pencil $|E_1|$. In other words, $\NS(X)$ is generated by the three elements $E_1,E_2,S$. However the intersection form of $X$ with respect to this basis is
$$\begin{pmatrix}
0 &2 &1\\
2 &0 &\alpha\\
1 &\alpha &-2
\end{pmatrix}$$
and this matrix has determinant $4$ only if $\alpha=-1$, which is a contradiction, as $E_2$ is nef and $S$ is effective.
\end{proof}

\begin{rmk} \label{oss:all}
Let $X$ be a K3 surface with $\NS(X)=U\oplus \langle -2k\rangle$ for a certain $k\ge 4$ even. Then an argument as above shows that a basis of $\NS(X)$ is given by $\{E_1,E_2,D\}$ with intersection matrix
$$ \begin{pmatrix}
0 &2 &1\\
2 &0 &d\\
1 &d &-2
\end{pmatrix}$$
where $d=\frac{1}{2}(k-4)$, $\pi=(|E_1|,|E_2|):X\rightarrow \FF_0$ is the double cover branched over a $(4,4)$-curve $B$, and $C=\pi(D)$ is a smooth $(1,d)$-curve meeting $B$ with even multiplicities.
\end{rmk}

\subsection{Double covers of $\FF_4$}
Consider the \textit{Hirzebruch surface} \label{subsec:F_4} $\FF_4:=\PP(\cO_{\PP^1}\oplus \cO_{\PP^1}(4))$. We denote by $p:\FF_4\rightarrow \PP^1$ the $\PP^1$-bundle structure. We have that $\Pic(\FF_4)=\ZZ \langle f,s\rangle$, where $f$ is the class of a fiber $F$ of the projection $p$, while $s$ is the class of the unique curve $S\subseteq \FF_4$ with negative self-intersection. The intersection form on $\Pic(\FF_4)$ with respect to this basis is
$$\begin{pmatrix}
0 &1 \\
1 &-4 
\end{pmatrix}\cong U.$$
The canonical bundle of $\FF_4$ is given by $K_{\FF_4}=-2s-6f$. Notice that $\varphi=\varphi_{|s+4f|}:\FF_4\rightarrow C_4$ is the desingularization of the quartic cone $C_4\subseteq \PP^5$ over the normal rational curve $C=\Img(|\cO_{\PP^1}(4)|)\subseteq \PP^4$.

Now consider the double cover $\pi:X\rightarrow \FF_4$ branched over a curve $B\in |-2K_{F_4}|=|4s+12f|$. The linear system $|4s+12f|$ has a fixed part, given by the curve $S$, and a moving part $|3s+12f|$. Assume that $B$ splits as the sum $S+B_0$, where $B_0\in |3s+12f|$ is a smooth irreducible curve disjoint from $S$, as $s(3s+12f)=0$. Then the surface $X$ is a smooth K3 surface. The pullback $E=\pi^*F$ is a smooth elliptic curve, since the restricted double cover $E\rightarrow F$ is branched over $(4s+12f)f=4$ points. Moreover $\pi$ is totally ramified over $S\subseteq B$, so $\pi^*S=2C$, where $C=\pi^{-1}(S)\cong \PP^1$ is a smooth rational curve. Since $EC=\frac{1}{2}(\pi^*F)(\pi^*S)=FS=1$, we have a primitive embedding
$$\begin{pmatrix}
0 &1 \\
1 &-2 
\end{pmatrix}\cong U\hookrightarrow \NS(X).$$
For a very general branch divisor $B$, we simply have $\NS(X)\cong U$.

Consider the linear system $|s+2kf|$ for $k\ge 2$. Its general member $D$ is a smooth rational curve meeting $F$ in $1$ point, $S$ in $2k-4$ points and $B$ in $(s+2kf)(4s+12f)=8k-4$ points. Assume further that the curve $D$ intersects the branch divisor $B$ with even multiplicities. Then Lemma \ref{lemma:even} ensures that the pullback $\pi^*D=D_1+D_2$ splits into two disjoint components $D_1,D_2\cong \PP^1$. This implies that there exists an embedding (not necessarily primitive)
$$\langle E,C,D_1\rangle = \begin{pmatrix}
0 &1 &1\\
1 &-2 &k-2\\
1 &k-2 &-2
\end{pmatrix}\cong U\oplus \langle -2k\rangle \hookrightarrow \NS(X),$$
since $D_1E=\frac{1}{2}(\pi^*D)(\pi^*F)=DF=1$ and $D_1C=\frac{1}{4}(\pi^*D)(\pi^*S)=\frac{1}{2}DS=k-2$.

\begin{prop}
Every elliptic K3 surface $X$ is the desingularization of a double cover of the Hirzebruch surface $\FF_4$.
\end{prop}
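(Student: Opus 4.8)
The plan is to use the elliptic involution together with the Weierstrass model, thereby reducing the statement to a controlled quotient computation. First I would recall that any elliptic K3 surface $\pi\colon X\to \PP^1$ with zero section $O$ admits a Weierstrass model $W\subset \PP(\cO_{\PP^1}\oplus\cO_{\PP^1}(4)\oplus\cO_{\PP^1}(6))$, cut out by an equation of the form $y^2=x^3+Ax+B$ with $A\in H^0(\PP^1,\cO(8))$ and $B\in H^0(\PP^1,\cO(12))$, obtained by contracting the fiber components disjoint from $O$; it has at worst rational double points and $X\to W$ is the minimal (crepant) resolution (cf.\ \S\ref{subsec:Weierstrass}). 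In particular, it suffices to realize $X$ (or its Weierstrass model) as a double cover of $\FF_4$, since then $X$ is by construction the desingularization of that cover.

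The central idea is to quotient by the \emph{elliptic involution} $\iota$, acting fiberwise as inversion with respect to $O$, i.e.\ $y\mapsto -y$ in Weierstrass coordinates. Its fixed locus is $O\sqcup\{y=0\}$, a smooth curve, so $Y:=X/\iota$ is a smooth rational surface, and the elliptic fibration descends to a $\PP^1$-fibration $Y\to\PP^1$ whose general fiber is $E/\iota\cong\PP^1$. I would then identify the relevant minimal ruled surface as $\FF_4$ by analyzing the image $\bar O$ of the zero section: it is a section of $Y\to\PP^1$, and since the double cover $X\to Y$ is totally ramified along $\bar O$ we have $\psi^{*}\bar O=2O$, whence $2\,\bar O^{2}=(\psi^{*}\bar O)^{2}=(2O)^{2}=4O^{2}=-8$ and therefore $\bar O^{2}=-4$. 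As $O$ is disjoint from the reducible fiber components, $\bar O$ avoids the vertical $(-1)$-curves, so contracting these yields $Y\to\FF_4$ with $\bar O$ mapping isomorphically onto the negative section $S$ of $\FF_4$.

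It remains to identify the branch divisor and conclude. It takes the form $B=S+B_0$, where $S$ is the image of $O$ (along which $\psi$ is totally ramified) and $B_0$ is the image of the two-torsion trisection $\{y=0\}$. A straightforward class computation gives $B\in|{-2K_{\FF_4}}|=|4s+12f|$, hence $B_0\in|3s+12f|$, exactly matching the numerical data of \S\ref{subsec:F_4} that makes the double cover K3-like. Reducible and additive Kodaira fibers of $X$ translate into simple (ADE) singularities of $B_0$, so the double cover of $\FF_4$ branched over $B=S+B_0$ is a surface with rational double points whose minimal resolution is precisely $X$, as claimed.

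The main obstacle is the identification in the second step: controlling the indeterminacy of the ``forget $y$'' projection along the zero section (whose points are exactly where $W$ meets the center of projection $[0:1:0]$) and proving that the ruled quotient is the smooth Hirzebruch surface $\FF_4$, rather than its contraction to the quartic cone $C_4$ or a nonminimal blow-up. The self-intersection computation $\bar O^{2}=-4$, combined with the fact that the Weierstrass fibration carries only irreducible fibers so that the quotient ruling has no reducible fibers, is what pins down the target as $\FF_4$ and rules out these alternatives; verifying that the singularities of $B_0$ are simple (so that the desingularization of the cover is exactly $X$) is the remaining point of care.
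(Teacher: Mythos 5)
You take a genuinely different route from the paper. The paper never touches the elliptic involution: it considers the nef divisor $4E+2C$ (the pullback of the hyperplane class of the quartic cone), uses Saint-Donat's results to show that $\varphi_{|4E+2C|}$ is a degree-two morphism onto a singular surface of minimal degree $4$ in $\PP^5$, identifies this image with the quartic cone $C_4$, and factors the map through the minimal resolution $\FF_4\rightarrow C_4$. Your quotient-by-inversion construction is the classical alternative and can be made to work, but as written two of its key assertions fail exactly in the case the proposition must cover, namely when $X$ has reducible fibers.

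First, the fixed locus of $\iota$ on $X$ is not $O\sqcup\{y=0\}$ in general. The curve $\{y=0\}$ lives on the Weierstrass model $W$ and is singular precisely at the singular points of $W$ (a node under an $I_2$ or type $III$ fiber, a cusp under a type $IV$ fiber, an ordinary triple point under $I_0^*$); on $X$ one must take its strict transform, and in addition entire fiber components can be pointwise fixed: for an $I_0^*$ fiber the involution preserves each of the four reduced components, hence fixes their four intersection points with the central component, hence fixes that central component pointwise. What your argument actually needs is that the fixed locus on $X$ has no \emph{isolated} points, so that $Y=X/\iota$ is smooth; this is true (it holds for any non-symplectic involution on a K3, e.g.\ by the holomorphic Lefschetz formula), but it is a fact requiring proof, not a consequence of your description of the fixed locus. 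Second, the claim that $\overline{O}$ avoids the vertical $(-1)$-curves is false: $O$ meets the identity component $\Theta_0$ of every fiber, and for an $I_2$ fiber $\Theta_0$ is $\iota$-invariant and maps two-to-one onto its image, so $\overline{\Theta}_0^2=\Theta_0^2/2=-1$; the corresponding fiber of $Y$ is a union of two $(-1)$-curves, one of which meets $\overline{O}$. The repair is different from what you wrote: since $\overline{O}\cdot F=1$, the section meets exactly one component of each fiber, and that component has multiplicity one; on the other hand every reducible fiber of a genus-zero fibration contains either two distinct $(-1)$-curves or a single $(-1)$-curve of multiplicity $\geq 2$ (because $\sum_i m_i(K_Y\cdot C_i)=-2$ and $K_Y\cdot C_i\geq 0$ unless $C_i^2=-1$), hence always a $(-1)$-curve disjoint from $\overline{O}$. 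Contracting such curves inductively reaches a relatively minimal model without changing $\overline{O}^2=-4$, which then forces $\FF_4$. Relatedly, your closing appeal to ``the Weierstrass fibration carries only irreducible fibers, so the quotient ruling has no reducible fibers'' conflates $X/\iota$ with $W/\iota$: the first statement is about $W$, while your smoothness claim was about $X/\iota$, whose ruling \emph{does} have reducible fibers.

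All of these issues disappear if you quotient $W$ rather than $X$: affine-locally the invariants of $y\mapsto -y$ on $\CC[x,y,t]/(y^2-x^3-Ax-B)$ are exactly $\CC[x,t]$, and near the zero section (local fiber coordinate $u$ with $\iota(u)=-u$) the invariant is $u^2$. This exhibits $W/\iota$ as the smooth surface $\PP(\cO_{\PP^1}\oplus\cO_{\PP^1}(4))=\FF_4$, identifies $W$ itself as the double cover of $\FF_4$ branched over $S+B_0$ with $B_0\in|3s+12f|$ having only ADE singularities, and $X\rightarrow W$ is the minimal resolution --- which is precisely the statement to be proved, with no fixed-locus analysis on $X$ and no contraction argument needed.
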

\begin{proof}
Assume that $U\hookrightarrow \NS(X)$, and denote by $E,C$ the smooth curves in $X$ generating $U$ such that $E^2=0$, $C^2=-2$. Consider the linear system $|4E+2C|$. By \cite[Corollary 8.1.6]{Huy16} the divisor $4E+2C$ is nef, as it has non-negative intersection with every smooth rational curve. Moreover $4E+2C$ has intersection $0$ with the curve $C$. Since $(4E+2C)^2=8$ and $\dim{|4E+2C|}=5$, $\psi=\varphi_{|4E+2C|}:X\rightarrow \PP^5$ is a morphism onto a surface $Y\subseteq \PP^5$ contracting $C$. $C$ is a smooth $(-2)$-curve, so $Y$ is singular. Now the elliptic curve $E$ has intersection $(4E+2C)E=2$ with $4E+2C$, so $\psi$ has degree $2$ by \cite[Theorem 5.2]{Sai74}. This implies that $\deg(Y)=4$, so $Y\subseteq \PP^5$ is a singular surface of minimal degree, hence $Y$ is the quartic cone $C_4$ (see \cite{dPe87}). Therefore $\psi$ factors through the minimal resolution of $C_4$, which is $\FF_4$, giving a morphism $\pi:X\rightarrow \FF_4$ of degree $2$. Now we can repeat the argument in the proof of Proposition \ref{prop:F0}, obtaining that $X$ is the desingularization of a double cover of $\FF_4$.
\end{proof}

\begin{rmk}
Every K3 surface $X$ with $\NS(X)=U\oplus \langle -2k\rangle$ for a certain $k\ge 2$ can be obtained as a double cover $\pi:X\rightarrow \FF_4$ branched over a smooth curve $B\in |4s+12f|$ admitting a rational curve $D\in |s+2kf|$ intersecting $B$ with even multiplicities. 

If instead $X$ is a K3 surface with $\NS(X)=U\oplus \langle -2\rangle$, then it is the desingularization of the double cover of $\FF_4$ branched over a curve $B$ with a unique singularity of type $A_1$.
\end{rmk}

\subsection{Weierstrass fibrations}\label{subsec:Weierstrass}
Let $X$ be a smooth K3 surface. Recall that $X$ is called \textit{elliptic} if it admits an elliptic fibration, i.e.\ a morphism $\pi: X\rightarrow \PP^1$ whose general fiber is a curve of genus one, together with a distinguished section. The N\'eron-Severi group of an elliptic K3 surface contains primitively a copy of the hyperbolic plane $U$, spanned by the classes of the fiber and the zero section of the elliptic fibration. 

Let $X$ be a smooth elliptic K3 surface. By \cite[Section II.3]{Mir89} $X$ is the desingularization of a \textit{Weierstrass fibration} $\pi':Y\rightarrow \PP^1$, where $Y$ is defined by an equation
\begin{equation} \label{eq:projweierstrass}
    Y^2Z=X^3+AXZ^2+BZ^3
\end{equation}
in $\PP(\cO_{\PP^1}(4)\oplus \cO_{\PP^1}(6)\oplus \cO_{\PP^1})$ with $A\in H^0(\cO_{\PP^1}(8))$ and $B\in H^0(\cO_{\PP^1}(12))$ minimal and with $\Delta=4A^3+27B^2$ not identically zero. Conversely, every such Weierstrass fibration desingularizes to a smooth elliptic K3 surface. We will usually restrict to the chart $\{Z \ne 0\}$ over the affine base $\Aff^1_t\subseteq \PP^1$, where the equation (\ref{eq:projweierstrass}) becomes
\begin{equation} \label{eq:weierstrass}
  y^2=x^3+A(t)x+B(t),  
\end{equation}
with $A$ and $B$ polynomials in $t$ of degree at most $8$ and $12$ respectively. Notice that this is the equation of the generic fiber of the Weierstrass fibration, which is an elliptic curve over $\CC(t)$. Under this identification, sections of the fibration $\pi$ (or $\pi'$) correspond to $\CC(t)$-rational points of equation (\ref{eq:weierstrass}). In particular the distinguished zero section is located at the point at infinity $S_0=(0:1:0)$. Moreover we will write $S=(u(t),v(t))$ to denote the section $S$ of $\pi$ corresponding to the $\CC(t)$-rational point $(u(t),v(t))$ of equation (\ref{eq:weierstrass}). By the above description, $u,v\in \CC(t)$ are rational functions of degree at most $4,6$ respectively. 

\begin{rmk} \label{rmk:weiestrass}
Let $X$ be a $U\oplus \langle -2k \rangle$-polarized K3 surface. If $k \ge 2$, the given elliptic fibration on $X$ admits an extra section $S$ such that $S S_0=k-2$. This follows from the isomorphism of lattices
$$U\oplus \langle -2k \rangle \cong \begin{pmatrix}
0 &1 &1\\
1 &-2 &k-2\\
1 &k-2 &-2
\end{pmatrix}.$$
Conversely, if $X$ is an elliptic K3 surface and $S$ is an extra section with $SS_0=k-2$, then there exists an embedding
$$U\oplus \langle -2k \rangle \hookrightarrow \NS(X).$$
This embedding is not necessarily primitive. However, it is primitive if the lattice $U\oplus \langle -2k\rangle$ has no non-trivial overlattices (for instance if $2k$ is square-free, cf. \cite[Proposition 1.4.1]{Nik79}).

\end{rmk}

\section{Unirationality of $\cM_{2k}$ for small $k$}\label{sec:unirational}

The aim of this section is to prove Theorem \ref{thm:unirational}, i.e.\ the unirationality of $\cM_{2k}$ for $k < 11$ and $k\in \{13,$ $16,$ $17,$ $19,$ $21,$ $25,$ $26,$ $29,$ $31,$ $34,$ $36,$ $37,$ $39,$ $41,$ $43,$ $49,$ $59,$ $61,$ $64\}$. For some of the cases we will use the geometric constructions of Section \ref{sec:constructions}. For the others, we will find projective models of $U\oplus \langle -2k\rangle$-polarized K3 surfaces given by (quasi-)polarizations of degree $\le 8$. More precisely, the strategy will consist in finding $\ZZ$-bases of $U\oplus \langle -2k\rangle$ given by the (quasi-)polarization and $(-2)$-curves of small degree. 

The unirationality of $\cM_{56}$ was proved in \cite{BH17} using relative canonical resolutions.

\subsection{$k=1$}
The variety $\cM_2$ is the moduli space of $U\oplus \langle -2 \rangle$-polarized K3 surfaces. If $X$ is a general K3 surface in $\cM_2$, then $X$ is the desingularization of a Weierstrass fibration $Y$ with an $A_1 $ singularity. Hence $X$ admits an elliptic fibration with a unique reducible fiber, consisting of two irreducible smooth rational curves. A quick inspection of the Kodaira fibers \cite[Table I.4.1]{Mir89} yields that this reducible fiber can be either of type $I_2$ (two smooth rational curves meeting transversely at two distinct points) or \textit{III} (two smooth rational curves simply tangent at one point). This depends on whether the $A_1$ singularity on $Y$ belongs to a nodal or cuspidal rational curve respectively. After moving the singular fiber to $t=0$, $Y$ can be written as a Weierstrass fibration
$$y^2=x^3+a(t)x^2+b(t)x+c(t)$$
satisfying $t \mid b(t)$ and $t^2 \mid c(t)$. Up to a change of coordinates in $x$, this equation is equivalent to the one in (\ref{eq:weierstrass}). Conversely, a general such Weierstrass equation desingularizes to an elliptic K3 surface with an $I_2$ or a \textit{III} fiber at $t=0$. From this description we can define a dominant rational  map
$$\cP_2:=\{(a,b,c)\in H^0(\cO_{\PP^1}(4)) \times H^0(\cO_{\PP^1}(8))\times H^0(\cO_{\PP^1}(12)) : t \mid b(t),\ t^2 \mid c(t)\} \dashrightarrow \cM_2$$
sending the polynomials $(a,b,c)$ into the isomorphism class of the desingularization of the corresponding Weierstrass equation. Since $\cP_2$ is an affine space, $\cM_2$ is unirational.

\subsection{$k=2$}
An $U\oplus \langle -4\rangle$-polarized K3 surface $X$ is an elliptic K3 surface admitting a section $S$ disjoint from the zero section $S_0$ of the given elliptic fibration by Remark \ref{rmk:weiestrass}. Let
$$y^2=x^3+a(t)x^2+b(t)x+c(t)$$
be a Weierstrass equation for $X$, where the point at infinity $S_0=(0:1:0)$ is the zero section. Let $S=(u(t),v(t))$ be the extra section. Notice that the points of intersection of $S$ and $S_0$ coincide with the poles of $v$ (or equivalently of $u$), as $(u(t_0):v(t_0):1)=(0:1:0)$ if and only if $t_0$ is a pole for $v$. But $S$ and $S_0$ are disjoint by assumption, so $u,v$ are simply polynomials of degree at most $4,6$ respectively. After the change of variables $x\mapsto x-u$, $y\mapsto y-v$, the Weierstrass equation becomes
\begin{equation} \label{eq:w}
    y^2+2v(t)y=x^3+d(t)x^2+e(t)x,
\end{equation}
for polynomials $d,e,v$ of degree at most $4,8,6$ respectively. Conversely, a general Weierstrass equation as in (\ref{eq:w}) defines an elliptic K3 surface containing the disjoint sections $S_0=(0:1:0)$, $S=(0,0)$, and therefore an $U\oplus \langle -4\rangle$-polarized K3 surface. This implies that there exists a dominant rational map
$$\cP_4:=\{(d,e,v)\in H^0(\cO_{\PP^1}(4)) \times H^0(\cO_{\PP^1}(8))\times H^0(\cO_{\PP^1}(6)) \} \dashrightarrow \cM_4.$$
$\cP_4$ is an affine space, so $\cM_4$ is unirational.

\subsection{$k=3$}
Let $X$ be the desingularization of a double cover of $\PP^2$ branched over a sextic $B$ with an $A_2$ singularity. Then $X$ is a K3 surface with
$$\langle 2 \rangle \oplus A_2(-1) \cong U \oplus \langle -6\rangle \hookrightarrow \NS(X).$$
Since $6$ is square-free, this embedding is primitive. Conversely, if $X$ is a K3 surface with $\NS(X)\cong \langle 2 \rangle \oplus A_2(-1)$, the linear system associated to the first element of the basis induces a morphism $X\rightarrow \PP^2$ of degree $2$ contracting the two $(-2)$-curves, so $X$ is the desingularization of a double cover of $\PP^2$ branched over a sextic with an $A_2$ singularity. Up to a projective transformation, we can assume that the sextic $B\subseteq \PP^2$ has an $A_2$ singularity at $P=(0:0:1)\in \PP^2$, and that the unique line of $\PP^2$ meeting $B$ in $P$ with multiplicity $3$ is $V(x_0)$. This forces $B$ to be given by an equation $f(x_0,x_1,x_2)\in H^0(\cO_{\PP^2}(6))$ with the coefficients of $x_2^6$, of $x_0x_2^5$, of $x_1x_2^5$, of $x_0x_1x_2^4$ and of $x_1^2x_2^4$ all being zero. We denote by $\cP_6$ the linear subspace of $H^0(\cO_{\PP^2}(6))$ consisting of all such polynomials. Therefore there exists a dominant rational map
$$\cP_6\dashrightarrow \cM_6.$$
$\cP_6$ is an affine space, hence $\cM_6$ is unirational.

\subsection{$k=4$}
By Proposition \ref{prop:F0} and Remark \ref{oss:all} a general $U\oplus\langle -8\rangle$-polarized K3 surface $X$ is the double cover of $\FF_0$ branched over a smooth $(4,4)$-curve $B$ admitting a line $L$ simply tangent to $B$ at $2$ points. Choose coordinates $((x_0:x_1),(y_0:y_1))$ on $\FF_0=\PP^1\times \PP^1$ such that $L=V(x_0)$. If $B$ is given by a bihomogeneous polynomial $f(x_0,x_1,y_0,y_1)$ of bidegree $(4,4)$, then $B$ is tangent to $L$ at $2$ points if and only if
$$f(x_0,x_1,y_0,y_1)=x_0g(x_0,x_1,y_0,y_1)+x_1^4h_1(y_0,y_1)^2h_2(y_0,y_1)^2$$
for $g\in H^0(\cO_{\FF_0}(3,4))$, $h_1,h_2\in H^0(\cO_{\PP^1}(1))$. Therefore we get a dominant rational map
$$\cP_8:=\{(g,h_1,h_2)\in H^0(\cO_{\FF_0}(3,4))\times H^0(\cO_{\PP^1}(1)) \times H^0(\cO_{\PP^1}(1)) \}\dashrightarrow \cM_8$$
sending $(g,h_1,h_2)$ to the isomorphism class of the double cover of $\FF_0$ branched along the divisor $f=0$ defined above. It follows that $\cM_8$ is unirational.

\subsection{$k=5$}
Let $X$ be the desingularization of a double cover of $\PP^2$ branched over a sextic $B$ with a simple node and admitting a tritangent line. Then $X$ is a K3 surface with
$$\begin{pmatrix}
2 &1 &0\\
1 &-2 &0\\
0 &0 &-2
\end{pmatrix} \cong U \oplus \langle -10\rangle \hookrightarrow \NS(X)$$
(see Lemma \ref{lemma:even}). Since $10$ is square-free, this embedding is primitive. Conversely, let $X$ be a K3 surface with $\NS(X)\cong U\oplus \langle -10\rangle$ and take a basis $\{H,L,C\}$ with intersection matrix as above. The linear system $|H|$ induces a morphism $X\rightarrow \PP^2$ of degree $2$ contracting $C$. Let $Y\rightarrow \PP^2$ denote the double cover obtained contracting $C$. Then $Y$ has a singular point of type $A_1$, so the branch locus $B\subseteq \PP^2$ has a node. Moreover $L$ is mapped onto a line of $\PP^2$ meeting $B$ with even multiplicities, so generically it will be a tritangent of $B$. Now, up to a projective transformation, we can assume that the tritangent line is given by $V(x_0)$, so that $B$ is given by an equation of the form
\begin{equation} \label{eq:5}
    f=x_0g(x_0,x_1,x_2)+h_1(x_1,x_2)^2h_2(x_1,x_2)^2h_3(x_1,x_2)^2.
\end{equation}
We can also assume that the node of $B$ is located at $P=(1:0:0)$. This forces the coefficients of $g$ of the terms $x_0^5,x_0^4x_1$ and $x_0^4x_2$ to be zero. We denote by $\mathcal{Q}_{10}$ the linear subspace of $H^0(\cO_{\PP^2}(5))$ consisting of all such polynomials. Then there exists a dominant rational map
$$\cP_{10}=\mathcal{Q}_{10}\times H^0(\cO_{\PP^1}(1))^3 \dashrightarrow \cM_{10}.$$
sending $(g,h_1,h_2,h_3)$ to the isomorphism class of the double cover of $\PP^2$ branched over $f$ defined as in equation (\ref{eq:5}). As $\cP_{10}$ is an affine space, $\cM_{10}$ is unirational.

\subsection{$k=6$}
By Proposition \ref{prop:F0} and Remark \ref{oss:all}, a general such K3 surface is the double cover of $\FF_0$ branched over a $(4,4)$-curve $B$ admitting a smooth $(1,1)$-curve $C$ intersecting $B$ in $4$ points with multiplicity $2$. We can choose coordinates on $\FF_0$ so that $C=V(x_0y_1-x_1y_0)$ and $B$ does not pass through the point $((0:1),(0:1))\in C$, so that the intersection $B\cap C$ is contained in the chart $W=\{x_0\ne 0, y_0\ne 0\}$, with coordinates $(1:u),(1:v)$. Say that $B$ is given by the equation
$$f(x_0,x_1,y_0,y_1)=\sum_{i+j=k+l=4}{\alpha_{ijkl}x_0^ix_1^jy_0^ky_1^l}$$
with $\alpha_{0404}=1$. Since $C|_W=V(u-v),$ the intersection $B\cap C\subseteq W$ is given by the vanishing of
$$g(u)=f(1,u,1,u)=\sum_{i+j=k+l=4}{\alpha_{ijkl}u^{j+l}}=\sum_{\eta=0}^8{\beta_\eta u^\eta},$$
where $\beta_\eta=\sum_{j+l=\eta}{\alpha_{ijkl}}$ and $\beta_8=\alpha_{0404}=1$. Now $g(u)$ has $4$ double roots at $u=\varepsilon_1,\varepsilon_2,\varepsilon_3,\varepsilon_4$ if and only if
$$g(u)=(u-\varepsilon_1)^2(u-\varepsilon_2)^2(u-\varepsilon_3)^2(u-\varepsilon_4)^2.$$
The choice of $\varepsilon_1,\varepsilon_2,\varepsilon_3,\varepsilon_4$ uniquely determines the coefficients $\beta_\eta$ for $\eta\le 7$, which in turn uniquely determine $8$ of the $\alpha_{ijkl}$. The $17$ other coefficients $\alpha_{ijkl}$ are free parameters so, if we denote them by $\alpha'_1,\ldots,\alpha_{17}'$, we have a rational dominant map
$$\cP_{12}:=\{(\varepsilon_i,\alpha_j')\in (\Aff^1)^4 \times (\Aff^1)^{17} \}\dashrightarrow \cM_{12}.$$
$\cP_{12}$ is an affine space, so $\cM_{12}$ is unirational.

\subsection{$k=8$}
By Proposition \ref{prop:F0} and Remark \ref{oss:all}, a general such K3 surface is the double cover of $\FF_0$ branched over a $(4,4)$-curve $B$ admitting a smooth $(1,2)$-curve $C$ intersecting $B$ in $6$ points with multiplicity $2$. We can choose coordinates on $\PP^3$ such that $\FF_0=V(x_0x_3-x_1x_2)$ and $C$ is the twisted cubic curve $V(x_0x_3-x_1x_2,x_1^2-x_0x_2,x_2^2-x_1x_3)$. We may also assume that $B$ does not pass through the point $(0:0:0:1)\in C$, so that the intersection $B\cap C$ is contained in the chart $W=\{x_0\ne 0\}\subseteq \PP^3$ with coordinates $(1:u:v:w)$. Then $C|_{W}=V(v-u^2,w-u^3)$, so if $B$ is given by a quartic $f(x_0,x_1,x_2,x_3)\in H^0(\cO_{\PP^3}(4))$, the intersection $B\cap C\subseteq W$ is given by the vanishing of
$$g(u)=f(1,u,u^2,u^3).$$
Now an argument as in the case $k=6$ shows that $\cM_{16}$ is unirational.\\

Now we will deal with the remaining values of $k$ by studying appropriate projective models of $U\oplus \langle -2k \rangle$-polarized K3 surfaces as complete intersections. The strategy will be similar for every degree. We are going to need the following lemma.

\begin{lemma} \label{lemma:fine}
The parameter space $\mathcal{C}_{d, n}$ of rational normal curves in $\PP^d$ of degree $d$ passing through $0\leq n\leq 3$ points of $\PP^d$ in general position is unirational and non-empty.
\end{lemma}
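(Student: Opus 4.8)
The plan is to exploit the classical fact that through $d+3$ points in general position in $\PP^d$ there passes a unique rational normal curve of degree $d$ (see e.g.\ Harris, \emph{Algebraic Geometry: A First Course}), and to parametrize the curves in $\mathcal{C}_{d,n}$ by completing the $n$ prescribed points to a full configuration of $d+3$ points. Since $n\le 3\le d+3$, there is always room to do this.

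First I would fix the $n$ points $p_1,\ldots,p_n\in \PP^d$ in general position and consider the product $(\PP^d)^{d+3-n}$ parametrizing tuples $(p_{n+1},\ldots,p_{d+3})$. On the dense open subset $U$ where $p_1,\ldots,p_{d+3}$ are in general position, the cited classical result produces a unique rational normal curve $C$ of degree $d$ through all $d+3$ points; in particular $C$ passes through $p_1,\ldots,p_n$, so $[C]\in \mathcal{C}_{d,n}$. As the unique such curve depends algebraically on the points (its equations can be written by Cramer-type formulas in the coordinates of the $p_i$), this assignment defines a rational map
$$\Psi : (\PP^d)^{d+3-n} \dashrightarrow \mathcal{C}_{d,n}.$$
Letting the first $n$ points vary as well, the same recipe gives a rational map from the rational variety $(\PP^d)^{d+3}$ onto the total incidence space, so the argument is uniform in whichever reading of $\mathcal{C}_{d,n}$ one adopts.

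Next I would verify that $\Psi$ is dominant. Given a general $[C]\in \mathcal{C}_{d,n}$, I choose $d+3-n$ further general points $p_{n+1},\ldots,p_{d+3}$ lying on $C$. Recalling that any $d+1$ points on a degree-$d$ rational normal curve in $\PP^d$ are linearly independent (the Vandermonde property of the standard parametrization), the full configuration $p_1,\ldots,p_{d+3}$ is in general position and lies on $C$; by the uniqueness statement, $C$ is exactly the curve $\Psi(p_{n+1},\ldots,p_{d+3})$. Hence $\Psi$ is dominant, and since its source is a product of projective spaces, hence rational, the target $\mathcal{C}_{d,n}$ is unirational. Non-emptiness is immediate, since the configurations used above exist for all $0\le n\le 3$.

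The main obstacle is foundational rather than geometric: one must check that ``general configuration $\mapsto$ the unique rational normal curve through it'' genuinely defines a rational map to the relevant parameter space (a locally closed subscheme of the appropriate Hilbert or Chow scheme), and that the general-position locus is open and dense. Both are standard---the former because of the explicit Cramer-type dependence of the curve on the points, the latter because general position is an open condition---but they are precisely the points needed to make the construction of $\Psi$ and its dominance rigorous.
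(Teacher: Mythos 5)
Your proof is correct, but it takes a genuinely different route from the paper's. The paper parametrizes rational normal curves by $(d+1)$-tuples of linearly independent forms $A_j\in H^0(\cO_{\PP^1}(d))$ and uses the $3$-transitivity of $\mathrm{Aut}(\PP^1)$ --- this is exactly where $n\le 3$ enters --- to normalize the preimages of the $n$ points to lie in $\{(1:0),(0:1),(1:1)\}$; the incidence conditions $P_i=(A_0(R_i):\ldots:A_d(R_i))$ then become \emph{linear} conditions on the forms, so $\mathcal{C}_{d,n}$ is dominated by a linear subspace $\widetilde{\mathcal{C}}_{d,n}\subseteq H^0(\cO_{\PP^1}(d))^{d+1}$, and non-emptiness is quoted from the classical fact about $d+3$ points. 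You instead take existence \emph{and uniqueness} of the rational normal curve through $d+3$ general points as the main input, parametrizing curves by completions of the $n$ points to a $(d+3)$-point configuration; dominance is then essentially automatic, since any $d+1$ distinct points on such a curve are linearly independent, so every curve in $\mathcal{C}_{d,n}$ is hit. Your route buys generality: it nowhere uses $n\le 3$ beyond $n\le d+3$, so it proves the statement for all $0\le n\le d+3$; its cost is the heavier classical input (uniqueness, plus the algebraic dependence of the curve on the points, which you rightly flag as the step needing care). The paper's route is self-contained linear algebra, and --- importantly for this paper --- its proof produces the explicit linear spaces $\widetilde{\mathcal{C}}_{d,n}$, which are reused downstream: in the unirationality arguments for $\cM_{2k}$, the incidence varieties $\mathcal{Z}_1$ are projective bundles over $\mathcal{Z}_2$ with fiber $\widetilde{\mathcal{C}}_{d_2,n}$. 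So the paper needs this particular model of the parameter space, not merely the abstract unirationality statement, and your argument would not directly furnish those fibrations without reworking.
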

\begin{proof}
Rational normal curves of degree $d$ are parametrized by $d+1$ linearly independent forms $A_j\in H^0(\cO_{\PP^1}(d))$, inducing the embedding
$$\begin{array}{cccc}
\varphi_{\bm{A}}: &\PP^1 &\hookrightarrow &\PP^d\\
&(u:v) &\mapsto &(A_0(u,v):\ldots:A_d(u,v)).
\end{array}$$
This shows that the parameter space $\mathcal{C}_{d,0}$ of rational normal curves is unirational, as there exists a dominant rational map
$$ H^0(\cO_{\PP^1}(d))^{d+1}\dashrightarrow \mathcal{C}_{d,0}.$$

Let $P_1,\ldots,P_n\in \PP^d$ be $n$ points in general position. The curve $C=\varphi_{\bm{A}}(\PP^1)$ passes through $P_1,\ldots,P_n$ if and only if there exist $R_1,\ldots,R_n\in \PP^1$ mapped to $P_1,\ldots,P_n$ under $\varphi_{\bm{A}}$. Since $n \le 3$, we can suppose that $\{R_1,\ldots,R_n\}$ is a subset of $\{(1:0),(0:1),(1:1)\}$ up to automorphism of $\PP^1$.

Thus $C$ passes through $P_1,\ldots,P_n$ if and only if
$$P_i=(A_0(R_i):\ldots:A_d(R_i))$$
for $1\le i \le n$. These equations define a linear subspace $\widetilde{\mathcal{C}}_{d,n}$ of $H^0(\cO_{\PP^1}(d))^{d+1}$. There exists a dominant rational map
$$\widetilde{\mathcal{C}}_{d,n} \dashrightarrow \mathcal{C}_{d,n},$$
hence the space $\mathcal{C}_{d,n}$ is unirational.
In order to see that $\mathcal{C}_{d,n}\ne \varnothing$, recall that there always exists a rational normal curve of degree $d$ in $\PP^d$ passing through $d+3$ points.
\end{proof}

\subsection{Quartic surfaces in $\PP^3$: $k\in \{10,13,16,19,26\}$}
We start with the case of polarizations of degree $4$. We consider the following isomorphisms of lattices:
$$ \begin{pmatrix}
4 &1 &1\\
1 &-2 &0\\
1 &0 &-2
\end{pmatrix} \cong U \oplus \langle -20\rangle, \quad \begin{pmatrix}
4 &2 &1\\
2 &-2 &0\\
1 &0 &-2
\end{pmatrix} \cong U \oplus \langle -26\rangle, \quad
\begin{pmatrix}
4 &3 &1\\
3 &-2 &2\\
1 &2 &-2
\end{pmatrix} \cong U \oplus \langle -32\rangle, $$
$$\begin{pmatrix}
4 &3 &1\\
3 &-2 &1\\
1 &1 &-2
\end{pmatrix} \cong U \oplus \langle -38\rangle, \quad
\begin{pmatrix}
4 &3 &3\\
3 &-2 &0\\
3 &0 &-2
\end{pmatrix} \cong U \oplus \langle -52\rangle.$$

Let $X\subseteq \PP^3$ be a smooth quartic surface containing two rational normal curves $C_1,C_2$ of degree $d_1,d_2\in \{1,2,3\}$ respectively, intersecting at $n\in \{0,1,2\}$ points. The numbers $d_1,d_2,n$ depend on $k$, as specified by the lattices above. Denote by $H$ the hyperplane class. Suppose that the lattice $\langle H,C_1,C_2\rangle$ has as intersection form one of the lattices above, and embeds into $\NS(X)$. 

We claim that this embedding is always primitive. If $k\in \{10,13,19,26\}$, then the lattice $U\oplus \langle -2k \rangle$ admits no non-trivial overlattices by \cite[Proposition 1.4.1]{Nik79}, so the embedding is necessarily primitive. If instead $k=16$, it is easy to see that the embedding is not primitive if and only if the class $C_1+C_2$ is divisible in $\NS(X)$. Since $C_1+C_2$ has square $0$ and is reduced and connected on $X$, it is primitive in $\NS(X)$, and hence the embedding is also primitive. This shows that $X$ is a $U\oplus \langle -2k \rangle$-polarized K3 surface.

Conversely, let $X$ be a K3 surface with $\NS(X)=\langle H,C_1,C_2\rangle$ and intersection matrix
$$\begin{pmatrix}
4 &d_1 &d_2\\
d_1 &-2 &n\\
d_2 &n &-2
\end{pmatrix}$$
belonging to the aforementioned list. The divisor $H$ is ample, and actually very ample by \cite[Theorem 5.2]{Sai74}, since there are no divisors $D\in \NS(X)$ of square $0$ and $DH=2$. Therefore the linear system $|H|$ induces an embedding $X\hookrightarrow \PP^3$ realizing $X$ as a quartic surface containing two rational normal curves of degree $d_1,d_2$ respectively, intersecting at $n$ points. We denote by $\cP_{2k}\subseteq |\cO_{\PP^3}(4)|$ the parameter space of such quartic surfaces in $\PP^3$. This shows that there exists a rational map $\cP_{2k}\dashrightarrow \cM_{2k}$, which is dominant as soon as $\cP_{2k}$ contains a smooth surface. In this case, the expected dimension is $\dim(\cP_{2k})=\dim(\cM_{2k})+\dim(\mathrm{SL}(4, \CC))=32$.

In order to prove that $\cP_{2k}$ contains a smooth surface, we show that there actually exist smooth quartic surfaces $X\subseteq \PP^3$ containing the union $C_1 \cup C_2$. This can be done by a concrete argument. For instance, consider the case $k=19$, where $d_1=3$, $d_2=1$ and $n=1$. Up to an automorphism of $\PP^3$, we can assume that $C_1=\{(u^3:u^2v:uv^2:v^3) \mid (u:v)\in \PP^1\}$ and that $C_1$ meets the line $C_2$ at $P=(1:0:0:0)$. Therefore we can write $C_2=V(l_1,l_2)$ for some linear equations $l_1,l_2 \in H^0(\cO_{\PP^3}(1))$ vanishing at $P$. Now a quartic surface $X=V(f)\subseteq \PP^3$ contains the union $C_1\cup C_2$ if and only if $f=l_1 k_1 +l_2 k_2$ for some $k_1,k_2 \in H^0(\cO_{\PP^3}(3))$ and
$$f(u^3,u^2v,uv^2,v^3)\equiv 0$$
as a polynomial in $(u:v)$. A straightforward computation shows that there exist such polynomials $f$ defining smooth quartic surfaces.

Alternatively one can show that $\dim(\cP_{2k})=32$ by a dimension count. This implies that $\cP_{2k}$ contains a smooth surface: otherwise, the image of the map $\cP_{2k} \dashrightarrow \cM_{2k}$ would have dimension $<17$, thus $\mathrm{dim}(\cP_{2k})< 17+\mathrm{dim}(\mathrm{SL}(4, \CC))<32$, a contradiction.

Consider for instance the case $k=26$, where $d_1=d_2=3$ and $n=0$. The short exact sequence
$$0 \rightarrow \cI_{C_1 \cup C_2}(4) \rightarrow \cO_{\PP^3}(4) \rightarrow \cO_{C_1 \cup C_2}(4) \rightarrow 0$$
induces the exact sequence
$$0 \rightarrow H^0(\cI_{C_1 \cup C_2}(4)) \rightarrow H^0(\cO_{\PP^3}(4)) \rightarrow H^0(\cO_{C_1 \cup C_2}(4)) \rightarrow H^1(\cI_{C_1 \cup C_2}(4)).$$
The last term $H^1(\cI_{C_1 \cup C_2}(4))=0$, since $C_1,C_2$ are both projectively normal and disjoint. Therefore
$$h^0(\cI_{C_1 \cup C_2}(4))=h^0(\cO_{\PP^3}(4))-h^0(\cO_{C_1 \cup C_2}(4))=35-26=9,$$
so
$$\dim(\cP_{52})=2\dim(\mathcal{T})+(h^0(\cI_{C_1 \cup C_2}(4))-1)=2\cdot 12 +8=32,$$
where $\mathcal{T}=\mathrm{SL}(4, \CC)/\mathrm{SL}(2, \CC)$ denotes the moduli space of twisted cubics in $\PP^3$.

\begin{rmk}
For the dimension count, it is important to notice that the general element in $\cP_{2k}$ is irreducible. Indeed, the space of reducible quartic surfaces in $\PP^3$ has dimension $22 < 32$.
\end{rmk}

We now study the space $\cP_{2k}$ and prove its unirationality, implying in particular that $\cM_{2k}$ is unirational, too. Fix numbers $d_1,d_2,n$ as above. Up to automorphism of $\PP^3$ we can fix a rational normal curve $C_1$ of degree $d_1$. First we choose a set of points $P_1,\ldots,P_n\in C_1$, which will be the points of intersection of $C_1$ and $C_2$. Then we choose a linear subspace $\Pi_2\subseteq \PP^3$ of dimension $d_2 \ge n-1$ containing $P_1,\ldots,P_n$, and a rational normal curve $C_2 \subseteq \Pi_2$ of degree $d_2$, passing through $P_1,\ldots,P_n$. 

Using the notations of Lemma \ref{lemma:fine}, we consider the following incidence varieties:

$$\cP'_{2k}:=\{(P_1,\ldots,P_n,\Pi_2,\bm{A},X) \in \mathrm{Sym}^n(C_1)\times \mathrm{Gr}(d_2,3)\times \widetilde{\mathcal{C}}_{d_2,n} \times |\cO_{\PP^3}(4)|:$$
$$ P_1,\ldots,P_n \in  C_2=\varphi_{\bm{A}}(\PP^1)\subseteq \Pi_2, \  X \supseteq C_1 \cup C_2\},$$
$$\mathcal{Z}_1:=\{(P_1,\ldots,P_n,\Pi_2,\bm{A}) \in \mathrm{Sym}^n(C_1)\times \mathrm{Gr}(d_2,3)\times \widetilde{\mathcal{C}}_{d_2,n} : \Pi_2\ni P_1,\ldots,P_n\},$$
$$\mathcal{Z}_2:=\{(P_1,\ldots,P_n,\Pi_2) \in \mathrm{Sym}^n(C_1)\times \mathrm{Gr}(d_2,3): \Pi_2\ni P_1,\ldots,P_n\}.$$

They are related as shown in the diagram below:
$$\begin{tikzcd}
\cP'_{2k} \arrow[dashed]{d}{p_1} \arrow{r}{\psi_1} &\mathcal{Z}_1 \arrow{r}{\psi_2} &\mathcal{Z}_2 \arrow{r}{\psi_3} & \mathrm{Sym}^n(C_1)\\
\cP_{2k} \arrow[dashed]{d}{p_2}\\
\cM_{2k}
\end{tikzcd}$$
The three maps $\psi_1,\psi_2$ and $\psi_3$ are the obvious forgetful maps. The map $p_1$ is the projection onto the last factor, while $p_2$ sends a smooth quartic surface to its isomorphism class.

In order to prove the unirationality of $\cM_{2k}$ we show that $\cP'_{2k}$ is rational. The variety $\mathrm{Sym}^n(C_1)\cong \PP^n$ is rational and $\mathcal{Z}_2$ is also rational, since it is a $\mathrm{Gr}(d_2-n, 3-n)$-bundle over $\mathrm{Sym}^n(C_1)$. Then $\mathcal{Z}_1$ is a projective bundle over $\mathcal{Z}_2$ with fiber isomorphic to $\widetilde{\mathcal{C}}_{d_2,n}$ in the proof of Lemma \ref{lemma:fine}. Finally $\mathcal{P}'_{2k}$ is a projective bundle over $\mathcal{Z}_1$ with fiber $\PP H^0(\cI_{C_1 \cup C_2}(4))$. In conclusion, it follows that $\cP'_{2k}$ is rational. Therefore $\cM_{2k}$ is unirational, since $p_1$ and $p_2$ are dominant rational maps.

\subsection{Nodal quartic surfaces in $\PP^3$: $k\in \{7, 17\}$}
We consider the following isomorphisms of lattices:
$$\begin{pmatrix}
4 &1 &0\\
1 &-2 &1\\
0 &1 &-2
\end{pmatrix} \cong U \oplus \langle -14\rangle, \quad  \begin{pmatrix}
4 &3 &0\\
3 &-2 &0\\
0 &0 &-2
\end{pmatrix} \cong U \oplus \langle -34\rangle.$$

Let $X'\subseteq \PP^3$ be a quartic surface containing a line (resp. a twisted cubic) $C_1$ and with an $A_1$ singularity on the line (resp. outside the twisted cubic). The desingularization $X$ of $X'$ contains the exceptional divisor $C_2\cong \PP^1$ over the $A_1$ singularity of $X'$. If $H$ denotes the hyperplane class, the lattice $\langle H,C_1,C_2 \rangle$ is isometric to one of the two lattices above and embeds into $\NS(X)$. The embedding is primitive, since $14$ and $34$ are both square-free. Therefore $X$ is a $U\oplus \langle -2k \rangle$-polarized K3 surface.

Conversely, let $X$ be a K3 surface with $\NS(X)=\langle H,C_1,C_2\rangle$ and intersection matrix
$$\begin{pmatrix}
4 &d &0\\
d &-2 &e\\
0 &e &-2
\end{pmatrix}$$
belonging to the list above. By \cite[Theorem 5.2]{Sai74} the linear system $|H|$ induces a rational map $X \dashrightarrow \PP^3$ contracting $C_2$ to a point $P$, and embedding $X\smallsetminus C_2 \hookrightarrow \PP^3$. This realizes $X$ as a nodal quartic surface containing a rational normal curve $C_1$ of degree $d$ and an $A_1$ singularity on (resp. outside) $C_1$ if $e=1$ (resp. $e=0$). We denote by $\cP_{2k}\subseteq |\cO_{\PP^3}(4)|$ the space of such nodal quartic surfaces in $\PP^3$. This shows that there exists a rational map $\cP_{2k}\dashrightarrow \cM_{2k}$, which is dominant as soon as $\cP_{2k}$ contains an irreducible surface with a unique $A_1$-singularity. In order to prove that, we can argue as in the case of smooth quartic surfaces above. In this case, the expected dimension is $\dim(\cP_{2k})=\dim(\cM_{2k})+\dim(\mathrm{SL}(4, \CC))=32$.

We now study the space $\cP_{2k}$ and prove its unirationality, implying in particular that $\cM_{2k}$ is unirational, too. Up to automorphism of $\PP^3$ we fix a rational normal curve $C_1$ of degree $d$, and consider the following incidence varieties:

$$\cP'_{14}:=\{(P,X) \in C_1\times |\cO_{\PP^3}(4)|:  X \supseteq C_1, \  P\in X \text{ is an $A_1$ singularity} \},$$
$$\cP'_{34}:=\{(P,X) \in \PP^3\times |\cO_{\PP^3}(4)|:  X \supseteq C_1, \ P\notin C_1, \  P\in X \text{ is an $A_1$ singularity} \}.$$
The varieties are open dense subsets of projective bundles over $C_1\cong \PP^1$ (resp. $\PP^3$), since imposing a singular point at $P$ for the quartic surface $X=V(f)$ is a linear condition on the coefficients of $f$, and a singular point is generically of type $A_1$. Therefore $\cP'_{14}$ and $\cP'_{34}$ are both rational. There exist dominant rational maps
$$\cP_{2k}' \stackrel{p_1}{\dashrightarrow} \cP_{2k} \stackrel{p_2}{\dashrightarrow} \cM_{2k},$$
where $p_1$ is the projection onto the last factor, and $p_2$ sends a nodal quartic surface to the isomorphism class of its desingularization. We conclude that $\cM_{2k}$ is unirational.

\subsection{Complete intersections of a quadric and a cubic in $\PP^4$: $k\in \{9, 21, 25, 29, 37\}$}
We continue with the case of polarizations of degree $6$. We consider the following isomorphisms of lattices:
$$ \begin{pmatrix}
6 &2 &1\\
2 &-2 &2\\
1 &2 &-2
\end{pmatrix} \cong U \oplus \langle -18\rangle, \quad \begin{pmatrix}
6 &2 &2\\
2 &-2 &1\\
2 &1 &-2
\end{pmatrix} \cong U \oplus \langle -42\rangle, \quad
\begin{pmatrix}
6 &3 &2\\
3 &-2 &2\\
2 &2 &-2
\end{pmatrix} \cong U \oplus \langle -50\rangle, $$
$$\begin{pmatrix}
6 &4 &1\\
4 &-2 &0\\
1 &0 &-2
\end{pmatrix} \cong U \oplus \langle -58\rangle, \quad
\begin{pmatrix}
6 &4 &2\\
4 &-2 &1\\
2 &1 &-2
\end{pmatrix} \cong U \oplus \langle -74\rangle.$$

Let $X\subseteq \PP^4$ be a smooth complete intersection of a quadric and a cubic containing two rational normal curves $C_1,C_2$ of degree $d_1,d_2\in \{1,2,3,4\}$ respectively, intersecting at $n\in \{0,1,2\}$ points. The numbers $d_1,d_2,n$ depend on $k$, as specified by the lattices above. Denote by $H$ the hyperplane class. Then the lattice $\langle H,C_1,C_2\rangle$ has intersection form of one of the types above, and embeds into $\NS(X)$. 

We claim that this embedding is always primitive. If $k\in \{21,29,37\}$, then the lattice $U\oplus \langle -2k \rangle$ admits no non-trivial overlattices by \cite[Proposition 1.4.1]{Nik79}, so the embedding is necessarily primitive. If instead $k\in \{9,25\}$, it is easy to see that the embedding is not primitive if and only if the class $C_1+C_2$ is divisible in $\NS(X)$. Since $C_1+C_2$ has square $0$ and is reduced and connected on $X$, it is primitive in $\NS(X)$, and hence the embedding is also primitive. This shows that $X$ is a $U\oplus \langle -2k \rangle$-polarized K3 surface.

Conversely, let $X$ be a K3 surface with $\NS(X)=\langle H,C_1,C_2\rangle$ and intersection matrix
$$\begin{pmatrix}
6 &d_1 &d_2\\
d_1 &-2 &n\\
d_2 &n &-2
\end{pmatrix}$$
belonging to the list above. The divisor $H$ is ample, and actually very ample by \cite[Theorem 5.2]{Sai74}, since there are no divisors $D\in \NS(X)$ of square $0$ and $DH=2$. Therefore the linear system $|H|$ induces an embedding $X\hookrightarrow \PP^4$ realizing $X$ as a smooth complete intersection of a quadric and a cubic containing two rational normal curves of degree $d_1,d_2$ respectively, intersecting at $n$ points. We denote by $\cP_{2k}$ the space of such complete intersections in $\PP^4$. The discussion above shows that there exists a rational map
$$\cP_{2k}\dashrightarrow \cM_{2k},$$
which is dominant as soon as $\cP_{2k}$ contains a smooth surface.
In this case, the expected dimension is $\dim(\cP_{2k})=\dim(\cM_{2k})+\dim(\mathrm{SL}(5, \CC))=41$.

In order to prove that $\cP_{2k}$ contains a smooth element, we can argue as in the case of quartic surfaces, using concrete computations or a dimension count. Let us go through one example, and we leave the rest to the reader. Consider $k=9$, so that $d_1=2$, $d_2=1$ and $n=2$. The union $C_1\cup C_2$ is contained in a plane $\Pi$, so we have a short exact sequence
$$0 \rightarrow \cI_\Pi(m) \rightarrow \cI_{C_1 \cup C_2}(m) \rightarrow \cO_\Pi(m-3)\rightarrow 0$$
for every $m\in \ZZ$. Since the $h^1$ of the first and the last term are zero for all $m\in \ZZ$, we have that $h^1(\cI_{C_1 \cup C_2}(m))=0$ for all $m \in \ZZ$. Now we can proceed with the dimension count. Choosing the conic $C_1$ counts for $11$ parameters ($6$ for the plane $\Pi$, $5$ for the conic); the choice of the line $C_2$ depends uniquely on the choice of the two points of intersection with $C_1$. The choice of a quadric containing $C_1 \cup C_2$ depends on
$$h^0(\cI_{C_1 \cup C_2}(2))-1=h^0(\cO_{\PP^4}(2))-h^0(\cO_{C_1 \cup C_2}(2))-1= 8$$
parameters, while the choice of a cubic depends on
$$h^0(\cI_{C_1 \cup C_2}(3))-6=h^0(\cO_{\PP^4}(3))-h^0(\cO_{C_1 \cup C_2}(3))-6= 20$$
parameters, since we want the cubic not to be a multiple of the quadric. Summing up,
$$\dim(\cP_{18})=11+2+8+20=41,$$
as expected.

\begin{rmk} 
For the dimension count, it is important to notice that the general element in $\cP_{2k}$ is irreducible. This is easy if $k \ne 9$: the general quadric in $|\mathcal{I}_{C_1 \cup C_2}(2)|$ and the general cubic in $|\mathcal{I}_{C_1 \cup C_2}(3)|$ are smooth and do not contain a plane, so their intersection is irreducible. If $k=9$, all quadrics in $|\mathcal{I}_{C_1 \cup C_2}(2)|$ contain the plane $\Pi$ spanned by $C_1$ and $C_2$. However, the general such quadric is irreducible, so it suffices to show that the general cubic in $|\mathcal{I}_{C_1 \cup C_2}(3)|$ does not contain a plane. Clearly the general such cubic does not contain any plane other than $\Pi$. Moreover a straightforward computation shows that $|\mathcal{I}_{\Pi}(3)|$ has codimension $1$ in $|\mathcal{I}_{C_1 \cup C_2}(3)|$.
\end{rmk}

We now study the space $\cP_{2k}$ and prove its unirationality, implying in particular that $\cM_{2k}$ is unirational, too. Fix numbers $d_1,d_2,n$ as above. Up to automorphism of $\PP^4$ we can fix a rational normal curve $C_1$ of degree $d_1$. First we choose a set of points $P_1,\ldots,P_n\in C_1$, which will be the points of intersection of $C_1$ and $C_2$. Then we choose a linear subspace $\Pi_2\subseteq \PP^4$ of dimension $d_2 \ge n-1$ containing $P_1,\ldots,P_n$, and a rational normal curve $C_2 \subseteq \Pi_2$ of degree $d_2$, passing through $P_1,\ldots,P_n$. 

Using the notations of Lemma \ref{lemma:fine}, we consider the following incidence varieties:

$$\cP'_{2k}:=\{(P_1,\ldots,P_n,\Pi_2,\bm{A},Q, K) \in \mathrm{Sym}^n(C_1)\times \mathrm{Gr}(d_2,4)\times \widetilde{\mathcal{C}}_{d_2,n} \times |\cO_{\PP^4}(2)|\times |\cO_{\PP^4}(3)|:$$
$$ P_1,\ldots,P_n \in  C_2=\varphi_{\bm{A}}(\PP^1)\subseteq \Pi_2, \  Q \cap K \supseteq C_1 \cup C_2\},$$
$$\mathcal{Z}_1:=\{(P_1,\ldots,P_n,\Pi_2,\bm{A}) \in \mathrm{Sym}^n(C_1)\times \mathrm{Gr}(d_2,4)\times \widetilde{\mathcal{C}}_{d_2,n} : \Pi_2\ni P_1,\ldots,P_n\},$$
$$\mathcal{Z}_2:=\{(P_1,\ldots,P_n,\Pi_2) \in \mathrm{Sym}^n(C_1)\times \mathrm{Gr}(d_2,4): \Pi_2\ni P_1,\ldots,P_n\}.$$

They are related as shown in the diagram below:
$$\begin{tikzcd}
\cP'_{2k} \arrow[dashed]{d}{p_1} \arrow{r}{\psi_1} &\mathcal{Z}_1 \arrow{r}{\psi_2} &\mathcal{Z}_2 \arrow{r}{\psi_3} & \mathrm{Sym}^n(C_1)\\
\cP_{2k} \arrow[dashed]{d}{p_2}\\
\cM_{2k}
\end{tikzcd}$$
The three maps $\psi_1,\psi_2$ and $\psi_3$ are the obvious forgetful maps. The map $p_1$ sends $(P_1,\ldots,P_n$, $\Pi_2,\bm{A},Q,K)$ to $Q \cap K$, while $p_2$ sends a smooth complete intersection of a quadric and a cubic in $\PP^4$ to its isomorphism class.

In order to prove the unirationality of $\cM_{2k}$ we show that $\cP'_{2k}$ is rational. The variety $\mathrm{Sym}^n(C_1)\cong \PP^n$ is rational and $\mathcal{Z}_2$ is also rational, since it is a $\mathrm{Gr}(d_2-n, 4-n)$-bundle over $\mathrm{Sym}^n(C_1)$. Then $\mathcal{Z}_1$ is a projective bundle over $\mathcal{Z}_2$ with fiber isomorphic to $\widetilde{\mathcal{C}}_{d_2,n}$ in the proof of Lemma \ref{lemma:fine}. Finally $\mathcal{P}'_{2k}$ is a projective bundle over $\mathcal{Z}_1$ with fiber $\PP H^0(\cI_{C_1 \cup C_2}(2)) \times \PP H^0(\cI_{C_1 \cup C_2}(3))$. In conclusion, it follows that $\cP'_{2k}$ is rational. Therefore $\cM_{2k}$ is unirational, since $p_1$ and $p_2$ are dominant rational maps.

\subsection{Complete intersections of three quadrics in $\PP^5$: $k\in \{31,$ $34,$ $36,$ $39,$ $41,$ $43,$ $49,$ $59,$ $61,$ $64\}$}
We conclude with the case of polarizations of degree $8$. We consider the following isomorphisms of lattices:
$$\begin{pmatrix}
8 &3 &2\\
3 &-2 &1\\
2 &1 &-2
\end{pmatrix} \cong U \oplus \langle -62\rangle, \quad \begin{pmatrix}
8 &3 &3\\
3 &-2 &0\\
3 &0 &-2
\end{pmatrix} \cong U \oplus \langle -68\rangle, \quad \begin{pmatrix}
8 &3 &3\\
3 &-2 &2\\
3 &2 &-2
\end{pmatrix} \cong U \oplus \langle -72\rangle,$$
$$\begin{pmatrix}
8 &3 &3\\
3 &-2 &1\\
3 &1 &-2
\end{pmatrix} \cong U \oplus \langle -78\rangle, \quad \begin{pmatrix}
8 &4 &3\\
4 &-2 &3\\
3 &3 &-2
\end{pmatrix} \cong U \oplus \langle -82\rangle, \quad \begin{pmatrix}
8 &5 &1\\
5 &-2 &1\\
1 &1 &-2
\end{pmatrix} \cong U \oplus \langle -86\rangle,$$ 
$$\begin{pmatrix}
8 &4 &3\\
4 &-2 &2\\
3 &2 &-2
\end{pmatrix} \cong U \oplus \langle -98\rangle, \quad
\begin{pmatrix}
8 &5 &3\\
5 &-2 &3\\
3 &3 &-2
\end{pmatrix} \cong U \oplus \langle -118\rangle, \quad 
\begin{pmatrix}
8 &5 &3\\
5 &-2 &1\\
3 &1 &-2
\end{pmatrix} \cong U \oplus \langle -122\rangle,$$
$$\begin{pmatrix}
8 &5 &3\\
5 &-2 &2\\
3 &2 &-2
\end{pmatrix} \cong U \oplus \langle -128\rangle,$$

Let $X\subseteq \PP^5$ be a smooth complete intersection of three quadrics containg two rational normal curves $C_1,C_2$ of degree $d_1,d_2 \in \{1,2,3,4,5\}$ respectively, intersecting at $n \in \{0,1,2,3\}$ points. The numbers $d_1,d_2,n$ depend on $k$, as specified by the lattices above. Denote by $H$ the hyperplane class. Then the lattice $\langle H,C_1,C_2\rangle$ has intersection form of one of the types above, and embeds into $\NS(X)$. We claim that this embedding is always primitive. If $k\in \{31,34,39,41,43,59,61\}$, then the lattice $U\oplus \langle -2k \rangle$ admits no non-trivial overlattices by \cite[Proposition 1.4.1]{Nik79}, so the embedding is necessarily primitive. If instead $k\in \{36,49,64\}$, it is easy to see that the embedding is not primitive if and only if the class $C_1+C_2$ is divisible in $\NS(X)$. Since $C_1+C_2$ has square $0$ and is reduced and connected on $X$, it is primitive in $\NS(X)$, and hence the embedding is also primitive. This shows that $X$ is a $U\oplus \langle -2k \rangle$-polarized K3 surface.

Conversely, let $X$ be a K3 surface with $\NS(X)=\langle H,C_1,C_2\rangle$ and intersection matrix
$$\begin{pmatrix}
8 &d_1 &d_2\\
d_1 &-2 &n\\
d_2 &n &-2
\end{pmatrix}$$
belonging to the list above. The divisor $H$ is ample, and actually very ample by \cite[Theorem 5.2]{Sai74}, since there are no divisors $D\in \NS(X)$ of square $0$ and $DH=2$. Therefore the linear system $|H|$ induces an embedding $X\hookrightarrow \PP^5$ realizing $X$ as a smooth complete intersection of three quadrics containing two rational normal curves of degree $d_1,d_2$ respectively, intersecting at $n$ points. We denote by $\cP_{2k}$ the space of such complete intersections in $\PP^5$. The previous discussion shows that there exists a rational map $\cP_{2k}\dashrightarrow \cM_{2k}$, which is dominant as soon as $\cP_{2k}$ contains a smooth surface. In this case, the expected dimension $\dim(\cP_{2k})=\dim(\cM_{2k})+\dim(\mathrm{SL}(6, \CC))=52$.

In order to prove that $\cP_{2k}$ contains a smooth element, we can argue as in the case of polarizations of degree $4$ or $6$, using concrete computations or a dimension count. We leave the details to the reader.

\begin{rmk}
For the dimension count, it is important to notice that the general element in $\cP_{2k}$ is an irreducible surface. We can argue as follows. First we notice that $h^0(I_{C_1 \cup C_2}(2))\ge 3$ in all cases. Since the general quadric in $|\mathcal{I}_{C_1 \cup C_2}(2)|$ is irreducible and does not contain a $\PP^3$, the intersection of two such general quadrics is an irreducible threefold. Next, we have to show that the intersection of three general quadrics in $|\mathcal{I}_{C_1 \cup C_2}(2)|$ does not contain a plane, a quadric surface, or a cubic surface. This is clear if this surface $S$ contains neither $C_1$ nor $C_2$. If instead $S$ is a plane (or a quadric/cubic surface) containing $C_1$, a straightforward dimension count shows that the general quadric in $|\mathcal{I}_{C_1 \cup C_2}(2)|$ does not contain $S$.
\end{rmk}

We now study the space $\cP_{2k}$ and prove its unirationality, implying in particular that $\cM_{2k}$ is unirational, too. Fix numbers $d_1,d_2,n$ as above. Up to automorphism of $\PP^5$ we can fix a rational normal curve $C_1$ of degree $d_1$. First we choose a set of points $P_1,\ldots,P_n\in C_1$, which will be the points of intersection of $C_1$ and $C_2$. Then we choose a linear subspace $\Pi_2\subseteq \PP^5$ of dimension $d_2 \ge n-1$ containing $P_1,\ldots,P_n$, and a rational normal curve $C_2 \subseteq \Pi_2$ of degree $d_2$, passing through $P_1,\ldots,P_n$. 

Using the notations of Lemma \ref{lemma:fine}, we consider the following incidence varieties:

$$\cP'_{2k}:=\{(P_1,\ldots,P_n,\Pi_2,\bm{A},Q_1,Q_2,Q_3) \in \mathrm{Sym}^n(C_1)\times \mathrm{Gr}(d_2,5)\times \widetilde{\mathcal{C}}_{d_2,n} \times |\cO_{\PP^5}(2)|^3:$$
$$ P_1,\ldots,P_n \in  C_2=\varphi_{\bm{A}}(\PP^1)\subseteq \Pi_2, \  Q_1\cap Q_2 \cap Q_3 \supseteq C_1 \cup C_2\},$$
$$\mathcal{Z}_1:=\{(P_1,\ldots,P_n,\Pi_2,\bm{A}) \in \mathrm{Sym}^n(C_1)\times \mathrm{Gr}(d_2,5)\times \widetilde{\mathcal{C}}_{d_2,n} : \Pi_2\ni P_1,\ldots,P_n\},$$
$$\mathcal{Z}_2:=\{(P_1,\ldots,P_n,\Pi_2) \in \mathrm{Sym}^n(C_1)\times \mathrm{Gr}(d_2,5): \Pi_2\ni P_1,\ldots,P_n\}.$$

They are related as shown in the diagram below:
$$\begin{tikzcd}
\cP'_{2k} \arrow[dashed]{d}{p_1} \arrow{r}{\psi_1} &\mathcal{Z}_1 \arrow{r}{\psi_2} &\mathcal{Z}_2 \arrow{r}{\psi_3} & \mathrm{Sym}^n(C_1)\\
\cP_{2k} \arrow[dashed]{d}{p_2}\\
\cM_{2k}
\end{tikzcd}$$
The three maps $\psi_1,\psi_2$ and $\psi_3$ are the obvious forgetful maps. The map $p_1$ sends $(P_1,\ldots,P_n$, $\Pi_2,\bm{A},Q_1,Q_2,Q_3)$ to $Q_1 \cap Q_2 \cap Q_3$, while $p_2$ sends a smooth complete intersection of three quadrics in $\PP^5$ to its isomorphism class.

In order to prove the unirationality of $\cM_{2k}$ we show that $\cP'_{2k}$ is rational. The variety $\mathrm{Sym}^n(C_1)\cong \PP^n$ is rational and $\mathcal{Z}_2$ is also rational, since it is a $\mathrm{Gr}(d_2-n, 5-n)$-bundle over $\mathrm{Sym}^n(C_1)$. Then $\mathcal{Z}_1$ is a projective bundle over $\mathcal{Z}_2$ with fiber isomorphic to $\widetilde{\mathcal{C}}_{d_2,n}$ in the proof of Lemma \ref{lemma:fine}. Finally $\mathcal{P}'_{2k}$ is a projective bundle over $\mathcal{Z}_1$ with fiber $\PP H^0(\cI_{C_1 \cup C_2}(2))^3$. In conclusion, it follows that $\cP'_{2k}$ is rational. Therefore $\cM_{2k}$ is unirational, since $p_1$ and $p_2$ are dominant rational maps.

\bibliographystyle{alpha}
\bibliography{References}

\end{document}